\documentclass[a4paper,twoside,11pt]{article}

\usepackage{BeaCed_article}

\setlength{\parindent}{0pt}
\setlength{\parskip}{1ex plus 0.5ex minus 0.2ex}

\begin{document}

\title{The critical $Z$-invariant Ising model via dimers:\\
locality property}
\author{C\'edric Boutillier
\thanks{
{\small Laboratoire de Probabilit\'es et Mod\`eles Al\'eatoires, Universit\'e Paris VI Pierre et Marie Curie, Case courrier 188, 4 place Jussieu, F-75252 Paris CEDEX 05.} \small\texttt{cedric.boutillier@upmc.fr}. Supported in part by the Swiss National Foundation Grant 200020-120218/1.
}
, B\'eatrice de Tili\`ere
\thanks{
{\small Institut de Math\'ematiques, Universit\'e de Neuch\^atel, Rue Emile-Argand 11, CH-2007 Neuch\^atel.}
{\small\texttt{beatrice.detiliere@unine.ch}}.
{\small Supported in part by the Swiss National Foundations grants 47102009 and 200020-120218/1.}
} }
 
\date{}
\maketitle
\vspace{-1cm}
\begin{abstract}
We study a large class of critical two-dimensional Ising models, namely {\em
critical $Z$-invariant Ising models}. Fisher \cite{Fisher}
introduced a correspondence between the Ising model and the dimer
model on a decorated graph, thus setting dimer techniques as a
powerful tool for understanding the Ising model. In this paper, we give a full description of the dimer model corresponding to the critical $Z$-invariant Ising model, consisting of explicit expressions which only depend on the {\em local geometry} of the underlying isoradial graph. Our main result is an explicit local formula for the inverse Kasteleyn matrix, in the spirit of \cite{Kenyon3}, as a contour integral of the discrete exponential function of \cite{Mercat2,Kenyon3} multiplied by a local function. Using results of \cite{isoising1} and techniques of \cite{Bea1,Kenyon3}, this yields an explicit local formula for a natural Gibbs measure, and a local formula for the free energy. As a corollary, we recover Baxter's formula for the free energy of the critical $Z$-invariant Ising model \cite{Baxter}, and thus a new proof of it. The latter is equal, up to a constant, to the logarithm of the normalized determinant of the Laplacian obtained in \cite{Kenyon3}.
\end{abstract}

\section{Introduction}

In \cite{Fisher}, Fisher introduced a correspondence between the two-dimensional Ising model defined on a graph $G$, and the dimer model defined on a decorated version of this graph. Since then, dimer techniques have been a powerful tool for solving pertinent questions about the Ising model, see for example the paper of Kasteleyn \cite{Kasteleyn}, and the book of Mc Coy and Wu \cite{McCoyWu}. In this paper, we follow this approach to the Ising model.

We consider a large class of critical Ising models, known as {\em critical $Z$-invariant Ising models}, introduced in \cite{BaxterZ}. More precisely, we consider Ising models defined on graphs which have the property of having an {\em isoradial embedding}. We suppose that the Ising coupling constants naturally depend on the geometry of the embedded graph, and are such that the model is invariant under {\em star-triangle} transformations of the underlying graph, {\em i.e.} such that the Ising model is {\em $Z$-invariant}. We suppose moreover that the coupling constants are {\em critical} by imposing a {\em generalized self-duality property}. The standard Ising model on the square, triangular and honeycomb lattice at the critical temperature are examples of critical $Z$-invariant Ising models. In the mathematics literature, this model has been studied by Mercat \cite{Mercat}, where the author proves equivalence between existence of Dirac spinors and criticality; by Chelkak and Smirnov \cite{ChelkakSmirnov}, where the authors prove conformal invariance in the scaling limit; in \cite{isoising1}, where we give 
a complete description of the equivalent dimer model, in the case where the underlying graph is periodic. The critical $Z$-invariant Ising model has also been widely studied in the physics literature, see for example \cite{BaxterZ,Perk1,Perk2,Perk3,Martinez1,Martinez2,CostaSantos}.

Let $G=(V(G),E(G))$ be an infinite, locally finite, isoradial graph with critical coupling constants on the edges. Then by Fisher, the Ising model on $G$ is in correspondence with the dimer model on a decorated graph $\GD$, with a well chosen positive weight function $\nu$ on the edges. We refer to this model as the {\em critical dimer model on the Fisher graph $\GD$ of $G$}. It is defined as follows. A {\em dimer configuration} of $\GD$ is a subset of edges of $\GD$ such that every vertex is incident to exactly one edge of $M$. Let $\M(\GD)$ be the set of dimer configurations of $\GD$. Dimer configurations of $\GD$ are chosen according to a probability measure, known as {\em Gibbs measure}, satisfying the following properties. If one fixes a perfect matching in an annular region of $\GD$, then perfect matchings inside and outside of this annulus are independent. Moreover, the probability of occurrence of an interior matching is proportional to the product of its edges weights given by the weight function $\nu$.

The key objects used to obtain explicit expressions for relevant quantities of the dimer model are the {\em Kasteleyn matrix}, denoted by $K$, and its inverse. A Kasteleyn matrix is an oriented adjacency matrix of the graph $\GD$, whose coefficients are weighted by the function $\nu$, introduced by Kasteleyn \cite{Kast61}. Our main result is Theorem \ref{thm:inverse_intro}, consisting of an explicit {\em local} expression for an inverse $K^{-1}$ of the Kasteleyn matrix $K$. It can loosely be stated as follows, refer to Section \ref{subsc42} for detailed definitions and to Theorem \ref{inverse} for a precise statement.

\begin{thm}\label{thm:inverse_intro}
Let $x,y$ be two vertices of $\GD$. Then the infinite matrix $K^{-1}$, whose coefficient $K^{-1}_{x,y}$ is given below, is an inverse Kasteleyn matrix.
\begin{equation*}
K^{-1}_{x,y}=
\frac{1}{(2\pi)^2}
\oint_{\C_{x,y}}f_{x}(\lambda)f_{y}(-\lambda)
\expo_{\xb,\yb}(\lambda)\log\lambda \ud\lambda+C_{x,y},
\end{equation*}
where $f_x$ is a complex-valued function depending on the vertex $x$ only; $\expo_{\xb,\yb}$ is the discrete exponential function introduced in \cite{Mercat2}, see also \cite{Kenyon3}; $C_{x,y}$ is a constant equal to $\pm\frac{1}{4}$ when $x$ and $y$ are close, and $0$ else; $\C_{x,y}$ is a simple closed curve oriented counterclockwise containing all
poles of the integrand, and avoiding a half-line $d_{x,y}$ starting from zero.
\end{thm}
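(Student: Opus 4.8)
The plan is to verify directly that the matrix $K^{-1}$ given by the formula is a right inverse of the Kasteleyn matrix $K$ (and, by the transposed computation, a left inverse). Standard bounds on the discrete exponential function $\expo_{\xb,\yb}$ from \cite{Mercat2,Kenyon3} show that the contour integral converges and that $K^{-1}_{x,y}$ decays as $|x-y|\to\infty$, so that the infinite product $KK^{-1}$ is well defined. Since $K$ has finitely many nonzero entries per row --- indexed by the neighbours of $x$ in the Fisher graph $\GD$ --- the statement $KK^{-1}=\mathrm{Id}$ reduces to the local identity
\begin{equation*}
\sum_{z\sim x} K_{x,z}\,K^{-1}_{z,y}=\delta_{x,y}
\end{equation*}
for every pair of vertices $x,y$ of $\GD$.

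First I would substitute the integral formula into the left-hand side and exchange the finite sum with the contour integral; modulo the constants $C_{z,y}$, which are handled separately, and after deforming the curves $\C_{z,y}$ to a common curve $\C$ enclosing all the relevant poles (no pole being crossed in the process), this yields a single contour integral whose integrand is $f_y(-\lambda)\log\lambda$ multiplied by the local sum $S_x(\lambda)=\sum_{z\sim x} K_{x,z}\,f_z(\lambda)\,\expo(\lambda)$, where $\expo(\lambda)$ here denotes the discrete exponential between $z$ and $y$. The heart of the argument --- and the step I expect to be the main obstacle --- is to show that $S_x(\lambda)$ \emph{collapses} to an elementary rational function of $\lambda$ supported near $y$. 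This rests on two local relations: the discrete holomorphicity (Cauchy--Riemann type) relation satisfied by the discrete exponential around each rhombus of the isoradial graph, and the companion linear relation satisfied by the functions $f_z$, which are designed precisely so that the Kasteleyn weights $\nu$ --- built from the critical coupling constants, hence from the rhombus half-angles --- conspire with the discrete exponential to telescope. Establishing this requires a careful case analysis over the types of vertices of $\GD$, namely those inherited from $G$ and the decoration vertices produced by Fisher's correspondence, with meticulous bookkeeping of the half-angles attached to the incident rhombi; I would organize it by drawing the local patch of $\GD$ around $x$ in each case and checking the identity rhombus by rhombus, in the spirit of \cite{Bea1,Kenyon3}.

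Once the collapse is in place, two regimes remain. When $x$ and $y$ are far apart, the collapsed integrand is holomorphic inside $\C_{x,y}$, or its residues cancel in pairs, so the contour integral vanishes; since $C_{x,y}=0$ here, the local identity reads $0=\delta_{x,y}$, as needed. When $x=y$, or when $x$ and $y$ lie in the same Fisher decoration, the collapsed rational function carries a simple pole --- typically at the value of $\lambda$ at which the pairing $f_x(\lambda)f_y(-\lambda)$ degenerates --- and one must evaluate the corresponding residue. Here the factor $\log\lambda$ genuinely intervenes: deforming $\C_{x,y}$ onto the branch cut $d_{x,y}$ splits the integral into a residue contribution plus the integral of the $2\pi i$ jump of $\log\lambda$ across $d_{x,y}$, and a direct evaluation of these two pieces, combined with the explicit value $C_{x,y}=\pm\frac14$, yields exactly $\delta_{x,y}$, using results of \cite{isoising1} to fix signs and normalizations. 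Finally, $K^{-1}K=\mathrm{Id}$ follows either from the near-antisymmetry of $K$ after the same computation read from the other side, or by transposing the argument verbatim.
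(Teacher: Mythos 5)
Your overall architecture matches the paper's: verify $KK^{-1}=\mathrm{Id}$ entry by entry using the fact that $K$ has only three nonzero entries per row, exploit a linear relation showing that the integrand is annihilated by $K$, deform contours, and absorb the leftover discrepancies into the constants $C_{x,y}$. However, the central step is misdescribed in a way that would derail the proof if you tried to carry it out.

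You write that the heart of the matter is to show that $S_x(\lambda)=\sum_{z\sim x}K_{x,z}f_z(\lambda)\expo_{\zb,\yb}(\lambda)$ \emph{collapses to a rational function supported near $y$}, and that a residue of this collapsed function produces the $\delta_{x,y}$. This is not what happens. The paper's Proposition~\ref{prop:kernelK} shows that $S_x(\lambda)$ vanishes \emph{identically} for every $\lambda\in\CC$: the functions $f_x(\cdot)\expo_{\xb,\yb}(\cdot)$ are exactly in the kernel of $K$, by design. There is no residual rational function, no pole ``near $y$'' and no residue to evaluate. Consequently, if the three contours $\C_{x_1,y},\C_{x_2,y},\C_{x_3,y}$ could always be brought to a common contour $\C$ without crossing poles, the integral part of $(KK^{-1})_{x,y}$ would vanish for \emph{all} pairs $(x,y)$, including $x=y$, and the formula could not be an inverse. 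The delta function must therefore come from somewhere else, and that ``somewhere else'' is precisely the step your proposal waves away: the three integrals have genuinely different contours because the set of poles of the integrand depends on $x_i$, and when $x$ and $y$ lie in the same triangle of a decoration the three contours \emph{cannot} be deformed to a common one without crossing poles. All of Section~\ref{subsec73} (train-tracks, minimal paths, the convex-polygon construction, the angular sectors $s_{x,y}$) exists to make this deformation argument precise and to identify exactly which pairs $(x,y)$ obstruct it. When the deformation fails, the paper does not compute a residue of a ``collapsed'' function; it computes explicitly the difference between pairs of integrals of the \emph{same} integrand over \emph{different} contours (Proposition~\ref{prop:casered} and Proposition~\ref{prop:KI}), and these differences are the nontrivial $\pm\tfrac12$ and $1$ values that, together with $KC$ from Lemma~\ref{lem:C}, assemble into $\delta_{x,y}$.

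So the gap is twofold: (i) your description of the telescoping as yielding a nonzero local rational function contradicts the exact kernel identity that the functions $f_x\expo$ are built to satisfy; and (ii) you treat the contour deformation as unproblematic (``no pole being crossed in the process''), whereas controlling this deformation is, in the authors' own words, ``the great difficulty'' and occupies most of the proof. You do correctly anticipate that $\log\lambda$ becomes essential in the diagonal computations and that the constants $C_{x,y}$ balance the books, but the mechanism generating the delta is the geometric obstruction to moving contours, not a residue of a rational remainder.
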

Before stating implications of Theorem \ref{thm:inverse_intro} on the critical dimer model on the Fisher graph $\GD$, let us make a few comments.
\begin{enumerate}
\item Theorem \ref{thm:inverse_intro} is in the spirit of the work of Kenyon \cite{Kenyon3}, where the author obtains explicit local expressions for the critical Green's function defined on isoradial graphs, and for the inverse Kasteleyn matrix of the critical dimer model defined on bipartite isoradial graphs. Surprisingly both, the expression for the Green's function of \cite{Kenyon3} and our expression for the inverse Kasteleyn matrix involve the discrete exponential function. This relation is pushed even further in Corollary \ref{cor:cst}, see the comment after the statement of Theorem \ref{thm:baxter_intro}. Note that the expression for the inverse Kasteleyn matrix obtained in \cite{Kenyon3} does not hold in our setting, since the dimer model we consider is defined on the Fisher graph $\GD$ of $G$, which is {\em not} bipartite and not {\em isoradial}.
\item Theorem \ref{thm:inverse_intro} should also be compared with the explicit expression for the inverse Kasteleyn matrix obtained in \cite{isoising1}, in the case where the graph $\GD$ is periodic. Then, by the uniqueness statement of Proposition $5$ of \cite{isoising1}, the two expressions coincide. The proofs are nevertheless totally different in spirit, and
we have not yet been able to understand the identity by an explicit computation, except in the case where $G=\ZZ^2$.
\item The most interesting features of Theorem \ref{thm:inverse_intro} are the following. First, there is no periodicity assumption on the graph $\GD$. Secondly, the expression for $K^{-1}_{x,y}$ is {\em local}, meaning that it only depends on the local geometry of the underlying isoradial graph $G$. More precisely, it only depends on an edge-path of $G$ between vertices $\hat{\xb}$ and $\hat{\yb}$ of $G$, naturally constructed from $x$ and $y$. This implies that changing the isoradial graph $G$ away from this path does not change the expression for $K^{-1}_{x,y}$. Thirdly, explicit computations of $K^{-1}_{x,y}$ become tractable, whereas, even in the periodic case, they remain very difficult with the explicit expression for $K^{-1}_{x,y}$ given in \cite{isoising1}.
\item The structure of the proof of Theorem \ref{thm:inverse_intro} is taken from \cite{Kenyon3}. The idea is to find complex-valued functions that are in the kernel of the Kasteleyn matrix $K$, then to define $K^{-1}$ as a contour integral of these functions, and to define the contours of integration in such a way that $KK^{-1}=\Id$. The great difficulty lies in actually finding the functions that are in the kernel of $K$, since there is no general method to construct them; and in defining the contours of integration. Indeed, the Fisher graph $\GD$ is obtained from an isoradial graph $G$, but has a more complicated structure, so that the geometric argument of \cite{Kenyon3} does not work. The proof of Theorem \ref{thm:inverse_intro} being long, it is postponed until the last section of this paper.
\end{enumerate}

Using Theorem \ref{thm:inverse_intro}, an argument similar to \cite{Bea1}, and the results obtained in \cite{isoising1}, yields Theorem \ref{thm:Gibbs_intro} giving an explicit expression for a Gibbs measure on $\M(\GD)$. When the graph $\GD$ is periodic, this measure coincides with the Gibbs measure obtained in \cite{isoising1} as weak limit of Boltzmann measures on a natural toroidal exhaustion of the graph $\GD$. A precise statement is given in Theorem \ref{thm:measure} of Section \ref{sec:measure}.
\begin{thm}\label{thm:Gibbs_intro} 
There is a unique Gibbs measure $\P$ defined on $\M(\GD)$, such that the probability of occurrence of a subset of edges $\{e_1=x_1 y_1,\cdots,x_k y_k\}$ in a dimer configuration of $\GD$, chosen with respect to the Gibbs measure $\P$ is:
\begin{equation*}
\P(e_1,\cdots,e_k)=\left(\prod_{i=1}^k K_{x_i,y_i}\right)\Pf\left(^t (K^{-1})_{\{x_1,y_1,\cdots,x_k,y_k\}}\right), 
\end{equation*}
where $K^{-1}$ is given by Theorem \ref{thm:inverse_intro}, and $(K^{-1})_{\{x_1,y_1,\cdots,x_k,y_k\}}$ is the sub-matrix of $K^{-1}$, whose lines and columns are indexed by vertices $\{x_1,y_1,\cdots,x_k,y_k\}$.
\end{thm}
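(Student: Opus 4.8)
The plan is to follow \cite{Bea1}: I would realize $\P$ as the weak limit of Boltzmann measures on a well-chosen finite exhaustion of $\GD$, which at once produces a genuine probability measure, and then check that it has the announced Pfaffian form, that it satisfies the Gibbs property, and that it is the only such measure. Concretely, I would fix an exhaustion $\GD=\bigcup_n\GD_n$ by finite subgraphs, chosen — using the specific combinatorial structure of the Fisher graph of an isoradial graph — so that every $\GD_n$ is matchable and carries a Kasteleyn orientation compatible with the one on $\GD$. Writing $\P_n$ for the Boltzmann measure on $\M(\GD_n)$ with weights $\nu$ and $K_n$ for the corresponding finite Kasteleyn matrix, Kasteleyn's theorem \cite{Kast61} together with the Pfaffian formula for dimer correlations (see \cite{Kenyon3}) gives, for any finite edge set $\{e_1=x_1y_1,\dots,e_k=x_ky_k\}$ inside $\GD_n$,
\[
\P_n(e_1,\dots,e_k)=\Big(\prod_{i=1}^k (K_n)_{x_i,y_i}\Big)\,\Pf\big({}^t(K_n^{-1})_{\{x_1,y_1,\dots,x_k,y_k\}}\big).
\]
The crucial step is then to prove that $(K_n^{-1})_{x,y}\to K^{-1}_{x,y}$ as $n\to\infty$, where $K^{-1}$ is the matrix of Theorem \ref{thm:inverse_intro}. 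Here the \emph{locality} of that explicit formula is exactly what makes the argument work: the difference between $(K_n^{-1})$ and the restriction of $K^{-1}$ to $\GD_n$ is annihilated by $K_n$ away from the boundary of $\GD_n$, so a discrete maximum-principle argument, combined with the decay of the entries of $K^{-1}$ extracted from the contour integral of Theorem \ref{thm:inverse_intro}, forces it to vanish in the bulk; in the periodic case this convergence is already contained in \cite{isoising1}, once the two expressions for $K^{-1}$ are identified via the uniqueness statement of Proposition~5 of \cite{isoising1}.

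Passing to the limit in the displayed identity, the measures $\P_n$ converge weakly to a measure $\P$ on $\M(\GD)$ given by the claimed Pfaffian formula. Kolmogorov consistency of this formula is the purely algebraic identity obtained by expanding the Pfaffian according to whether a prescribed edge is used, and reduces to $KK^{-1}=\Id$; non-negativity is automatic since $\P$ is a weak limit of probability measures, so $\P$ is a bona fide probability measure on $\M(\GD)$. The Gibbs property stated in the introduction is then read directly off the Pfaffian formula — conditionally on a matching of an annulus, the law inside involves only the submatrix of $K^{-1}$ indexed by interior vertices and coincides with the associated Boltzmann measure — or, alternatively, is inherited from the Gibbs property of the $\P_n$ by weak convergence.

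For uniqueness, I would take an arbitrary Gibbs measure $\mu$ on $\M(\GD)$ in the above sense. Given a finite edge set $E_0$, the Gibbs property writes $\mu$, restricted to configurations on $E_0$, as a mixture over the possible matchings of the boundary of a large region $R\supset E_0$ of Boltzmann measures on $R$ with those boundary conditions, each of which is again of Pfaffian form governed by the inverse Kasteleyn matrix of $R$ with the corresponding boundary condition. Running the same convergence-and-locality argument — now uniformly over boundary conditions, since changing a boundary condition amounts to modifying the graph far from $E_0$ — shows that as $R\uparrow\GD$ these converge to the same Pfaffian formula, independently of the boundary data. Hence $\mu$ agrees with $\P$ on cylinder events, so $\mu=\P$.

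The main obstacle is precisely this convergence $(K_n^{-1})\to K^{-1}$, together with its uniform-in-boundary-condition version needed for uniqueness; the algebraic and measure-theoretic steps are routine once it is established. Making it precise requires constructing exhaustions of the non-bipartite, non-isoradial Fisher graph $\GD$ that are matchable and carry compatible Kasteleyn orientations, and then controlling the boundary correction quantitatively — via the decay of $K^{-1}$ obtained from Theorem \ref{thm:inverse_intro} and a maximum principle for functions killed by $K$ in the bulk. This is where \cite{isoising1} (in the periodic case) and, in general, the locality of Theorem \ref{thm:inverse_intro} carry out the essential work.
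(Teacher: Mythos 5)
Your plan takes a genuinely different route from the paper, and the central step on which it rests is not established. The paper does \emph{not} try to prove convergence of finite inverse Kasteleyn matrices $(K_n^{-1})\to K^{-1}$ along a planar exhaustion. Instead it applies Kolmogorov's extension theorem directly to the family of Pfaffian functionals $\P_{\mathcal{E}}$, and the only analytic input needed is that each $\P_{\mathcal{E}}$ is a genuine probability measure. This is proved by a local \emph{periodization} trick: given a finite edge set $\mathcal{E}$, one uses Proposition~1 of \cite{Bea1} to embed the relevant rhombi in a $\ZZ^2$-periodic rhombic tiling, hence in a periodic isoradial graph $G^p$ whose Fisher graph $\GD^p$ agrees with $\GD$ on a large ball $B$. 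By the locality of the explicit formula (Remark~\ref{rem:localformula}), $K^{-1}$ and $(K^p)^{-1}$ coincide on $B$; by Corollary~\ref{cor:asymptotics} and Proposition~5 of \cite{isoising1}, $(K^p)^{-1}$ is the periodic inverse of \cite{isoising1}, whose Pfaffians are known (Theorem~6 of \cite{isoising1}) to be the cylinder probabilities of an actual Gibbs measure $\P^p$. So $\P_{\mathcal{E}}$ inherits positivity and normalization from $\P^p$. Consistency and the Gibbs property also come along for free, being statements about $\P^p$ for a suitable fundamental domain. No discrete maximum principle and no direct finite-volume limit on $\GD$ itself are used.

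The concrete gaps in your argument are: (i) there is no established maximum principle for the Kasteleyn operator $K$ on the Fisher graph — $K$ is antisymmetric and indefinite, and the Fisher graph is neither bipartite nor isoradial, so the tools that give such a principle for the Laplacian (or for bipartite isoradial dimers as in \cite{Kenyon3}) are unavailable; claiming that ``$K_n$ annihilates the difference in the bulk, hence it vanishes'' is not justified. (ii) Even if you had such a principle, the decay $K^{-1}_{x,y}=O(1/|\xb-\yb|)$ from Corollary~\ref{cor:asymptotics} is too slow to control boundary errors summed over a boundary of size $O(n)$ without a more quantitative argument. (iii) Constructing a planar finite exhaustion $\GD_n$ of the Fisher graph that is matchable and carries Kasteleyn orientations compatible with the infinite one is itself nontrivial and not done. (iv) Your ``uniqueness'' step proves too much: the theorem only asserts uniqueness of a probability measure with the \emph{given} Pfaffian cylinder probabilities, which is automatic from the Carath\'eodory/Kolmogorov construction; you are instead attempting to show uniqueness of Gibbs measures outright, a stronger and unproved statement that the paper does not make.
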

Theorem \ref{thm:Gibbs_intro} is a result about Gibbs measures with no periodicity assumption on the underlying graph. There are only very few examples of such instances in statistical mechanics. Moreover, the Gibbs measure $\P$ inherits the properties of the inverse Kasteleyn matrix $K^{-1}$ of Theorem \ref{thm:inverse_intro}: it is local, and allows for explicit computations, examples of which are given in Appendix \ref{app:calculs}.

Let us now assume that the graph $G$ is $\ZZ^2$-periodic. Using Theorem \ref{thm:inverse_intro}, and the techniques of \cite{Kenyon3}, we obtain an explicit expression for the free energy of the critical dimer model on the graph $\GD$, see Theorem \ref{thm:free_energy}, depending only on the angles of the fundamental domain. Using Fisher's correspondence between the Ising and dimer models, this yields a new proof of Baxter's formula for the free energy of the critical $Z$-invariant Ising model, denoted by $f_I$, see Section \ref{sec:free_energy} for definitions. 
\begin{thm}[\cite{Baxter}]\label{thm:baxter_intro}
\begin{equation*}
f_I=-|V(G_1)|\frac{\log 2}{2} -\sum_{e\in E(G_1)}\left[\frac{\theta_e}{\pi}\log\tan\theta_e+\frac{1}{\pi}\left(L(\theta_e)+L\left(\frac{\pi}{2}-\theta_e\right)\right)
\right].
\end{equation*}
\end{thm}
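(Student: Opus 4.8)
The plan is to follow the strategy of \cite{Kenyon3} for evaluating a determinant: rather than computing head-on the double integral that gives the free energy of the critical dimer model on $\GD$, differentiate it with respect to the underlying geometry, use the local expression of Theorem~\ref{thm:inverse_intro} to put the derivative in closed form, then integrate back and fix the constant. Concretely, applying Fisher's correspondence on a toroidal exhaustion of $G$, taking logarithms, dividing by the number of fundamental domains and letting the exhaustion grow expresses $f_I$ as the free energy per fundamental domain of the critical dimer model on $\GD$, plus an explicit additive term: a sum over $E(G_1)$ of a function of the single angle $\theta_e$ (the contribution of the $\prod_e\cosh J_e$ factor), together with a term proportional to $|V(G_1)|$ (the contribution of the $2^{|V|}$ factor). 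Since $\GD$ is $\ZZ^2$-periodic, the dimer free energy per fundamental domain is, up to sign and an overall constant, $\iint_{|z|=|w|=1}\log\bigl|\det K(z,w)\bigr|\,\frac{\ud z}{iz}\,\frac{\ud w}{iw}$, where $K(z,w)$ is the Fourier transform of the Kasteleyn matrix $K$ of Theorem~\ref{thm:inverse_intro} (the factor $\tfrac{1}{2}$ being due to $\det K=(\Pf K)^2$ for the antisymmetric Kasteleyn matrix of the non-bipartite graph $\GD$); this identification is the content of Theorem~\ref{thm:free_energy}.

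Next, fix an edge $e\in E(G_1)$ with half-angle $\theta_e\in(0,\tfrac{\pi}{2})$. Only finitely many entries of $K(z,w)$ depend on $\theta_e$, namely those supported on the Fisher decoration over $e$ and over its two endpoints; hence $\partial_{\theta_e}\log\det K(z,w)=\mathrm{tr}\bigl(K(z,w)^{-1}\,\partial_{\theta_e}K(z,w)\bigr)$ is a finite sum, and integrating over the torus replaces $K(z,w)^{-1}$ by the corresponding entries $K^{-1}_{x,y}$ of the infinite inverse of Theorem~\ref{thm:inverse_intro}, between vertices $x,y$ of $\GD$ lying at bounded graph distance. For such nearby pairs the edge-path of $G$ from $\hat{\xb}$ to $\hat{\yb}$ is short, so $\expo_{\xb,\yb}(\lambda)$ is a rational function of $\lambda$ of low degree, the contour integral defining $K^{-1}_{x,y}$ is computed by residues, and one obtains an elementary trigonometric function of the angles along that path. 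Adding the $\theta_e$-derivative of the additive term from the previous step and simplifying --- during which the dependence on the angles of the rhombi adjacent to $e$ is expected to cancel --- should yield
\begin{equation*}
\frac{\partial f_I}{\partial\theta_e}=-\frac{2\theta_e}{\pi\sin 2\theta_e}.
\end{equation*}

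Using $L'(\theta)=-\log|2\sin\theta|$ one checks that $-\tfrac{2\theta_e}{\pi\sin 2\theta_e}$ is precisely the $\theta_e$-derivative of the $e$-th summand in the statement: differentiating $\tfrac{\theta_e}{\pi}\log\tan\theta_e$ gives $\tfrac{1}{\pi}\log\tan\theta_e+\tfrac{2\theta_e}{\pi\sin 2\theta_e}$, differentiating $\tfrac{1}{\pi}\bigl(L(\theta_e)+L(\tfrac{\pi}{2}-\theta_e)\bigr)$ gives $-\tfrac{1}{\pi}\log\tan\theta_e$, and the two $\log\tan$ terms cancel. Since $\partial_{\theta_e}f_I$ depends on $\theta_e$ alone, the difference between $f_I$ and $-\sum_{e\in E(G_1)}\bigl[\tfrac{\theta_e}{\pi}\log\tan\theta_e+\tfrac{1}{\pi}\bigl(L(\theta_e)+L(\tfrac{\pi}{2}-\theta_e)\bigr)\bigr]$ is unchanged as the angles $\theta_e$ are varied within the space of isoradial embeddings of $G$, hence reduces to a constant depending only on the combinatorics of $G_1$; by the reduction above it is proportional to $|V(G_1)|$, and it equals $-|V(G_1)|\tfrac{\log 2}{2}$. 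The latter value is obtained by specializing to $G=\ZZ^2$, where all $\theta_e=\tfrac{\pi}{4}$, the integral is classical, $L(\tfrac{\pi}{4})$ is half of Catalan's constant, and Onsager's free energy is recovered; equivalently, it follows from the comparison with the normalized determinant of the Laplacian of \cite{Kenyon3} established in Corollary~\ref{cor:cst}.

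The main obstacle is the explicit evaluation in the second step. It requires the precise combinatorial description of the Fisher graph $\GD$ near an edge, of the functions $f_x$ entering Theorem~\ref{thm:inverse_intro}, and of the weight function $\nu$ as a function of $\theta_e$; and one must verify that the contributions of the neighbouring rhombi --- present a priori in the local formula for the near-diagonal entries of $K^{-1}$ --- cancel, so that the outcome is a genuine sum of single-edge terms. A secondary, more technical point is to justify differentiating under the integral sign near the locus where $\det K(z,w)$ vanishes on the unit torus, which at criticality is nonempty and where $\log|\det K|$ carries an integrable logarithmic singularity.
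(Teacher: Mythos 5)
Your proposal follows the same overall route as the paper: relate $f_I$ to the critical dimer free energy $f_D$ via Fisher's correspondence (Equation~\eqref{eq:Z-fisher} gives $f_I = f_D - \sum_{e\in E(G_1)}\log\sinh J(\theta_e)$); express $f_D$ as $-\frac12\iint_{\TT^2}\log\det\hat K$; differentiate along a deformation of the isoradial embedding using $\partial\log\det K = \operatorname{tr}(K^{-1}\partial K)$ together with the local formula for $K^{-1}$ and near-diagonal residue computations; recognise the result as a sum of single-variable functions of the $\theta_e$; and integrate back. Your per-edge derivative $-\frac{2\theta_e}{\pi\sin 2\theta_e}$ of $f_I$ is correct, and agrees with the paper's $\frac{\ud\mathsf f}{\ud\theta}$ once one subtracts $\frac{\ud}{\ud\theta}\log\sinh J(\theta)$.

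The genuine gap is in fixing the constant of integration. You assert that the constant ``reduces to a constant depending only on the combinatorics of $G_1$; by the reduction above it is proportional to $|V(G_1)|$.'' Nothing in your argument supports the proportionality to $|V(G_1)|$: the variational step only shows the constant is unchanged when the rhombus angles move, so a priori it could depend on $|E(G_1)|$ or on the face count as well. Specializing to $\ZZ^2$ and quoting Onsager supplies a single linear relation and cannot separate a hypothetical dependence $a|V_1|+b|E_1|$. Your fallback --- invoking Corollary~\ref{cor:cst} --- is circular, since that corollary is itself deduced from Theorem~\ref{thm:free_energy}, which is precisely what must be established. The paper closes the gap by degenerating all rhombi to flat ones ($\theta_e\to 0$ or $\frac{\pi}{2}$); because $f_D$ diverges there, it passes to the entropy $s_D=-f_D-\sum_e\P(e)\log\nu_e$, which remains finite, and computes $s_D(\GD^{\text{flat}})=|V_1|\log 2$ by a direct configuration count (long edges frozen, each decoration contributing two states, the short-edge cycles giving only $o(n^2)$). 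That explicit count is what produces the coefficient $(|E_1|+|V_1|)\frac{\log 2}{2}$ in Theorem~\ref{thm:free_energy}, and the $|E_1|$-part cancels only after subtracting $\sum_e\log\sinh J(\theta_e)=\sum_e\frac12\log\bigl(\frac{\tan\frac{\theta_e}{2}\tan\theta_e}{2}\bigr)$, leaving $|V_1|\frac{\log 2}{2}$ in $f_I$. Without an analogous degeneration or counting argument, your constant is undetermined. Two smaller points: the identity $f_D=-\frac12\iint_{\TT^2}\log\det\hat K$ is a result of \cite{isoising1} invoked inside the proof, not the statement of Theorem~\ref{thm:free_energy}; and the factor $2^{|V|}$ from the high-temperature expansion cancels in Fisher's correspondence, so the $|V_1|$-term in $f_I$ does not originate there but in the constant of integration for $f_D$.
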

Note that the free energy $f_I$ of the critical $Z$-invariant Ising model is, up to a multiplicative constant $-\frac{1}{2}$ and an additive constant, the logarithm of the normalized determinant of the Laplacian obtained by Kenyon \cite{Kenyon3}. See also Corollary \ref{cor:cst}, where we explicitly determine the constant of proportionality relating the characteristic polynomials of the critical Laplacian and of the critical dimer model on the Fisher graph $\GD$, whose existence had been established in \cite{isoising1}.

{\bf Outline of the paper}
\begin{enumerate}
\item[Section $2$:] Definition of the critical $Z$-invariant Ising model.
\item[Section $3$:] Fisher's correspondence between the Ising and dimer models.
\item[Section $4$:] Definition of the Kasteleyn matrix $K$. Statement of Theorem \ref{inverse}, giving an explicit local expression for the coefficient $K^{-1}_{x,y}$ of the inverse Kasteleyn matrix. Asymptotic expansion of $K^{-1}_{x,y}$, as $|x-y|\rightarrow\infty$, using techniques of \cite{Kenyon3}.
\item[Section $5$:] Implications of Theorem \ref{inverse} on the critical dimer model on the Fisher graph $\GD$: Theorem \ref{thm:measure} gives an explicit local expression for a natural Gibbs measure, and Theorem \ref{thm:free_energy} gives an explicit local expression for the free energy. Corollary: Baxter's formula for the free energy of the critical $Z$-invariant Ising model. 
\item[Section $6$:] Proof of Theorem \ref{inverse}:
 \end{enumerate}

\noindent {\em Acknowledgments:} we would like to thank Richard Kenyon for asking the questions solved in this paper.

\section{The critical $Z$-invariant Ising model}

Consider an unoriented finite graph $G=(V(G),E(G))$, together with a collection of positive real numbers $J=(J_e)_{e\in E(G)}$ indexed by the edges of $G$. The {\em Ising model on $G$ with coupling constants $J$} is defined as follows. A \emph{spin configuration} $\sigma$ of $G$ is a function of the vertices of $G$ with values in $\{-1,+1\}$. The probability of occurrence of a spin configuration $\sigma$ is given by the {\em Ising Boltzmann measure}, denoted $P^J$:
\begin{equation*}
P^J(\sigma)=\frac{1}{Z^J}\exp\left(\sum_{e=uv\in E(G)}J_e\sigma_u\sigma_v\right),
\end{equation*}
where 
$
 Z^J=\sum_{\sigma\in\{-1,1\}^{V(G)}}\exp\left(\sum_{e=uv\in E(G)}J_e\sigma_u\sigma_v\right),
$
is the {\em Ising partition function}. 

We consider Ising models defined on a class of embedded graphs which have an additional property called {\em isoradiality}. A graph $G$ is said to be {\em isoradial}~\cite{Kenyon3}, if it has an embedding in the plane such that every face is inscribed in a circle of radius~1, and
all circumcenters of the faces are in the closure of the faces. From now on, when we speak 
of the graph $G$, we mean the graph together with a particular isoradial embedding in the
plane. Examples of isoradial graphs are the square and the honeycomb lattice,
see Figure \ref{fig:isingcrit} (left) for a more general example of isoradial graph.

To such a graph is naturally associated the {\em diamond graph}, denoted by $\GR$, defined as follows. Vertices of $\GR$ consist in the vertices
of $G$, and the circumcenters of the faces of $G$. The circumcenter of each
face is then joined to all vertices which are on the boundary of this face, see
Figure \ref{fig:isingcrit} (center). Since $G$ is isoradial,
all faces of $\GR$ are side-length-$1$ rhombi. Moreover, each edge $e$ of
$G$ is the diagonal of exactly one rhombus of $\GR$; we let $\theta_e$ be the
half-angle of the rhombus at the vertex it has in common with $e$, see Figure \ref{fig:isingcrit} (right). 

\begin{figure}[ht]
\begin{center}
\includegraphics[width=\linewidth]{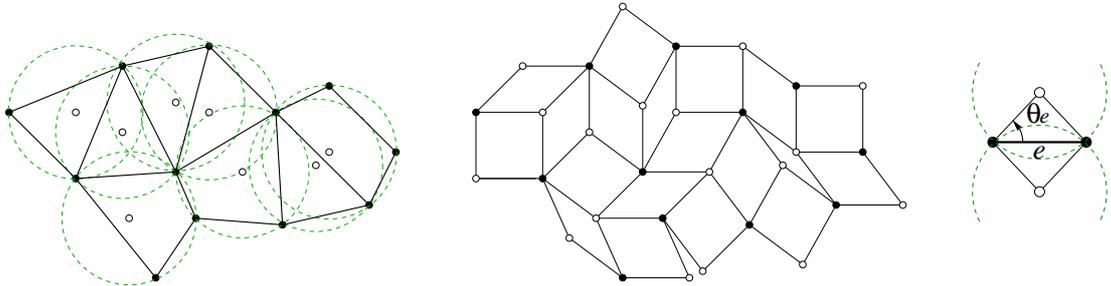}
\caption{Left: example of isoradial graph. Center: corresponding diamond graph. Right: rhombus half-angle associated to an edge $e$ of the graph.}
\label{fig:isingcrit}
\end{center}
\end{figure}

The same construction can be done for infinite and toroidal isoradial graphs, in which case the embedding is on a torus. When the isoradial graph is infinite and non periodic, in order to ensure that the embedding is locally finite, we assume that there exists an $\varepsilon>0$ such that the half-angle of every rhombus of $\GR$ lies between $\varepsilon$ and $\frac{\pi}{2}-\varepsilon$. This implies in particular that vertices of $G$ have bounded degree.

It is then natural to choose the coupling constants $J$ of the Ising model defined on an isoradial graph $G$, to depend on the geometry of the embedded graph: let us assume that $J_e$ is a function of $\theta_e$, the rhombus half-angle assigned to the edge $e$. 

We impose two more conditions on the coupling constants. First, we ask that the Ising model on $G$ with coupling constants $J$ as above is {\em $Z$-invariant}, that is, invariant under {\em star-triangle} transformations of the underlying graph. Next, we impose that the Ising model satisfies a generalized form of {\em self-duality}. These conditions completely determine the coupling constants $J$, known as 
{\em critical coupling constants}: for every edge $e$ of $G$,
\begin{equation}\label{eq:critical_coupling}
  J(\theta_e) = \frac{1}{2} \log \left(\frac{1+\sin \theta_e}{\cos \theta_e}\right).
\end{equation}
The $Z$-invariant Ising model on an isoradial graph with this particular choice of coupling constants is referred to as the \emph{critical $Z$-invariant Ising model}. This model was introduced by Baxter in \cite{BaxterZ}. A more detailed definition is given in \cite{isoising1}.

\section{Fisher's correspondence between the Ising and dimer models}

Fisher \cite{Fisher} exhibits a correspondence between the Ising model on any graph
$G$ drawn on a surface without boundary, and the dimer model on a ``decorated'' version of $G$. Before explaining this correspondence, let us first recall the definition of the dimer model.

\subsection{Dimer model}

Consider a finite graph $\GD=(V(\GD),E(\GD))$, and suppose that edges of $\GD$ are assigned a positive weight function $\nu=(\nu_e)_{e\in E(\GD)}$. The {\em dimer model on $\GD$ with weight function $\nu$} is defined as follows.

A {\em dimer configuration} $M$ of $\GD$, also called {\em perfect matching}, is a subset of edges of $\GD$ such that every vertex is incident to exactly one edge of $M$. Let $\M(\GD)$ be the set of dimer configurations of the graph $\GD$. The probability of occurrence of a dimer configuration $M$ is given by the {\em dimer Boltzmann measure}, denoted $\P^\nu$:
\begin{equation*}
 \P^\nu(M)=\frac{\prod_{e\in M}\nu_e}{\Z^\nu},
\end{equation*}
where $\Z^\nu=\sum_{M\in \M(\GD)}\prod_{e\in M}\nu_e$ is the {\em dimer partition function}.

\subsection{Fisher's correspondence}

Consider an Ising model on a finite graph $G$ embedded on a surface without boundary, with coupling constants $J$. We use the following slight variation of Fisher's correspondence \cite{Fisher}.

The decorated graph, on which the dimer configurations live, is constructed from $G$ as follows. Every vertex of degree $k$ of $G$ is replaced by a {\em
decoration} consisting of $3k$ vertices: a triangle is attached to
every edge incident to this vertex, and these triangles are linked by edges in
a circular way, see Figure \ref{fig:decorated_graph} below. This new graph, denoted by $\GD$, is also embedded on the surface without boundary and has vertices of degree $3$. It is referred to as the {\em Fisher graph} of $G$. 

\begin{figure}[ht]
\begin{center}
\includegraphics[height=2.8cm]{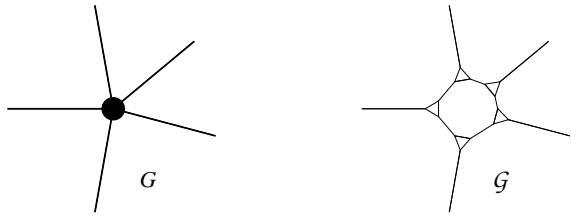}
\caption{Left: a vertex of $G$ with its incoming edges. Right: corresponding decoration in $\GD$.}
\label{fig:decorated_graph}
\end{center}
\end{figure}
Fisher's correspondence uses the high temperature expansion\footnote{It is also possible to use the \emph{low temperature expansion}, if the spins of the the Ising model do not sit on the vertices but on the faces of $G$.} of the Ising partition function, see for example \cite{Baxter}:
\begin{equation*}\label{eq:Zhightemp}
Z^J=\left(\prod_{e\in E(G)}\cosh(J_e)\right)2^{|V(G)|}\sum_{\mathsf{C}\in\mathsf{P}} \prod_{e\in\mathsf{C}} \tanh(J_e),
\end{equation*}
where $\mathsf{P}$ is the family of all polygonal contours drawn on $G$, for
which every edge of $G$ is used at most once. This expansion defines a measure on the set of polygonal contours $\mathsf{P}$ of $G$: the probability of occurrence of a polygonal contour $\mathsf{C}$ is proportional to the product of the weights of the edges it contains, where the weight of an edge $e$ is $\tanh(J_e)$.

Here comes the correspondence: to any contour configuration $\mathsf{C}$ coming from the high-temperature expansion of the Ising model on $G$, we associate $2^{|V(G)|}$ dimer configurations on $\GD$: edges present (resp. absent) in $\mathsf{C}$ are
absent (resp. present) in the corresponding dimer configuration of $\GD$. Once
the state of these edges is fixed, there is, for every decorated vertex,
exactly two ways to complete the configuration into a dimer configuration. Figure \ref{fig:Fisher_correspondence} below gives an example in the case where $G$ is the square lattice $\ZZ^2$.\\

\begin{figure}[ht]
\begin{center}
\includegraphics[width=\linewidth]{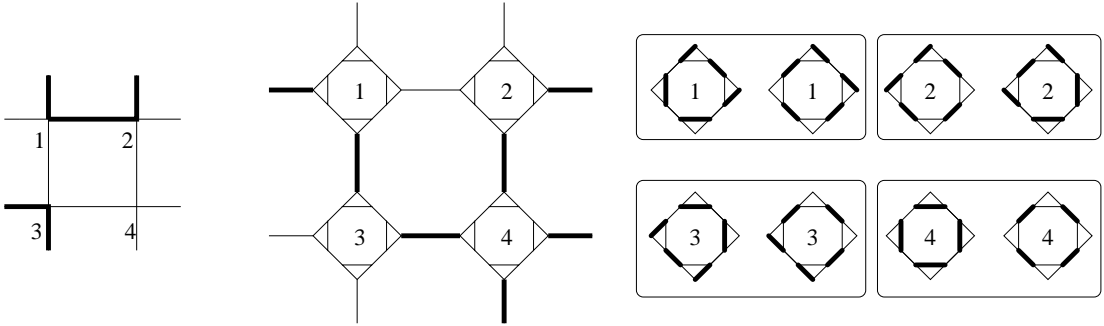}
\caption{Polygonal contour of $\ZZ^2$, and corresponding dimer configurations
  of the associated Fisher graph.}
\label{fig:Fisher_correspondence}
\end{center}
\end{figure}

Let us assign, to an edge $e$ of $\GD$, weight $\nu_e=1$, if it belongs to a
decoration; and weight $\nu_e=\coth{J_e}$, if it corresponds to an edge of $G$.
Then the correspondence is measure-preserving: every contour configuration $\mathsf{C}$
has the same number ($2^{|V(G)|}$) of images by this correspondence, and the product of the weights of the edges in $\mathsf{C}$, $\prod_{e\in \mathsf{C}} \tanh(J_e)$
is proportional to the weight $\prod_{e\not\in\mathsf{C}} \coth(J_e)$  of any of its corresponding dimer configurations for a proportionality factor, $\prod_{e\in E(G)} \tanh(J_e)$, which is independent of $\mathsf{C}$.

As a consequence of Fisher's correspondence, we have the following relation between the Ising and dimer partition functions:
\begin{equation}
Z^J=\left(\prod_{e\in E(G)}\sinh(J_e)\right)\Z^\nu.
\label{eq:Z-fisher}
\end{equation}

Fisher's correspondence between Ising contour configurations and dimer configurations naturally 
extends to the case where $G$ is an infinite planar graph.

\subsection{Critical dimer model on Fisher graphs}

Consider a critical $Z$-invariant Ising model on an isoradial graph $G$ on the torus, or on the whole plane. 
Then, the dimer weights of the corresponding dimer model on the Fisher graph $\GD$ are:
\begin{equation*}
  \nu_e=\begin{cases}
    1 & \text{if $e$ belongs to a decoration,}\\
    \nu(\theta_e)=\cot\left(\frac{\theta_e}{2}\right) & \text{if $e$ comes from an edge of $G$.}
  \end{cases}
\end{equation*}
We refer to these weights as {\em critical dimer weights}, and to the corresponding dimer model as {\em critical dimer model on the Fisher graph $\GD$}.

\section{Kasteleyn matrix on critical infinite Fisher graphs}

In the whole of this section, we let $G$ be an infinite isoradial graph, and $\GD$ be the corresponding Fisher graph. We suppose that edges of $\GD$ are assigned the dimer critical weight function denoted by $\nu$. Recall that $G^{\diamond}$ denotes the diamond graph associated to $G$.

\subsection{Kasteleyn and inverse Kasteleyn matrix}\label{subsec41}

The key object used to obtain explicit expressions for the dimer model on the Fisher graph $\GD$ is the {\em Kasteleyn matrix} introduced by Kasteleyn in \cite{Kast61}. It is a weighted, oriented adjacency matrix of the graph $\GD$ defined as follows.

A {\em Kasteleyn orientation} of $\GD$ is an orientation of the edges of $\GD$ such that all 
elementary cycles are {\em clockwise odd}, i.e. when
traveling clockwise around the edges of any elementary cycle of $\GD$, the
number of co-oriented edges is odd. When the graph is planar, such an orientation always exists \cite{Kasteleyn}. For later purposes, we need to keep track of the orientation of the
edges of $\GD$.
We thus choose a specific Kasteleyn orientation of $\GD$ in which every triangle of every decoration is oriented clockwise. Having a Kasteleyn orientation of the graph $\GD$ then amounts to finding a Kasteleyn orientation of the planar graph obtained from $\GD$ by contracting each triangle to a single vertex, which exists by Kasteleyn's theorem \cite{Kasteleyn}. Refer to Figure \ref{fig:Kast_orientation} for an example of such an
orientation in the case where $G=\ZZ^2$.\\

\begin{figure}[h]
\begin{center}
\includegraphics[height=6cm]{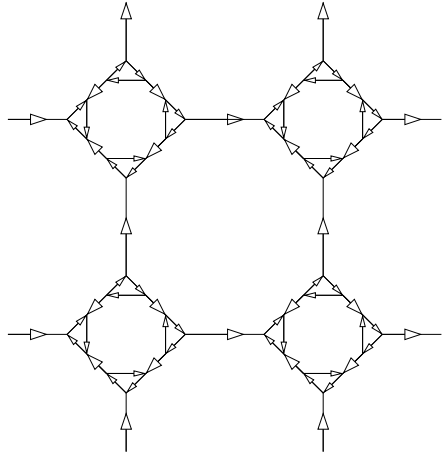}
\caption{An example of Kasteleyn orientation of the Fisher graph of $\ZZ^2$, in which every triangle of every decoration is oriented clockwise.}
\label{fig:Kast_orientation}
\end{center}
\end{figure}
The {\em Kasteleyn matrix} corresponding to such an orientation is an infinite matrix, whose rows and columns are indexed by vertices of $\GD$, defined by:

\begin{equation*}
K_{x,y}=\eps_{x,y}\nu_{xy},
\end{equation*}
where 
\begin{equation*}
\eps_{x,y}=
\begin{cases}
 1&\text{ if }x\sim y,\text{ and } x\rightarrow y\\
-1&\text{ if } x\sim y,\text{ and } x\leftarrow y\\
0&\text{ else}.
\end{cases}
\end{equation*}

Note that $K$ can be interpreted as an operator acting on $\CC^{V(\GD)}$:
\begin{equation*}
\forall f \in\CC^{V(\GD)},\quad (Kf)_x=\sum_{y\in V(\GD)}K_{x,y}f_{y}.
\end{equation*}

An {\em inverse of the Kasteleyn matrix $K$}, denoted $K^{-1}$, is an infinite matrix whose rows and columns are indexed by vertices of $\GD$, and which satisfies 
$KK^{-1}=\Id$.

\subsection{Local formula for an inverse Kasteleyn matrix}\label{subsc42}

In this section, we state Theorem \ref{inverse} proving an explicit {\em local} expression for the coefficients of an inverse $K^{-1}$ of the Kasteleyn matrix $K$. This inverse is the key object for the critical dimer model on the Fisher graph $\GD$. Indeed, it yields an explicit local expression for a Gibbs measure on dimer configurations of $\GD$, see Section \ref{sec:measure}. It also allows for a simple derivation of Baxter's formula for the free energy of the critical $Z$-invariant Ising model, see Section \ref{sec:free_energy}.

This section is organized as follows. The explicit expression for the coefficients of $K^{-1}$ given by Theorem \ref{inverse} below, see also Theorem \ref{thm:inverse_intro}, is a contour integral of an integrand involving two quantities:
\begin{itemize}
\item a complex-valued function depending on a complex parameter, and on the vertices
of the graph $\GD$ only, defined in Section \ref{subsec53}.
\item the {\em discrete exponential function} which first appeared in \cite{Mercat2}, see also \cite{Kenyon3}. It is a complex valued function depending  on a complex parameter, and on an edge-path between pairs of vertices of the graph $G$, defined in Section \ref{subsec:discr_exp}.
\end{itemize}
Theorem \ref{inverse} is then stated in Section \ref{subsec55}. Since the proof is long, it is postponed until Section \ref{sec:proof}. In Section \ref{sec:asympt}, using the same technique as \cite{Kenyon3}, we give the asymptotic expansion of the coefficient $K^{-1}_{x,y}$ of the inverse Kasteleyn matrix, as $|x-y|\rightarrow\infty$.

\subsubsection{Preliminary notations}\label{subsec51}

From now on, vertices of $\GD$ are written in normal symbol, and vertices of $G$ in boldface. Let $x$ be a vertex of $\GD$, then $x$ belongs to the decoration corresponding to a
unique vertex of $G$, denoted by~$\xb$. Conversely, vertices of $\GD$ of the decoration corresponding to a vertex $\xb$ of $G$ are labeled as follows, refer to Figure \ref{fig:notations} for an example. Let $d(\xb)$ be the degree of the vertex $\xb$ in $G$, then the corresponding decoration of $\GD$ consists of $d(\xb)$ triangles, labeled from $1$ to $d(\xb)$ in counterclockwise order. For the $k$-th triangle, let $v_k(\xb)$ be the vertex incident to an edge of $G$, and let $w_k(\xb),z_{k}(\xb)$ be the two other vertices, in counterclockwise order, starting from $v_{k}(\xb)$.

\begin{figure}[h]
\begin{center}
\includegraphics[height=3.3cm]{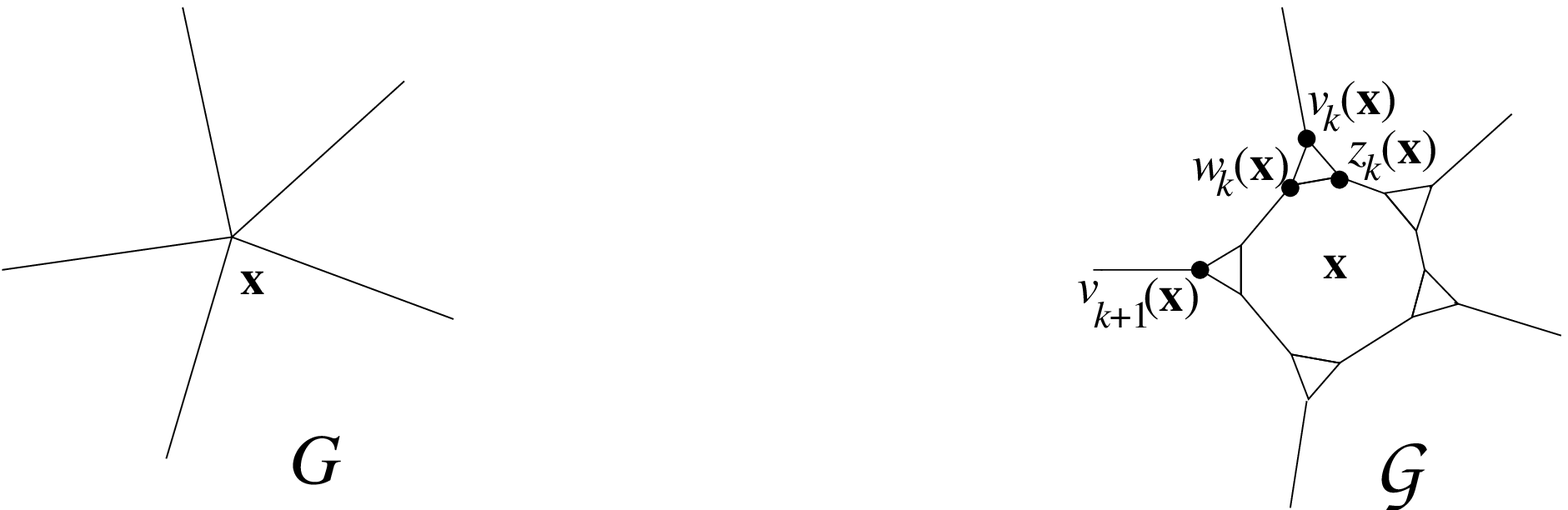}
\caption{Notations for vertices of $\GD$.}
\label{fig:notations}
\end{center}
\end{figure}

Later on, when no confusion occurs, we will drop the argument $\xb$ in
the above labeling. Define a vertex $x$ of $\GD$ to be of {\it type `$v$'}, if
$x=v_k(\xb)$ for some $k\in\{1,\cdots,d(\xb)\}$, and similarly for `$w$' and `$z$'.

The isoradial embedding of the graph $G$ fixes an embedding of the corresponding diamond graph $\GR$. There is a natural way of assigning rhombus unit-vectors of $\GR$ to vertices of $\GD$: for every vertex $\xb$ of $G$, and every $k\in\{1,\cdots,d(\xb)\}$, let us associate the rhombus unit-vector $e^{i\alpha_{w_{k}(\xb)}}$ to $w_k(\xb)$, $e^{i \alpha_{z_{k}(\xb)}}$ to $z_k(\xb)$, and the two rhombus-unit vectors $e^{i\alpha_{w_{k}(\xb)}}$, $e^{i\alpha_{z_{k}(\xb)}}$ to $v_k(\xb)$, as in Figure \ref{fig:rhombusvectors} below. Note that $e^{i\alpha_{w_k(\xb)}}=e^{i\alpha_{z_{k+1}(\xb)}}$.

\begin{figure}[ht]
\begin{center}
\includegraphics[height=3.6cm]{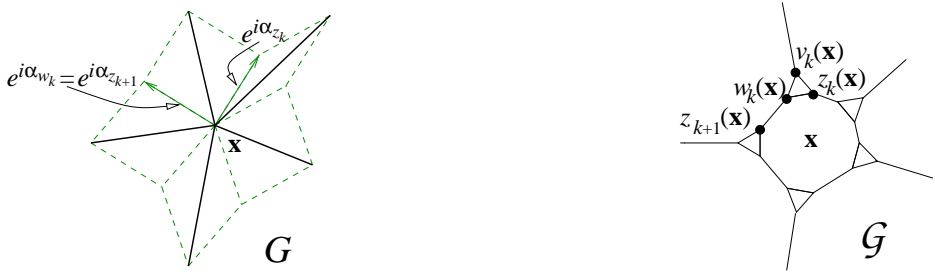}
\end{center}
\caption{Rhombus vectors of the diamond graph $\GR$ assigned to vertices of $\GD$.}
\label{fig:rhombusvectors}
\end{figure}

\subsubsection{Complex-valued function on the vertices of $\GD$}\label{subsec53}

Let us introduce the complex-valued function defined on vertices of $\GD$ and depending on a complex parameter, involved in the integrand of the contour integral of $K^{-1}$ given by Theorem \ref{inverse}. Define $f:V(\GD)\times\CC\rightarrow\CC$, by:
\begin{align*}
f(w_k(\xb),\lambda):=f_{w_k(\xb)}(\lambda)&=\frac{e^{i\frac{\alpha_{w_k(\xb)}}{2}}}{e^{i\alpha_{w_k(\xb)}}-\lambda},\\
f(z_k(\xb),\lambda):=f_{z_k(\xb)}(\lambda)&=-\frac{e^{i\frac{\alpha_{z_k(\xb)}}{2}}}{e^{i\alpha_{z_k(\xb)}}-\lambda},\\
f(v_k(\xb),\lambda):=f_{v_k(\xb)}(\lambda)&=f_{w_k(\xb)}(\lambda)+f_{z_k(\xb)}(\lambda),
\end{align*}
for every $\xb\in G$, and every $k\in\{1,\cdots,d(\xb)\}$. In order for the function $f$ to be well defined, the angles $\alpha_{w_k(\xb)}$,
$\alpha_{z_k(\xb)}$ need to be well defined mod $4\pi$, indeed half-angles need to be well defined mod $2\pi$. Let us define them inductively as follows, see also Figure \ref{fig:angles}. 
The definition strongly depends on the Kasteleyn orientation introducted in Section \ref{subsec41}.
Fix a vertex $\xb_0$
of $G$, and set $\alpha_{z_1(\xb_0)}=0$. Then, for vertices of $\GD$ in
the decoration of a vertex $\xb\in G$, define:
\begin{align}\label{eq:angle_intra_deco}
\nonumber
&\alpha_{w_k(\xb)}=\alpha_{z_k(\xb)}+2\theta_k(\xb),\text{ where
  $\theta_k(\xb)>0$ is the rhombus half-angle of Figure \ref{fig:angles},}\\ 
&\alpha_{z_{k+1}(\xb)}=
\begin{cases}
\alpha_{w_k(\xb)}& \text{if the edge } w_k(\xb) z_{k+1}(\xb) \text{ is oriented
from } w_k(\xb) \text{ to } z_{k+1}(\xb) \\
\alpha_{w_k(\xb)}+2\pi&\text{else}.
\end{cases}
\end{align}
Here is the rule defining angles in the neighboring decoration, corresponding to a vertex $\yb$ of $G$. Let $k$ and $\l$ be indices such that $v_k(\xb)$ is adjacent to $v_\l(\yb)$ in $\GD$. Then, define:
\begin{equation}\label{eq:angle_chg_deco}
\alpha_{w_\l(\yb)}=
\left\{
\begin{array}{ll}
\alpha_{w_k(\xb)}-\pi&\text{if the edge } v_k(\xb)v_\l(\yb) \text{ is
  oriented from } v_k(\xb)\text { to } v_\l(\yb)\\
\alpha_{w_k(\xb)}+\pi&\text{ else}.
\end{array}
\right.
\end{equation}

\begin{figure}[ht]
\begin{center}
\includegraphics[width=13cm]{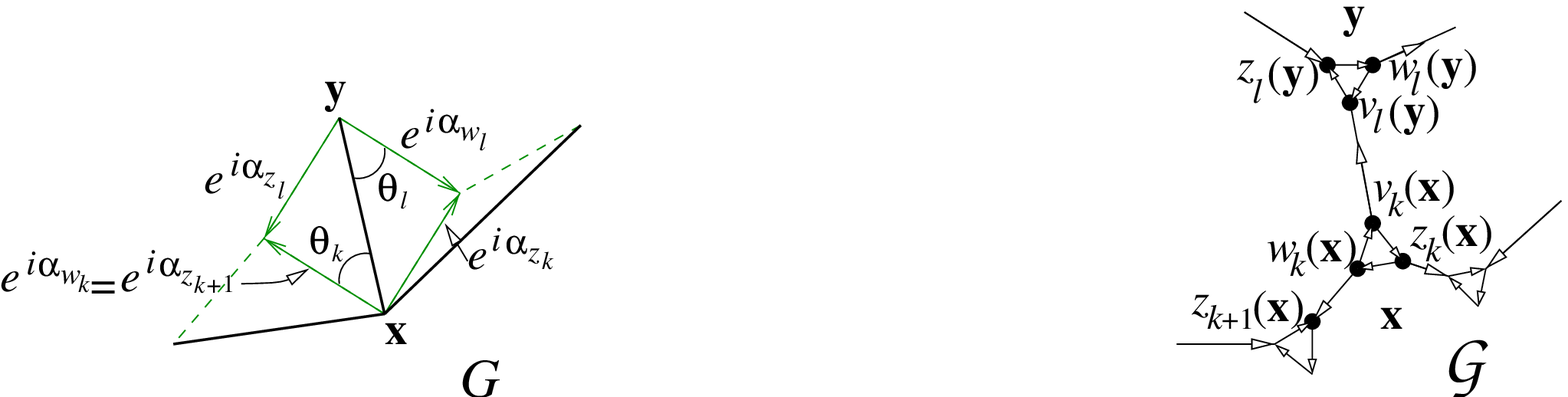}
\end{center}
\caption{Notations for the definition of the angles in $\RR/4\pi\ZZ$.}
\label{fig:angles}
\end{figure}

\begin{lem}\label{lem:angles4pi}
For every vertex $\xb$ of $G$, and every $k\in\{1,\cdots,d(\xb)\}$, the angles 
$\alpha_{w_k(\xb)}$, $\alpha_{z_k(\xb)}$, are well defined in $\RR/4\pi\ZZ$.
\end{lem}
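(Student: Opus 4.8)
The plan is to show that the angles are well defined in $\RR/4\pi\ZZ$ by verifying that the inductive rules \eqref{eq:angle_intra_deco} and \eqref{eq:angle_chg_deco} are consistent, i.e.\ that traveling around any closed path in the ``angle-propagation graph'' accumulates a total increment that is a multiple of $4\pi$. First I would observe that the only places where a choice is made are: (i) moving from triangle $k$ to triangle $k+1$ inside a decoration, where one adds $0$ or $2\pi$ depending on the orientation of the edge $w_k(\xb)z_{k+1}(\xb)$; and (ii) crossing an edge of $G$ from one decoration to a neighboring one, where one adds $\pm\pi$ depending on the orientation of $v_k(\xb)v_\l(\yb)$. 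Since $\GD$ is connected, once $\alpha_{z_1(\xb_0)}$ is fixed, the value of every $\alpha_{w_k(\xb)}$, $\alpha_{z_k(\xb)}$ is determined along any chosen path; well-definedness amounts to checking path-independence, and since these increments are all valued in $\frac{\pi}{?}$-translates, it suffices to check that around each \emph{elementary cycle} of the relevant graph the total increment lies in $4\pi\ZZ$.

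The key computation is to split the increment around an elementary cycle into a ``geometric part'' and a ``combinatorial part''. The geometric part comes from the rhombus half-angles $2\theta_k(\xb)$ added in \eqref{eq:angle_intra_deco} together with the $-\pi$ shifts in \eqref{eq:angle_chg_deco}: going once around a vertex $\xb$ of $G$ through all its triangles and back, the half-angles sum to $2\pi$ (the rhombi around $\xb$ fill the full angle), and combined with the intra-decoration jumps this should produce a controlled multiple of $2\pi$. The combinatorial part collects all the $+2\pi$ corrections from case (i) and the sign corrections $\pm\pi$ from case (ii); the point is that the Kasteleyn orientation was chosen (Section \ref{subsec41}) precisely so that every triangle is oriented clockwise and every elementary cycle is clockwise odd. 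I would translate ``clockwise odd'' into a statement about the parity of the number of co-oriented edges encountered along the cycle, and show that this parity is exactly what is needed for the sum of the $\pi$- and $2\pi$-corrections to be an even multiple of $\pi$, hence in $4\pi\ZZ$. In effect, the $\pi$-ambiguity in the naive (mod $2\pi$) definition of half-angles is killed by the clockwise-odd condition on cycles.

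Concretely, I would first treat a single decoration: check that starting from $z_1(\xb)$, applying \eqref{eq:angle_intra_deco} successively around all $d(\xb)$ triangles and requiring consistency when we return to $z_1(\xb)=z_{d(\xb)+1}(\xb)$ forces a constraint which is automatically satisfied because the triangles are all clockwise-oriented (so the parity of orientation-reversals around the decoration cycle is fixed) and because $\sum_k 2\theta_k(\xb)=2\pi$. Then I would handle a general elementary cycle of $G$ (equivalently of $\GD$ after contracting triangles): each edge-crossing contributes via \eqref{eq:angle_chg_deco}, each passage through a decoration contributes via \eqref{eq:angle_intra_deco}, the geometric contributions of rhombus angles telescope to a multiple of $2\pi$ by the isoradiality of the embedding, and the residual sign/$2\pi$ contributions sum to an element of $4\pi\ZZ$ by the clockwise-odd property. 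Since every cycle decomposes into elementary ones, this proves path-independence and hence the lemma.

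The main obstacle I expect is bookkeeping: carefully matching the orientation conventions of the chosen Kasteleyn orientation with the two cases in \eqref{eq:angle_intra_deco} and \eqref{eq:angle_chg_deco}, and correctly counting, around an elementary cycle, how many co-oriented edges appear so as to invoke the clockwise-odd condition with the right parity. One must be attentive to the contracted-triangle reduction used to define the Kasteleyn orientation: a cycle of $\GD$ that enters and leaves a decoration corresponds, after contraction, to a cycle passing through a single contracted vertex, and one must verify that the intra-decoration increments are compatible with this identification. Once the parity argument is set up correctly, the geometric part (summation of rhombus half-angles to $2\pi$) is routine.
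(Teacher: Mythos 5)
Your proposal takes essentially the same route as the paper: reduce well-definedness to checking that the angle increment around each elementary cycle of $\GD$ lies in $4\pi\ZZ$, distinguish between inner decoration cycles and cycles around faces of $G$, use the geometric fact that the rhombus half-angles around a vertex (resp.\ the supplementary angles at a face center) sum to $2\pi$, and use the clockwise-odd property of the Kasteleyn orientation to control the parity of the $\pi$- and $2\pi$-corrections. The paper carries out exactly this bookkeeping, obtaining total increments $2\pi(n+1)$ and $2\pi(n-1)$ with $n$ odd, so your outline is correct and matches the paper's proof.
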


\begin{proof}
It suffices to show that when doing the inductive procedure around a
cycle of $\GD$, we obtain the same angle modulo $4\pi$. There are two
types of cycles to consider: inner cycles of decorations, and cycles
of $\GD$ coming from the boundary of a face of $G$. These cycles are represented in Figure~\ref{fig:contour_verifangles}.

\begin{figure}[ht]
  \begin{center}
    \includegraphics[width=12.5cm]{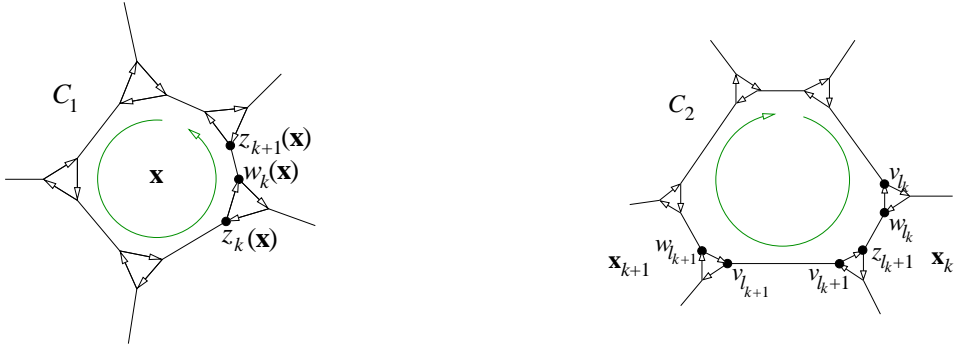}
    \caption{The two types of cycles on which the definition of the angles in $\RR/4\pi\ZZ$ needs to be checked. Left: $C_1$, the inner cycle of a decoration, oriented counterclockwise. Right: $C_2$, a cycle coming from the boundary of a face of $G$, oriented clockwise.}
    \label{fig:contour_verifangles}
  \end{center}
\end{figure}

Let $C_1$ be a cycle of the first type, that is $C_1$ is the inner
cycle of a decoration corresponding to a vertex $\xb$ of $G$, oriented counterclockwise. Let $d=d(\xb)$ be the degree of the vertex $\xb$ in $G$, then:
\begin{equation*}
  C_1= \bigl( z_1(\xb),w_1(\xb),\dots, z_d(\xb), w_d(\xb), z_1(\xb) \bigr)=\bigl(z_1,w_1,\dots,z_d,w_d,z_1\bigr).
\end{equation*}
According to our choice of Kasteleyn orientation, all edges of this cycle are oriented counterclockwise with respect to the face surrounded by $C_1$, except an odd number $n$ of edges of the form $w_k z_{k+1}$. By definition of the angles \eqref{eq:angle_intra_deco}, when jumping from $z_k$ to $z_{k+1}$, the change in angle is $2\theta_k$ if $w_{k} z_{k+1}$ is oriented counterclockwise, and $2\theta_k+2\pi$ otherwise. The total change along the cycle is thus,
\begin{equation*}
  \sum_{k=1}^{d}2\theta_k + 2\pi n = 2\pi (n +1) \equiv 0 \ \text{ mod }4\pi.
\end{equation*}

Let $C_2$ be a cycle of the second type, around a face touching $m$ decorations corresponding to vertices $\xb_1,\cdots,\xb_m$ of $G$. Suppose that $C_2$ is oriented clockwise.
\begin{multline*}
  C_2 = \bigl(w_{\ell_1}(\xb_1), z_{\ell_1+1}(\xb_1), v_{\ell_1 + 1}(\xb_1),\dots,\\
  \dots,v_{\ell_m}(\xb_m),w_{\ell_m}(\xb_m), z_{\ell_m+1}(\xb_m), v_{\ell_m + 1}(\xb_m),v_{\ell_1}(\xb_1),w_{\ell_1}(\xb_1)\bigr).
\end{multline*}
The cycle $C_2$ contains $4m$ edges, and the number $n$ of co-oriented edges along $C_2$ is odd by definition of a Kasteleyn orientation. By our choice of Kasteleyn orientation, the only edges that may be co-oriented along the cycle, are either of the form $w_{\ell_{k}}(\xb_k) z_{\ell_k +1 }(\xb_k)$, or of the form $v_{\ell_k +1}(\xb_k) v_{\ell_{k+1}}(\xb_{k+1})$.

For every $k\in\{1,\dots,m\}$, there are two different contributions to the total change in angle from $w_{\ell_k}(\xb_k)$ to $w_{\ell_{k+1}}(\xb_{k+1})$. First, from $w_{\ell_k}(\xb_k)$ to $z_{\ell_k +1}(\xb_k)$, by \eqref{eq:angle_intra_deco},
there is a contribution of $2\pi$ if the corresponding edge
is co-oriented. Then, from $z_{\ell_k +1}(\xb_k)$ to $w_{\ell_{k+1}}(\xb_{k+1})$, by \eqref{eq:angle_chg_deco}, the angle is changed by $2\theta_{\ell_{k}+1}(\xb_k)-\pi =2\theta_{\ell_{k+1}}(\xb_{k+1})-\pi$, plus an extra contribution of $2\pi$ if the edge $v_{\ell_k +1}(\xb) v_{\ell_{k+1}}(\xb_{k+1})$ is co-oriented.
Thus the total change in angle is:
\begin{equation*}
  \sum_{k=1}^{m} (2\theta_{\ell_k}(\xb_k)-\pi) + 2\pi n.
\end{equation*}
But the terms $\pi -2\theta_{\ell_k}(\xb_k)$ are the angles of the rhombi at the center of the face surrounded by $C_2$, which sum to $2\pi$.
Therefore, the total change is equal to $2\pi(n-1)$ which is congruent to $0$ mod $4\pi$.
\end{proof}

These angles at vertices are related to the notion of spin structure on surface graphs (see \cite{CimaReshe1}, \cite{CimaReshe2}, \cite{Kuperberg}). A spin structure on a surface is equivalent to the data of a vector field with even index singularities. Kuperberg explained in \cite{Kuperberg} how to construct from a Kasteleyn orientation a vector field with odd index singularities at vertices of $G$. One can then obtain the spin structure by merging the singularities into pairs using a reference dimer configuration.

The angles $\alpha_{w_k(\xb)}$ and $\alpha_{z_k(\xb)}$ defined here are directly related to the direction of the vector field with even index singularities  obtained from Kuperberg's construction applied to our choice of Kasteleyn orientation, and from the pairing of singularities corresponding to the following reference dimer configuration:
\begin{itemize}
 \item $w_k(\xb) \leftrightarrow z_k(\xb)$,
\item $v_k(\xb) \leftrightarrow v_\l(\yb)$ if they are neighbors.
\end{itemize}

\subsubsection{Discrete exponential functions\label{subsec:discr_exp}}

Let us define the {\em discrete exponential function}, denoted $\expo$, involved in the integrand of the contour integral of $K^{-1}$ given by Theorem \ref{inverse}. This function first appeared in \cite{Mercat2}, see also \cite{Kenyon3}. In order to simplify notations, we use a different labeling
of the rhombus vectors of $\GR$. Let $\xb,\yb$ be two vertices of
$G$, and let $\yb=\xb_1,\xb_2,\cdots,\xb_{n+1}=~\xb$ be an edge-path of $G$ from
$\yb$ to $\xb$. The complex vector $\xb_{j+1}-\xb_j$ is the sum of two unit complex numbers $e^{i\beta_j}+e^{i\gamma_j}$ representing edges of the rhombus in $G^{\diamond}$ associated to the edge $\xb_j \xb_{j+1}$.

Then, $\expo:V(G)\times V(G)\times \CC\rightarrow\CC$ is defined by:
\begin{equation*}
 \expo(\xb,\yb,\lambda):=\expo_{\xb,\yb}(\lambda)=\prod_{j=1}^{n}
\left(\frac{e^{i\beta_j}+\lambda}{e^{i\beta_j}-\lambda}\right)
\left(\frac{e^{i\gamma_j}+\lambda}{e^{i\gamma_j}-\lambda}\right).
\end{equation*}
The function is well defined (independent of the choice of edge-path of $G$ from $\yb$ to $\xb$) since the product of the multipliers around a rhombus is $1$.

\subsubsection{Inverse Kasteleyn matrix}\label{subsec55}

We now state Theorem \ref{inverse} proving an explicit local formula for the coefficients of an inverse $K^{-1}$ of the Kasteleyn matrix $K$. The vertices $x$ and $y$ of $\GD$ in the statement should be thought of as being one of $w_k(\xb)$, $z_k(\xb)$, $v_k(\xb)$ for some $\xb\in G$ and some $k\in\{1,\cdots,d(\xb)\}$, and similarly for $y$.
The proof of Theorem \ref{inverse} is postponed until Section \ref{sec:proof}.

\begin{thm}\label{inverse}
Let $x,y$ be any two vertices of $\GD$. Then the infinite matrix $K^{-1}$, whose coefficient $K^{-1}_{x,y}$ is given by \eqref{invKast} below, is an inverse Kasteleyn matrix.
\begin{equation}\label{invKast}
K^{-1}_{x,y}=
\frac{1}{(2\pi)^2}
\oint_{\C_{x,y}}f_{x}(\lambda)f_{y}(-\lambda)
\expo_{\xb,\yb}(\lambda)\log\lambda \ud\lambda+C_{x,y}.
\end{equation}
The contour of
integration $\C_{x,y}$ is a simple closed curve oriented counterclockwise containing all
poles of the integrand, and avoiding the half-line $d_{x,y}$ starting from zero \footnote{ In most cases, the half-line $d_{x,y}$ is oriented from $\hat{\xb}$ to $\hat{\yb}$, where $\hat{\xb}$, $\hat{\yb}$ are vertices of $\GR$ at distance at most two from $\xb$, $\yb$, constructed in Section \ref{subsec73}. Refer to this section and Remark \ref{rem:rem1} for more details.}. The
constant $C_{x,y}$ is given by
\begin{equation*}
C_{x,y}=
\begin{cases}
\frac{1}{4}&\text{ if $x=y=w_k(\xb)$}\\
-\frac{1}{4}&\text{ if $x=y=z_k(\xb)$}\\
\frac{(-1)^{n(x,y)}}{4}&\text{ if $\xb=\yb$, $x\neq y$, and $x$ and $y$ are of type
  `$w$' or `$z$'}\\
0&\text{ else},
\end{cases}
\end{equation*}
where $n(x,y)$ is the number of edges oriented clockwise in the clockwise arc from
$x$ to $y$ of the inner cycle of the decoration corresponding to $\xb$.
\end{thm}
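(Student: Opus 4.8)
The plan is to follow the strategy outlined in comment (4) of the introduction, adapted from \cite{Kenyon3}: exhibit a family of functions in the kernel of $K$, express $K^{-1}$ as a contour integral against these functions, and choose the contours so that $KK^{-1}=\Id$. Concretely, the first step is to verify that for every fixed $\lambda\in\CC$ the function $x\mapsto f_x(\lambda)$, with the half-angles $\alpha_{w_k(\xb)},\alpha_{z_k(\xb)}$ fixed in $\RR/4\pi\ZZ$ as in Lemma \ref{lem:angles4pi}, satisfies $(Kf_\cdot(\lambda))_x=0$ at every vertex $x$ of $\GD$. This is a local computation: there are exactly three types of vertices ($v$, $w$, $z$) and for each type the equation $(Kf)_x=0$ involves only the three neighbours of $x$ in $\GD$. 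At a $w$- or $z$-vertex two of the neighbours lie in the same triangle and the third is across a decoration edge (weight $1$); at a $v$-vertex two neighbours lie in the same triangle and the third is the $v$-vertex across an edge of $G$ (weight $\cot(\theta_e/2)$). Plugging in the explicit formulas for $f_{w_k},f_{z_k},f_{v_k}$, using $e^{i\alpha_{w_k}}=e^{i\alpha_{z_{k+1}}}$, the angle-update rules \eqref{eq:angle_intra_deco}--\eqref{eq:angle_chg_deco}, and the trigonometric identity $\cot(\theta/2)=\frac{|e^{i\alpha_w}+e^{i\alpha_z}|}{|e^{i\alpha_w}-e^{i\alpha_z}|}$ relating the critical weight to the rhombus vectors, each of these three identities reduces to an elementary rational-function identity in $\lambda$. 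The signs $\eps_{x,y}$ of the Kasteleyn orientation are exactly what make the cancellations work; this is where the careful bookkeeping of orientations in Section \ref{subsec41} pays off.

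Next, observe that the product $f_x(\lambda)f_y(-\lambda)\expo_{\xb,\yb}(\lambda)$ is, as a function of $\lambda$, a rational function with poles only at $\lambda=e^{i\alpha}$ for the finitely many rhombus-vector directions $\alpha$ occurring at $x$, at $y$, and along the edge-path from $\yb$ to $\xb$ used to define $\expo_{\xb,\yb}$ (the $f_y(-\lambda)$ factor contributes poles at $-e^{i\alpha_y}=e^{i(\alpha_y+\pi)}$). Multiplying by $\log\lambda$ on a branch cut along the half-line $d_{x,y}$ and integrating over a counterclockwise contour $\C_{x,y}$ enclosing all these poles, the integral is finite and well-defined provided $d_{x,y}$ avoids every pole — this is the role of the footnote's prescription that $d_{x,y}$ runs roughly from $\hat\xb$ to $\hat\yb$. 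To prove $KK^{-1}=\Id$ one fixes the column index $y$ and computes $(KK^{-1})_{x,y}=\sum_{z\sim x}K_{x,z}K^{-1}_{z,y}$. Since $K$ is local (three terms) and the contour can, after deforming, be taken common to the finitely many vertices $z\sim x$, one can pull the sum inside the integral:
\begin{equation*}
(KK^{-1})_{x,y}=\frac{1}{(2\pi)^2}\oint\Bigl(\sum_{z\sim x}K_{x,z}f_z(\lambda)\Bigr)f_y(-\lambda)\expo_{\xb,\yb}(\lambda)\log\lambda\,\ud\lambda+\sum_{z\sim x}K_{x,z}C_{z,y}.
\end{equation*}
When $\xb\neq\yb$, or more generally when $x$ is not in the decoration of $\yb$, the inner sum vanishes identically by the kernel property (the angles along the path are consistent), and the constant terms $C_{z,y}$ also cancel, giving $0$. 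When $x=y$ or $x$ and $y$ lie in the same decoration, the edge-path is trivial or very short, $\expo_{\xb,\yb}\equiv 1$ or close to it, the inner sum $\sum_z K_{x,z}f_z(\lambda)$ is no longer identically zero but is a rational function whose contour integral against $\log\lambda$ can be evaluated by residues, and one checks that it combines with the discrete jump in the constants $C_{z,y}$ to produce exactly $\delta_{x,y}$. The precise value $\pm\frac14$ of $C_{x,y}$, and the sign $(-1)^{n(x,y)}$, are reverse-engineered from this computation: they are the unique choice of "integration constants" making the residue bookkeeping close up around the inner cycle of a decoration, consistency of which is guaranteed by the parity statement already proved inside Lemma \ref{lem:angles4pi} (the number of clockwise-oriented decoration edges is odd).

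The main obstacle, as the authors themselves flag, is twofold. First, \emph{finding} the kernel functions $f_x$ with the correct $4\pi$-periodic half-angle convention: the half-integer exponents $e^{i\alpha/2}$ force one to track angles in $\RR/4\pi\ZZ$ rather than $\RR/2\pi\ZZ$, and getting the signs in \eqref{eq:angle_intra_deco}--\eqref{eq:angle_chg_deco} to match the Kasteleyn signs $\eps_{x,y}$ is delicate — this is genuinely different from the bipartite isoradial case of \cite{Kenyon3} because the Fisher graph is neither bipartite nor isoradial. Second, \emph{defining the contours} $\C_{x,y}$ and the branch line $d_{x,y}$ uniformly enough that (a) each individual $K^{-1}_{x,y}$ is well-defined and (b) for a fixed $x$ the three contours $\C_{z,y}$, $z\sim x$, can be deformed to a common contour so that the linearity-under-the-integral step is legitimate, all while $d_{x,y}$ dodges the moving set of poles as $y$ ranges over the neighbours — this requires the geometric construction of $\hat\xb,\hat\yb$ and the careful case analysis promised for Section \ref{subsec73}. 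Everything else — the kernel identities, the residue computations, the evaluation of the constants — is, once the setup is in place, a finite and essentially mechanical verification, which is presumably why the detailed proof is deferred to Section \ref{sec:proof}.
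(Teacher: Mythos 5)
Your high-level strategy matches the paper exactly: exhibit a family of $\lambda$-parametrized kernel functions of $K$, write $K^{-1}$ as a contour integral of these functions against $\log\lambda$, and check $KK^{-1}=\Id$ by pulling the local operator $K$ inside the integral and splitting into cases. This is precisely what Proposition~\ref{prop:kernelK}, the angular-sector construction of Section~\ref{subsec73}, and the bookkeeping of Propositions~\ref{prop:casered}, \ref{prop:KI} and Lemma~\ref{lem:C} accomplish. However, there are two genuine conceptual slips in your write-up.

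First, the kernel function is not $x\mapsto f_x(\lambda)$ alone but $x\mapsto f_x(\lambda)\expo_{\xb,\yb}(\lambda)$; the discrete exponential cannot be factored out of the kernel sum. At a $w$- or $z$-vertex the three neighbours lie in a single decoration, so the exponential is a common factor, but at a $v$-vertex one neighbour $v_\l(\xb')$ lies across an edge of $G$ in a different decoration, and there the exponential carries an extra factor $\expo_{\xb',\xb}(\lambda)$, which combines with the critical weight $\cot(\theta/2)$ to close the identity (Equation~\eqref{eq:Kv0}). Your displayed equation, with $\expo_{\xb,\yb}(\lambda)$ sitting outside $\sum_{z\sim x}K_{x,z}f_z(\lambda)$, is therefore incorrect at $v$-vertices; the statement that actually holds is Proposition~\ref{prop:kernelK}, where each term of the sum carries the exponential evaluated at the decoration of that neighbour.

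Second, you attribute the nonzero diagonal contribution to the inner sum $\sum_{z\sim x}K_{x,z}f_z(\lambda)$ being ``no longer identically zero'' when $x$ and $y$ are close. This is backwards: Proposition~\ref{prop:kernelK} shows the kernel identity holds identically in $\lambda$, independently of $y$. What fails when $x$ and $y$ lie in the same triangle of a decoration is the contour-combination step: the three contours $\C_{x_i,y}$ cannot be deformed to a common one because the intersection $\bigcap_i s_{x_i,y}$ of the angular sectors is empty. The nonzero value of $(KI)_{x,y}$ is the residue picked up between two mismatched contours, computed explicitly in the proof of Proposition~\ref{prop:KI} via formulas such as Equation~\eqref{eq:intww}, not a failure of the kernel identity. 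Relatedly, the dichotomy is not ``same decoration versus not,'' as you suggest, but ``same triangle versus not'': there are pairs $x,y$ in the same decoration but different triangles for which the paper must still verify $\bigcap_i s_{x_i,y}\neq\emptyset$ and hence $(KI)_{x,y}=0$, while the nonzero constants $C_{x_i,y}$ telescope around the inner cycle of the decoration (using $\eps_{z_k,w_k}=1$ and Equation~\eqref{eq:C}) to give $(KC)_{x,y}=0$. This telescoping, the conventional choice of sectors in the degenerate instances of Cases $2$ and $3$ of Section~\ref{subsubsec823}, and the verification that adjacent contours coincide ($s_{z_{k+1},y}=s_{w_k,y}$ and $s_{v_k,y}=s_{v_\l,y}$ in the proof of Proposition~\ref{prop:casered}) constitute the real content of the proof and are not ``essentially mechanical.''
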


\begin{rem}\label{rem:localformula}
The explicit expression for $K^{-1}_{x,y}$ given in \eqref{invKast} has the very interesting feature of being {\em local}, {\em i.e.} it only depends on the geometry of the embedding of the isoradial graph $G$ on a path between $\hat{\xb}$ and $\hat{\yb}$, where $\hat{\xb}$ and $\hat{\yb}$ are vertices of $\GR$ at distance at most $2$ from $\xb$ and $\yb$, constructed in Section \ref{subsec73}. A nice consequence of this property is the following.
For $i=1,2$, let $G_i$ be an isoradial graph with corresponding Fisher graph $\GD_i$. Let $K_i$ be the Kasteleyn matrix of the graph $\GD_i$, whose edges are assigned the dimer critical weight function. Let $(K_i)^{-1}$ be the inverse of $K_i$ given by Theorem \ref{inverse}. If $G_1$ and $G_2$ coincide on a ball $B$, and if the Kasteleyn orientations on $\GD_1$ and $\GD_2$ are chosen to be the same in the ball $B$, then for every couple of vertices $(x,y)$ in $B$ at distance at least $2$ from the boundary, we have
  \begin{equation*}
    (K_{1})^{-1}_{x,y}= (K_2)^{-1}_{x,y}.
  \end{equation*}
\end{rem}

\subsubsection{Asymptotic expansion of the inverse Kasteleyn matrix}
\label{sec:asympt}

As a corollary to Theorem \ref{inverse}, and using computations analogous to those of Theorem~$4.3$ of \cite{Kenyon3}, we obtain the asymptotic expansion for the coefficient $K^{-1}_{x,y}$ of the inverse Kasteleyn matrix, as $|\xb-\yb|\rightarrow\infty$. In order to give a concise statement, let us introduce the following simplified notations. When $x$ is of type `$w$' or `$z$', let $e^{i\alpha}$ denote the corresponding rhombus unit-vector of $\GR$; and when $x$ is of type `$v$', let $e^{i\alpha_1}$, $e^{i\alpha_2}$ be the two corresponding rhombus unit-vectors of $\GR$, defined in Section \ref{subsec51}. Moreover, set:
\begin{equation*}
\epsilon_{x}=
\begin{cases}
1 &\text{ if $x$ is of type `$w$' or `$v$'}\\
-1 &\text{ if $x$ is of type `$z$'}.
\end{cases}
\end{equation*}
A superscript ``prime'' is added to these notations for the vertex $y$, and $\epsilon_y$ is defined in a similar way. 

\begin{cor}\label{cor:asymptotics}
The asymptotic expansion, as $|\xb-\yb|\rightarrow\infty$, of the coefficient $K^{-1}_{x,y}$ of the inverse Kasteleyn matrix of Theorem \ref{inverse} is:
\begin{align*}
&K^{-1}_{x,y}=\\
&=\frac{\epsilon_x\epsilon_y}{2\pi}\begin{cases}
  \Im\Bigl(\frac{e^{i\frac{\alpha+\alpha'}{2}}}{\xb-\yb}\Bigr)+o\Bigl(\frac{1}{|\xb-\yb|}\Bigr)
&\text{if $x$ and $y$ are of type `$w$' or `$z$'},\\
\Im\Bigl(
\frac{e^{i\frac{\alpha}{2}}(e^{i\frac{\alpha_1'}{2}}-e^{i\frac{\alpha_2'}{2}})}{\xb-\yb}\Bigr)+o\Bigl(\frac{1}{|\xb-\yb|}\Bigr)
&\text{if $x$ is of type `$w$' or `$z$', $y$ is of type `$v$'},\\
\Im\Bigl(
\frac{(e^{i\frac{\alpha_1}{2}}-e^{i\frac{\alpha_2}{2}})(e^{i\frac{\alpha_1'}{2}}-e^{i\frac{\alpha_2'}{2}})}{\xb-\yb}\Bigr)+o\Bigl(\frac{1}{|\xb-\yb|}\Bigr)
&\text{if $x$ and $y$ are of type `$v$'}.\\
\end{cases}
\end{align*}
\end{cor}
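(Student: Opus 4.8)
The plan is to derive the asymptotic expansion directly from the contour-integral formula \eqref{invKast}, following the saddle-point / steepest-descent argument of Theorem~4.3 of \cite{Kenyon3}. First I would recall that the only factor in the integrand of \eqref{invKast} that grows or decays exponentially in $|\xb-\yb|$ is the discrete exponential $\expo_{\xb,\yb}(\lambda)$; the prefactor $f_x(\lambda)f_y(-\lambda)$ and the $\log\lambda$ are bounded (away from the poles at the rhombus unit-vectors) and contribute only to the amplitude and to lower-order terms. As in \cite{Kenyon3}, I would choose the edge-path from $\yb$ to $\xb$ in $G$ to be as straight as possible, so that writing $\xb-\yb=Re^{i\phi}$ one has $\log\expo_{\xb,\yb}(\lambda)\sim R\cdot H(\lambda)$ for an explicit function $H$ built from the $\beta_j,\gamma_j$, whose critical points are $\lambda=\pm e^{i\phi}$ — exactly the two points where $\expo_{\xb,\yb}$ has its extremal modulus. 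The constant term $C_{x,y}$ plays no role since it is fixed (it only matters when $\xb=\yb$), and it can be dropped at the outset.

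Next I would deform the contour $\C_{x,y}$ so that it passes through the relevant saddle point. Since the pole-avoiding half-line $d_{x,y}$ is (in the generic case, cf.\ the footnote to Theorem~\ref{inverse}) oriented essentially along $\xb-\yb$, the branch of $\log\lambda$ is fixed so that the dominant saddle is the one where the contour can be routed; the standard computation in \cite{Kenyon3} shows that the leading contribution comes from a single simple saddle and produces a term of order $1/R=1/|\xb-\yb|$ rather than the naive $1/\sqrt R$, because of the cancellation of the Gaussian integral against the vanishing of the amplitude at the conjugate saddle — equivalently, after a residue/contour manipulation the answer is picked up as a residue of the $1/(\lambda\mp e^{i\phi})$-type singularity, giving an exact $1/(\xb-\yb)$ decay with an explicit constant. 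Concretely, I expect the leading coefficient to be, up to the universal factor $\frac{1}{2\pi}$, the evaluation of $f_x(\lambda)f_y(-\lambda)$ at the saddle $\lambda=\pm\frac{\xb-\yb}{|\xb-\yb|}$, combined with its complex conjugate coming from the other saddle, which is precisely what turns the product of the $f$'s into the imaginary part appearing in the statement.

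Then I would separate into the three cases according to the types of $x$ and $y$. When $x$ is of type `$w$' or `$z$', $f_x(\lambda)=\pm\frac{e^{i\alpha/2}}{e^{i\alpha}-\lambda}$, so at the saddle the residue extraction leaves a single factor $\epsilon_x e^{i\alpha/2}$ (the pole $e^{i\alpha}$ being separated from the saddle); when $x$ is of type `$v$', $f_{v_k(\xb)}=f_{w_k(\xb)}+f_{z_k(\xb)}$ contributes the difference $e^{i\alpha_1/2}-e^{i\alpha_2/2}$ with $\epsilon_{v}=1$. Doing the same for $y$ with $-\lambda$ in place of $\lambda$ (which, at the conjugate saddle, is responsible for the $\Im$ rather than $\Re$), and multiplying, one obtains exactly the three displayed formulas with the common prefactor $\frac{\epsilon_x\epsilon_y}{2\pi}$ and the denominator $\xb-\yb$. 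The error term $o(1/|\xb-\yb|)$ is the standard steepest-descent remainder once the subleading terms in $H$, in the amplitude, and in the approximation of the actual (nearly straight) edge-path by an exactly straight one are accounted for, using the uniform bound $\varepsilon\le\theta_e\le\frac{\pi}{2}-\varepsilon$ on the rhombus angles to control everything uniformly.

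The main obstacle I anticipate is bookkeeping the branch of $\log\lambda$ and the precise location of the avoided half-line $d_{x,y}$ relative to the two saddles $\pm e^{i\phi}$: one must check that deforming $\C_{x,y}$ through the correct saddle does not cross $d_{x,y}$ (nor the poles of $f_x,f_y$), and that the contribution of the branch cut, picked up when the saddle lies near the cut, is either absent or of lower order. In \cite{Kenyon3} this is handled by a careful choice of the half-line; here the extra complication is that $f_x,f_y$ carry their own poles on the unit circle, so I would verify that these poles always lie strictly on one side of the steepest-descent path for $|\xb-\yb|$ large — which holds because the path is, asymptotically, a circle through $\pm e^{i\phi}$ transverse to the unit circle, while the poles $e^{i\alpha},e^{i\alpha'},\dots$ sit at finitely many fixed angles distinct (generically) from $\phi$. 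Once this geometric point is settled, the rest is the routine Laplace-method computation.
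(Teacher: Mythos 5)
The proposal takes a genuinely different route from the paper, and I believe there is a real gap. The paper does \emph{not} run a steepest-descent argument through an interior saddle. Instead it exploits the $2\pi i$ jump of $\log\lambda$ across the ray $d_{x,y}$: deforming $\C_{x,y}$ to a keyhole around $d_{x,y}=\RR_+$ converts \eqref{invKast} into the real half-line integral $\frac{1}{2\pi i}\int_0^\infty f_x(\lambda)f_y(-\lambda)\expo_{\xb,\yb}(\lambda)\,\ud\lambda$. Along this ray $|\expo_{\xb,\yb}|$ is maximal at the two endpoints $0$ and $\infty$ (where it equals $1$) and decays exponentially in between, because in the rotated frame one has $\Re(\xb-\yb)<0$. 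The $1/(\xb-\yb)$ decay then comes from a Watson-lemma endpoint expansion ($\int_0^{\epsilon} e^{2\overline{(\xb-\yb)}\lambda}\,\ud\lambda \sim -\frac{1}{2\overline{(\xb-\yb)}}$, and similarly at $\infty$), and the $\Im(\cdot)$ arises because the amplitudes at the two endpoints are complex conjugates: $f_x(0)f_y(0)=\epsilon_x\epsilon_y e^{-i(\alpha+\alpha')/2}$, while $\lambda^2 f_x(\lambda)f_y(-\lambda)\to -\epsilon_x\epsilon_y e^{i(\alpha+\alpha')/2}$ as $\lambda\to\infty$.

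Your saddle points $\pm e^{i\phi}$, $\phi=\arg(\xb-\yb)$, are indeed critical points of the approximate phase $2\overline{(\xb-\yb)}\lambda+2(\xb-\yb)\lambda^{-1}$, but neither controls the asymptotics: on $d_{x,y}$ the saddle $-e^{i\phi}$ is a \emph{minimum} of $|\expo_{\xb,\yb}|$ (exponentially small), and the other saddle $+e^{i\phi}$ sits amid the accumulating poles $e^{i\beta_j}$ where $|\expo_{\xb,\yb}|$ blows up and the contour never goes. There is no actual singularity of the form $1/(\lambda\mp e^{i\phi})$ to take a residue of. Moreover, evaluating $f_x(\lambda)f_y(-\lambda)$ at $\lambda=\pm e^{i\phi}$ would give a leading amplitude depending on the direction $\phi$, which is inconsistent with the statement (the amplitude depends only on $\alpha,\alpha'$; the direction enters only through the overall $1/(\xb-\yb)$). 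The missing step is precisely the keyhole reduction via the $\log$ cut to a real integral dominated by its endpoints $0$ and $\infty$; once that is in place, the remaining Laplace computation is as routine as you say, but the interior-saddle/residue picture you describe is not the correct mechanism.
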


\begin{proof}
We follow the proof of Theorem $4.3$ of \cite{Kenyon3}. By Theorem \ref{inverse}, $K^{-1}_{x,y}$ is given by the integral:
\begin{equation}\label{eq:asymp}
K^{-1}_{x,y}=
\frac{1}{(2\pi)^2}
\int_{\C_{x,y}}f_{x}(\lambda)f_{y}(-\lambda)
\expo_{\xb,\yb}(\lambda)\log\lambda\ud\lambda+C_{x,y},
\end{equation}
where $\C_{x,y}$ is a simple closed curve oriented counterclockwise containing all poles of the integrand, and avoiding the half-line $d_{x,y}$ starting from $0$ in the direction from $\hat{\xb}$ to $\hat{\yb}$, where $\hat{\xb}$, $\hat{\yb}$ are vertices of $\GR$, at distance at most $2$ from $\xb$ and $\yb$, constructed in Section \ref{subsec73}. We choose $d_{x,y}$ to be the origin for the angles, {\em i.e.} the positive real axis $\RR_+$. The coefficient $C_{x,y}$ equals to $0$ whenever $x$ and $y$ are not in the same decoration. It will be a fortiori the case when they are far away from each other.

It suffices to handle the case where both $x$ and $y$ are of type `$w$' or `$z$'. The other cases are then derived using the relation between the functions $f$: 
\begin{equation*}
f_{v_k}(\lambda) = f_{w_k}(\lambda) + f_{z_k}(\lambda).
\end{equation*}
Define,
\begin{equation*}
  \xi = \xb -\yb = \sum_{j=1}^n e^{i\beta_j}+e^{i\gamma_j}.
\end{equation*}
where $e^{i\beta_1}, e^{i\gamma_1},\dots,e^{i\beta_n},e^{i\gamma_n}$ are the steps of a path in $\GR$ from $\yb$ to $\xb$. When $\xb$ and $\yb$ are far apart, since $\hat{\xb}$ and $\hat{\yb}$ are at distance at most $2$ from $\xb$ and $\yb$, the direction of $\overrightarrow{\xb\yb}$ is not very different from that of $\overrightarrow{\hat{\xb}\hat{\yb}}$, ensuring that $\Re(\xi) < 0 $.

The contour $\C_{x,y}$ can be deformed to a curve running counterclockwise around the ball of radius $R$ ($R$ large) around the origin from the angle $0$ to $2\pi$, then along the positive real axis, from $R$ to $r$ ($r$ small), then clockwise around the ball of radius $r$ from the angle $2\pi$ to $0$, and then back along the real axis from $r$ to $R$. The rational fraction $f_{x}(\lambda)f_{y}(-\lambda)
\expo_{\xb,\yb}(\lambda)$ behaves like $O(1)$ when $\lambda$ is small, and like $O\left(\frac{1}{\lambda^2}\right)$, when $\lambda$ is large. As a consequence, the contribution to the integral around the balls of radius $r$ and $R$ converges to $0$, as we let $r\rightarrow 0$ and $R\rightarrow \infty$. The logarithm differs by $2i\pi$ on the two sides of the ray $\RR_+$, the integral \eqref{eq:asymp} is therefore equal to:
\begin{equation*}
 K^{-1}_{x,y}=\frac{1}{2 i \pi}\int_{0}^{\infty}f_{x}(\lambda)f_{y}(-\lambda)
\expo_{\xb,\yb}(\lambda)\ud\lambda.
\end{equation*}

As in \cite{Kenyon3}, when $|\xb-\yb|$ is large, the main contribution to this integral comes from a neighborhood of the origin and a neighborhood of infinity. When $\lambda$ is small, we have:
\begin{equation*}
  \frac{e^{i\frac{\alpha}{2}}}{e^{i\alpha}-\lambda}=e^{-i\frac{\alpha}{2}+O(\lambda)}
  ,\quad\frac{e^{i\frac{\alpha'}{2}}}{e^{i\alpha}+\lambda}=e^{-i\frac{\alpha'}{2}+O(\lambda)},\quad \frac{e^{i\beta}+\lambda}{e^{i\beta}-\lambda}=\exp\left( 2e^{-i\beta}\lambda+O(\lambda^3) \right).
\end{equation*}
Thus, for small values of $\lambda$:
\begin{align*}
f_{x}(\lambda)f_{y}(-\lambda)\expo_{\xb,\yb}(\lambda) =\epsilon_x\epsilon_y
\frac{e^{i\frac{\alpha}{2}}}{e^{i\alpha} -\lambda} \frac{e^{i\frac{\alpha'}{2}}}{e^{i\alpha'}+\lambda} \prod_{j=1}^n \frac{e^{i\beta_j}+\lambda}{e^{i\beta_j}-\lambda}\frac{e^{i\gamma_j}+\lambda}{e^{i\gamma_j}-\lambda}\\
= \epsilon_x\epsilon_y e^{-i\frac{\alpha+\alpha'}{2}}\exp\left(2\overline{\xi}\lambda + O(\lambda) +O(n\lambda^3)\right).
\end{align*}

Integrating this estimate from $0$ to $\frac{1}{\sqrt{n}}$ gives
\begin{align*}
\int_0^{\frac{1}{\sqrt{n}}} f_x(\lambda) f_y(-\lambda) \expo_{\xb,\yb}(\lambda) \ud \lambda &= \epsilon_x\epsilon_y\int_{0}^{\frac{1}{\sqrt{n}}} e^{-i\frac{\alpha+\alpha'}{2}} \exp\left( 2\bar{\xi}\lambda +O(n^{-\frac{1}{2}}) \right) \ud \lambda \\
  &= -\epsilon_x\epsilon_y\frac{e^{-i\frac{\alpha+\alpha'}{2}}}{2\bar{\xi}}\left( 1+O\left(\frac{1}{\sqrt{n}}\right) \right).
\end{align*}

Similarly, when $\lambda$ is large, we have:
\begin{equation*}
\frac{e^{i\alpha}}{e^{i\alpha}-\lambda}= -\frac{e^{i\frac{\alpha}{2}+O(\lambda^{-1})}}{\lambda}, 
\quad \frac{e^{i\alpha'}}{e^{i\alpha'}+\lambda} = \frac{e^{i\frac{\alpha'}{2}+O(\lambda^{-1})}}{\lambda},\quad \frac{e^{i\beta}+\lambda}{e^{i\beta}-\lambda}=
-\exp\left( 2 e^{i\beta}\lambda^{-1}+O(\lambda^{-3}) \right).
\end{equation*}
Thus for large values of $\lambda$,
\begin{align*}
&f_{x}(\lambda)f_{y}(-\lambda)\expo_{\xb,\yb}(\lambda)=-\epsilon_x\epsilon_y\frac{e^{i\frac{\alpha+\alpha'}{2}}}{\lambda^2}\exp\left(2\xi\lambda^{-1} +O(\lambda^{-1})+O(n\lambda^{-3})\right).
\end{align*}

Computing the integral of this estimate for $\lambda\in (\sqrt{n},\infty)$ gives
\begin{equation*}
\int_{\sqrt{n}}^{\infty} f_x(\lambda)f_y(-\lambda) \expo_{\xb,\yb}(\lambda) \ud \lambda =\epsilon_x\epsilon_y \frac{e^{i\frac{\alpha+\alpha'}{2}}}{2\xi}\left( 1+O\left( \frac{1}{\sqrt{n}} \right) \right).
\end{equation*}
The rest of the integral between $n^{-\frac{1}{2}}$ and $n^{\frac{1}{2}}$ is negligible (see \cite{Kenyon3}). As a consequence,
\begin{equation*}
K^{-1}_{x,y} = \frac{\epsilon_x\epsilon_y}{2 i \pi}\left( \frac{e^{i\frac{\alpha+\alpha'}{2}}}{2\xi}\left( 1+o(1) \right)-\frac{e^{-i\frac{\alpha+\alpha'}{2}}}{2\bar{\xi}}\left( 1+o(1) \right)\right) = \frac{\epsilon_x\epsilon_y}{2\pi}\Im\left( \frac{e^{i\frac{\alpha+\alpha'}{2}}}{\xb-\yb} \right)+ o\left( \frac{1}{|\xb-\yb|} \right).
\end{equation*}
\end{proof}

\begin{rem}
Using the same method with an expansion of the integrand to a higher order would lead to a more precise asymptotic expansion of $K^{-1}_{x,y}$.
\end{rem}

\section{Critical dimer model on infinite Fisher graphs}

Let $\GD$ be an infinite Fisher graph obtained from an infinite isoradial graph $G$. Assume that edges of $\GD$ are assigned the dimer critical weight function $\nu$. In this section, we give a full description of the critical dimer model on the Fisher graph $\GD$, consisting of explicit expressions which only depend on the local geometry of the underlying isoradial graph $G$. More precisely, in Section \ref{sec:measure}, using the method of \cite{Bea1}, we give an explicit local formula for a Gibbs measure on dimer configurations of $\GD$, involving the inverse Kasteleyn matrix of Theorem \ref{inverse}. When the graph is periodic, this measure coincides with the Gibbs measure of \cite{isoising1}, obtained as weak limit of Boltzmann measures on a natural toroidal exhaustion. Then, in Section \ref{sec:free_energy}, we assume that the graph $\GD$ is periodic and, using the method of \cite{Kenyon3}, we give an explicit local formula for the free energy of the critical dimer model on $\GD$. As a corollary, we obtain Baxter's celebrated formula for the free energy of the critical $Z$-invariant Ising model.

\subsection{Local formula for the critical dimer Gibbs measure}
\label{sec:measure}

A {\em Gibbs measure} on the set of dimer configurations $\M(\GD)$ of $\GD$, is a probability measure on $\M(\GD)$, which satisfies the following. If one fixes a perfect matching in an annular region of $\GD$, then perfect matchings inside and outside of this annulus are independent. Moreover, the probability of occurrence of an interior matching is proportional to the product of the edge weights.

In order to state Theorem \ref{thm:measure}, we need the following definition. Let $\mathcal{E}$ be a finite subset of edges of $\GD$. The \emph{cylinder set} $A_{\mathcal{E}}$ is defined to be the set of dimer configurations of $\GD$ containing the subset of edges $\mathcal{E}$; it is often convenient to identify $\mathcal{E}$ with $A_{\mathcal{E}}$. Let $\mathcal{F}$ be the $\sigma$-algebra of $\mathcal{M}(\GD)$ generated by the cylinder sets $\bigr(A_{\mathcal{E}}\bigr)_{\mathcal{E}\text{ finite}}$. Recall that $K$ denotes the infinite Kasteleyn matrix of the graph $\GD$, and let $K^{-1}$ be the matrix inverse given by Theorem \ref{inverse}. 

\begin{thm}\label{thm:measure}
There is a unique probability measure $\P$ on $(\M(\GD),\F)$, such that for every finite collection of edges $ \mathcal{E}=\{e_1=x_1 y_1,\cdots,e_k=x_k y_k\}\subset E(\GD)$, the probability of the corresponding cylinder set is
\begin{equation}\label{eq:Gibbs}
\P(A_{\mathcal{E}})=\P(e_1,\cdots,e_k)=\left(\prod_{i=1}^k K_{x_i,y_i} \right)\Pf\bigl(\,^t(K^{-1})_{\{x_1,y_1,\cdots,x_k,y_k\}}\bigr),
\end{equation}
where $K^{-1}$ is given by Theorem \ref{inverse}, and $(K^{-1})_{\{x_1,y_1,\cdots,x_k,y_k\}}$ is the sub-matrix of $K^{-1}$ whose rows and columns are indexed by vertices $\{x_1,y_1,\cdots,x_k,y_k\}$.

Moreover, $\P$ is a Gibbs measure. When $\GD$ is $\ZZ^2$-periodic, $\P$ is the Gibbs measure obtained as weak limit of the Boltzmann measures $\P_n$ on the toroidal exhaustion $\{\GD_n=\GD/(n\ZZ^2)\}_{n\ge 1}$ of  $\mathcal{G}$.
\end{thm}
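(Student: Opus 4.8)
The plan is to establish Theorem \ref{thm:measure} in three stages: first, produce a candidate set function on cylinders via formula \eqref{eq:Gibbs}; second, verify it extends to a genuine probability measure on $(\M(\GD),\F)$; third, identify it as a Gibbs measure and, in the periodic case, as the weak limit of toroidal Boltzmann measures. For the first stage, I would follow the method of \cite{Bea1}: fix an exhaustion of $\GD$ by finite subgraphs $\GD^{(m)}$, each obtained by taking a large piece of $\GD$ together with enough of the decorations to keep all triangles intact, and equipped with a Kasteleyn orientation agreeing with the fixed one on $\GD$. On each $\GD^{(m)}$ the Boltzmann measure $\P^{(m)}$ assigns to a cylinder $A_\mathcal{E}$ (with $\mathcal{E}\subset\GD^{(m)}$) the value $\bigl(\prod_{i=1}^k K_{x_i,y_i}\bigr)\Pf\bigl(\,^t(K_m^{-1})_{\{x_1,y_1,\dots,x_k,y_k\}}\bigr)$, where $K_m^{-1}$ is the inverse of the finite Kasteleyn matrix $K_m$; this is Kenyon's local statistics formula for dimers, valid because the edge-probabilities are minors of the inverse Kasteleyn matrix. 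The key analytic input is then that $(K_m^{-1})_{x,y}\to K^{-1}_{x,y}$ as $m\to\infty$, with $K^{-1}$ the matrix of Theorem \ref{inverse}. Because only finitely many entries enter each Pfaffian, convergence of the entries forces $\P^{(m)}(A_\mathcal{E})\to\bigl(\prod_i K_{x_i,y_i}\bigr)\Pf\bigl(\,^t(K^{-1})_{\{x_1,\dots,y_k\}}\bigr)$, which is exactly the right-hand side of \eqref{eq:Gibbs}.

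For the second stage, I would observe that the limiting set function is nonnegative (being a limit of probabilities) and satisfies the Kolmogorov consistency relations: for any edge $e$ not in $\mathcal{E}$ and incident to a vertex $v$ of $\mathcal{E}$, one has $\P(\mathcal{E})=\sum_{e'\ni v}\P(\mathcal{E}\cup\{e'\})$ summed over the (finitely many) edges at $v$, and more generally $A_\mathcal{E}$ is the disjoint union of the $A_{\mathcal{E}\cup\{e'\}}$ as $e'$ ranges over edges at some uncovered vertex. Each such finite-additivity identity passes to the limit from the corresponding exact identity for the $\P^{(m)}$. Since $\GD$ has vertices of degree $3$, these relations are the standard combinatorial constraints on dimer cylinder probabilities, and by a Carathéodory/Kolmogorov extension argument (as in \cite{Bea1}) they guarantee that $\P$ extends uniquely to a probability measure on $\F$. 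Uniqueness is immediate because the cylinder sets generate $\F$ and are a $\pi$-system on which $\P$ is prescribed.

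For the third stage, the Gibbs property is a local statement: given a fixed matching in an annular region $\mathcal{A}$, the conditional law inside versus outside factorizes. This follows from the Pfaffian structure of \eqref{eq:Gibbs} together with the \emph{locality} of $K^{-1}$ recorded in Remark \ref{rem:localformula}: conditioning on edges in $\mathcal{A}$ amounts to working with the Kasteleyn matrix of the graph cut along $\mathcal{A}$, and the inverse Kasteleyn matrix restricted to the inside (resp. outside) depends only on the inside (resp. outside) geometry, so the inside and outside conditional measures are mutually independent and each proportional to products of edge weights --- exactly the Gibbs condition. Finally, when $\GD$ is $\ZZ^2$-periodic, \cite{isoising1} produces a Gibbs measure as weak limit of the Boltzmann measures $\P_n$ on the toroidal exhaustion $\GD_n=\GD/n\ZZ^2$; by Proposition $5$ of \cite{isoising1} the corresponding inverse Kasteleyn matrix is unique, hence coincides with the $K^{-1}$ of Theorem \ref{inverse}, and therefore the cylinder probabilities of $\lim_n\P_n$ agree with \eqref{eq:Gibbs}, giving $\lim_n\P_n=\P$.

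I expect the main obstacle to be the convergence $(K_m^{-1})_{x,y}\to K^{-1}_{x,y}$: one must choose the finite approximations $\GD^{(m)}$ and their Kasteleyn orientations coherently, control boundary effects, and show that the boundary contributions to the finite inverse Kasteleyn matrix vanish in the limit --- this is where the asymptotic decay of $K^{-1}$ from Corollary \ref{cor:asymptotics} and an argument in the spirit of \cite{Kenyon3,Bea1} are needed. The purely measure-theoretic extension and the verification of the Gibbs property are then comparatively routine, resting on the locality already proved in Remark \ref{rem:localformula}.
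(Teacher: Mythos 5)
The central difficulty you flag at the end --- proving $(K_m^{-1})_{x,y}\to K^{-1}_{x,y}$ for an exhaustion by finite subgraphs --- is not a technical loose end; it is precisely what the paper's argument is designed to \emph{avoid}, and without it your first stage does not go through. In a non-periodic, non-bipartite setting there is no ready-made control on boundary effects for finite-graph Kasteleyn inverses, and the asymptotic decay of Corollary \ref{cor:asymptotics} alone does not give it: that corollary concerns the infinite-graph $K^{-1}$, not the difference $K_m^{-1}-K^{-1}$. The paper sidesteps this entirely by a periodization trick: given a finite edge set $\mathcal{E}$, one invokes Proposition~1 of \cite{Bea1} to build a $\ZZ^2$-periodic isoradial graph $G^p$ whose diamond graph agrees with $\GR$ on a large ball around $\mathcal{E}$. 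By the \emph{locality} of Remark \ref{rem:localformula}, $K^{-1}$ and $(K^p)^{-1}$ have identical entries on that ball; by Corollary~\ref{cor:asymptotics} and Proposition~5 of \cite{isoising1}, $(K^p)^{-1}$ is the Fourier-computed inverse of \cite{isoising1}; and by Theorem~6 of \cite{isoising1}, the resulting Pfaffian formula is the cylinder probability of an actual Gibbs measure $\mathcal{P}^p$ on $\M(\GD^p)$, obtained as weak limit of toroidal Boltzmann measures. This shows directly that the formula \eqref{eq:Gibbs} defines a probability measure on $\mathcal{F}_\mathcal{E}$, without any finite-volume convergence argument.

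The same point undermines your second and third stages as written. You propose to derive nonnegativity and Kolmogorov consistency by ``passing to the limit from the $\P^{(m)}$,'' but that again presupposes the convergence. The paper instead observes that any finite family of the $\mathcal{P}_\mathcal{E}$ are marginals of a single $\mathcal{P}^p$ on a suitably large periodic graph, so consistency is inherited from genuine measures, not obtained by a limiting argument. Likewise, the Gibbs property does not need the somewhat delicate ``cut along the annulus and restrict $K^{-1}$'' argument you sketch (which would require justification that the restricted inverse really is the inverse of the cut graph's Kasteleyn matrix, something locality alone does not give): it simply transfers from the Gibbs property of $\mathcal{P}^p$. Your handling of the periodic case is correct and matches the paper. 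In short: the ingredients you list (Kolmogorov extension, locality, Proposition~5 of \cite{isoising1}) are the right ones, but the missing idea is to use locality to embed each finite computation into a \emph{periodic infinite} graph rather than into a \emph{finite} graph, which removes the hard convergence problem you identified.
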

\begin{proof}
The idea is to use Kolmogorov's extension theorem. The structure of the proof is taken from \cite{Bea1}. Let $\mathcal{E}=\bigl\{e_1=x_1 y_1,\dots e_n = x_n y_n\bigr\}$ be a finite subset of edges of $\GD$. Denote by $\mathcal{F}_{\mathcal{E}}$ the $\sigma$-algebra generated by the cylinders $\bigl(A_{\mathcal{E}'}\bigr)_{\mathcal{E}'\subset\mathcal{E}}$.
We define an additive function $\mathcal{P}_{\mathcal{E}}$ on $\mathcal{F}_{\mathcal{E}}$, by giving its value on every cylinder set $A_{\mathcal{E'}}$, with $\mathcal{E}'=\{e_{i_1},\dots,e_{i_k}\} \subset \mathcal{E}$: 
\begin{equation*}
  \mathcal{P}_{\mathcal{E}}(A_{\mathcal{E'}}) = \left(\prod_{j=1}^k K_{x_{i_j},y_{i_j}} \right)\Pf\bigl({}^t(K^{-1})_{\{x_{i_1},y_{i_1},\cdots,x_{i_k},y_{i_k}\}}\bigr).
\end{equation*}
It is a priori not obvious that $\P_{\E}$ defines a probability measure on $\F_\E$. Let us show that this is indeed the case. Let $V_{\mathcal{E}}$ be the subset of vertices of $G$ to which the decorations containing $x_1, y_1, \dots, x_n, y_n$ retract. Define $Q$ to be a simply-connected subset of rhombi of $\GR$ containing all rhombi adjacent to vertices of 
$V_{\mathcal{E}}$. 

By Proposition 1 of \cite{Bea1}, there exists a $\ZZ^2$-periodic rhombus tiling of the plane containing $Q$, which we denote by ${\GR}^p$. Moreover, the rhombus tiling ${\GR}^p$ is the diamond graph of a unique isoradial graph $G^{p}$ whose vertices contains the subset ${V}_{\mathcal{E}}$. Let $\GD^{p}$ be the Fisher graph of $G^p$, then the graphs $\GD$ and $\GD^{p}$ coincide on a ball $B$ containing $x_1, y_1, \dots, x_n, y_n$.

Endow edges of $\GD^{p}$ with the critical weights and a periodic Kasteleyn orientation, coinciding with the orientation of the edges of $\GD$ on the common ball $B$.
We know by Remark \ref{rem:localformula}, that the coefficients of the inverses $(K^{p})^{-1}$ and $K^{-1}$ given by Theorem \ref{inverse} are equal for all pairs of vertices in $B$.

On the other hand, we know by Corollary \ref{cor:asymptotics} that the coefficient $(K^p)^{-1}_{x,y}\rightarrow 0$, as $|\xb-\yb|\rightarrow \infty$. By Proposition 5 of \cite{isoising1}, stating uniqueness of the inverse Kasteleyn matrix decreasing at infinity in the periodic case, we deduce that $(K^{p})^{-1}$ is in fact the inverse computed in \cite{isoising1} by Fourier transform. In Theorem $6$ of \cite{isoising1}, we use the inverse $(K^{p})^{-1}$ to construct the Gibbs measure $\mathcal{P}^p$ on $\mathcal{M}(\mathcal{G}^p)$ obtained as weak limit of Boltzmann measures on the natural toroidal exhaustion of $\GD^p$, which has the following explicit expression: let $\{e_1'=x_1' y_1',\cdots,e_k'=x_k'y_k'\}$ be a subset of edges of $\GD^p$, then
\begin{equation*}
  \mathcal{P}^p(e'_1,\dots e'_k)= \left(\prod_{j=1}^{k} K^p_{x'_j y'_j} \right) \Pf \bigl({}^t(K^p)^{-1}_{\{x'_1,\dots,y'_k\}}\bigr).
\end{equation*}

In particular, the expression of the restriction of $\mathcal{P}^p$ to events involving edges in $\mathcal{E}$ (seen as edges of $\GD^p$) is equal to the formula defining $\mathcal{P}_{\mathcal{E}}$. As a consequence, $\mathcal{P}_{\mathcal{E}}$ is a probability measure on $\mathcal{F}_\mathcal{E}$.

The fact that Kolmogorov's consistency relations are satisfied by the collection of probability measures $(\mathcal{P}_\mathcal{E})_{\mathcal{E}}$ is immediate once we notice that any finite number of these probability measures can be interpreted as finite-dimensional marginals of a Gibbs measure on dimer configurations of a periodic graph with a large enough fundamental domain. The measure $\P$ of Theorem \ref{thm:measure} is thus the one given by Kolmogorov's extension theorem applied to the collection $\left( \mathcal{P}_{\mathcal{E}} \right)_{\mathcal{E}}$. The Gibbs property follows from the Gibbs property of the probability measures $\P^p$.
\end{proof}

The probability of single edges are computed in details in Appendix~\ref{app:calculs}, using the explicit form for $K^{-1}$ given by Theorem~\ref{inverse}. 
Consider a rhombus of $\GR$ with vertices $\xb,\mathbf{t},\yb,\mathbf{u}$: $\xb$ and $\yb$ are vertices of $G$; $\mathbf{t}$ and $\mathbf{u}$ are vertices of $G^*$. Let $\theta$ be the half-angle of this rhombus, measured at $\xb$.
In $\GD$, we have that if $v=v_k(\xb)$ and $v'=v_\ell(\yb)$ are end points of the  edge $e$ coming from $\xb\yb$ in the decoration process, then we have
\begin{equation}
  \P(e) =  K_{v,v'} K^{-1}_{v',v}  =\frac{1}{2}+\frac{\pi-2\theta}{2\pi \cos\theta}.
  \label{eq:prob_vkvl}
\end{equation}
In Fisher's correspondence, the presence of the edge $e$ in the dimer configuration corresponds to the fact that $\xb\yb$ is  not covered by a piece of contour. In the low temperature expansion of the Ising model, this thus means that the two spins at $\mathbf{t}$ and $\mathbf{u}$ have the same sign. Since they can be both $+$ or $-$, the probability that they are both $+$, as a function of $\phi=\frac{\pi}{2}-\theta$, the angle of the rhombus measured at $\mathbf{u}$, is:
\begin{equation*}
  \P\left(\begin{array}{c}\includegraphics[width=20mm]{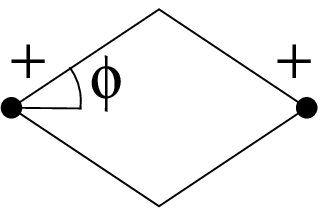}\end{array}\right)= \frac{1}{2}\left( \frac{1}{2} + \frac{\pi-2\theta}{2\pi\cos\theta} \right) = \frac{1}{4} + \frac{\phi}{2\pi\sin\phi}.
\end{equation*}

\subsection{Free energy of the critical dimer model}\label{sec:free_energy}

In this section, we suppose that the isoradial graph $G$ is periodic, so that the corresponding Fisher graph $\GD$ is. A natural exhaustion of $\GD$ by toroidal graphs is given by $\{\GD_n\}_{n\geq 1}$, where $\GD_n=\GD/n\ZZ^2$, and similarly for $G$. The graphs $\GD_1$ and $G_1$ are known as the {\em fundamental domains} of $\GD$ and $G$ respectively. In order to shorten notations in Theorem \ref{thm:free_energy} below and in the proof, we write $|E_n|~=~|E(G_n)|$, and $|V_n|~=~|V(G_n)|$.

The {\em free energy per fundamental domain} of the critical dimer model on the Fisher graph $\GD$ is denoted $f_D$, and is defined by:
\begin{equation*}
f_D=-\lim_{n\rightarrow\infty}\frac{1}{n^2}\log \Z_n^\nu,
\end{equation*}
where $\Z_n^\nu$ is the dimer partition function of the graph $\GD_n$, whose edges are assigned the critical weight function $\nu$.

\begin{thm}\label{thm:free_energy}
The free energy per fundamental domain of the critical dimer model on the Fisher graph $\GD$ is given by:
\begin{align*}
f_D=-(|E_1|+&|V_1|)\frac{\log 2}{2}+\\
&+\sum_{e\in E(G_1)}\left[\frac{\pi-2\theta_e}{2\pi}\log \tan\theta_e-\frac{1}{2}\log\cot\frac{\theta_e}{2} -\frac{1}{\pi}\left(L(\theta_e)+L\left(\frac{\pi}{2}-\theta_e\right)\right)\right],
\end{align*}
where $L$ is the Lobachevsky function, $L(x)=-\int_{0}^{x}\log 2\sin t\,\ud t$.
\end{thm}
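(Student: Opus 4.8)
The plan is to express $f_D$ as a contour/Fourier integral coming from the periodic structure, and then to reduce that integral edge by edge to the Lobachevsky terms using the critical weights $\nu(\theta_e)=\cot(\theta_e/2)$. First I would pass to the toroidal exhaustion $\GD_n=\GD/n\ZZ^2$ and use the standard fact that, for a $\ZZ^2$-periodic weighted bipartite-or-Pfaffian dimer model, $\Z_n^\nu$ is (up to a bounded factor and signs) a product over $n$-th roots of unity of $\det K(z,w)$, where $K(z,w)$ is the Fourier-transformed Kasteleyn matrix on the fundamental domain $\GD_1$; hence
\begin{equation*}
f_D=-\frac{1}{(2\pi i)^2}\oint\oint \log\bigl|\det K(z,w)\bigr|\,\frac{\ud z}{z}\frac{\ud w}{w}+(\text{elementary contributions}).
\end{equation*}
This is exactly the route of \cite{Kenyon3}, so I may cite it for the reduction of the limit to the double integral.

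Next I would compute $\det K(z,w)$ explicitly for the Fisher graph. Contracting each triangle of a decoration to a point (as in Section~\ref{subsec41}) turns the computation into one on the graph $G$ itself: each decoration contributes a local $3d(\xb)\times 3d(\xb)$ block whose determinant, after expansion, factors through the edge weights $\cot(\theta_e/2)$, and the ``$G$-part'' of $\det K(z,w)$ is, up to explicit local prefactors, a power of the characteristic polynomial $P(z,w)$ of the critical Ising operator on $G$ — equivalently, by the identification announced in Corollary~\ref{cor:cst}, a constant times the characteristic polynomial of the critical Laplacian on $G$. Concretely I expect an identity of the shape
\begin{equation*}
\log\bigl|\det K(z,w)\bigr|=\sum_{e\in E(G_1)}\log g(\theta_e)+\log\bigl|P(z,w)\bigr|
\end{equation*}
for an explicit elementary function $g$ built from $\cot(\theta_e/2)$, $\cos\theta_e$, $2$, etc., with the $z,w$-dependence confined to $P$. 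The double integral of $\log|P(z,w)|$ is then evaluated by the same argument as in \cite{Kenyon3}: expand $\log|P|$ as a convergent series in the off-diagonal part, integrate term by term against $\frac{\ud z}{z}\frac{\ud w}{w}$ so that only the ``balanced'' terms survive, resum, and recognize the result as $\sum_{e\in E(G_1)}\frac{1}{\pi}\bigl(L(\theta_e)+L(\tfrac{\pi}{2}-\theta_e)\bigr)$ up to an explicit $\theta_e$-linear and constant part. Collecting the per-edge constants $\log g(\theta_e)$ with the contribution from the integral, and matching $|E_1|$ and $|V_1|$ against the number of vertices of $\GD_1$ (which is $3\cdot 2|E_1|=6|E_1|$, with the $\log 2/2$ coefficients bookkeeping the $2^{|V(G)|}$ and the decoration factors), should assemble exactly the stated formula.

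The main obstacle is the second step: correctly isolating the local edge-prefactor $g(\theta_e)$ from $\det K(z,w)$ for the non-bipartite, non-isoradial Fisher graph. One must choose a good basis for the contracted-triangle reduction, keep track of the signs coming from the Kasteleyn orientation (the $\RR/4\pi\ZZ$ angle conventions of Section~\ref{subsec53} are what make this consistent), and verify that all $z,w$-dependence really does collapse onto a single factor proportional to the critical characteristic polynomial — this is where Corollary~\ref{cor:cst} (and hence the local formula of Theorem~\ref{inverse}, via the relation between $K^{-1}$ and the discrete exponential) does the heavy lifting. Once that factorization is in hand, the Lobachevsky integral is routine Fourier analysis following \cite{Kenyon3}, and the bookkeeping of the $\frac{\log 2}{2}$ terms and the final cosmetic simplifications are straightforward. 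Finally, combining $f_D$ with Fisher's partition-function identity \eqref{eq:Z-fisher}, namely $f_I=f_D-\sum_{e\in E(G_1)}\log\sinh J_e$ together with \eqref{eq:critical_coupling}, and simplifying $\log\sinh J(\theta_e)$, $\log\cot(\theta_e/2)$, $\log\cos\theta_e$ against the per-edge constants, yields Baxter's formula of Theorem~\ref{thm:baxter_intro} as the stated corollary.
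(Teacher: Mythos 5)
Your proposal takes a genuinely different route from the paper, and it has a circularity problem that would need to be resolved before it could work.

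The paper does \emph{not} compute $\det\hat K(z,w)$ explicitly and factor out the $\theta_e$-dependence; it instead follows Kenyon's variational method. Starting from the double-integral formula for $f_D$ (from \cite{isoising1}), it differentiates $f_D$ with respect to a train-track tilting angle $\alpha$, observes that $\frac{\ud f_D}{\ud\alpha}=-\sum_{e\in E(G_1)}\mathcal P(e)\,\frac{\ud\log\nu_e}{\ud\alpha}$ involves only the \emph{local} edge probability $\mathcal P(\theta)=\frac12+\frac{\pi-2\theta}{2\pi\cos\theta}$ (computed in the Appendix from Theorem~\ref{inverse}), finds a per-edge primitive $\mathsf f(\theta)$, and integrates from a degenerate ``flat'' embedding. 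Because $\mathsf f$ diverges as $\theta\to0$, the paper passes to the entropy $s_D=-f_D-\sum_e\mathcal P(e)\log\nu_e$, evaluates $s_D(\GD^{\text{flat}})=|V_1|\log2$ by a combinatorial count of the frozen/free edges, and recovers $f_D$. Your proposal does not touch this variational machinery, nor the divergence-at-degeneracy issue and the entropy workaround; instead it proposes to extract the per-edge prefactor $g(\theta_e)$ from $\det\hat K(z,w)$ by a direct triangle-contraction calculation and to write $\log|\det\hat K(z,w)|$ as that prefactor plus $\log|P_\Delta(z,w)|$. That factorization \emph{is} exactly the constant $c$ in Corollary~\ref{cor:cst}.

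Here is the gap: in the paper, Corollary~\ref{cor:cst} is a \emph{consequence} of Theorem~\ref{thm:free_energy}. The constant $c$ is obtained from $\log c=-2f_D-\iint\log P_\Delta$, i.e.\ it presupposes the very formula you are trying to prove, combined with Kenyon's evaluation of the Laplacian integral. So invoking Corollary~\ref{cor:cst} ``to do the heavy lifting'' of identifying the edge prefactor makes the argument circular. To salvage your route, you would have to establish the factorization $\det\hat K(z,w)=\bigl(\text{explicit local constant}\bigr)\cdot P_\Delta(z,w)$ directly, by carrying out the linear algebra on the $3d(\xb)$-sized decoration blocks of $\hat K(z,w)$, which is a nontrivial computation you only gesture at as ``the main obstacle.'' (What is available without circularity is Theorem~8 of \cite{isoising1}: the two characteristic polynomials are proportional, with \emph{unknown} constant; that alone does not let you evaluate $\iint\log|\det\hat K|$.) A further, more minor inaccuracy: Kenyon's evaluation of $\iint\log P_\Delta$ in \cite{Kenyon3} is itself the tilting/variational argument, not the series-expansion-of-$\log|P|$ you describe; you would simply quote his closed form, which is legitimate but is a different mechanism than you sketch. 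Finally, your last paragraph about deriving Baxter's formula from $f_D$ is correct but is the content of the separate Theorem~\ref{thm:baxter_intro}, not of this statement.
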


\begin{proof}
For the proof, we follow the argument given by Kenyon~\cite{Kenyon3} to compute the normalized determinant of the Laplacian and the Dirac operator on isoradial graphs. See also \cite{Bea} for a detailed computation of the free energy of dimer models on isoradial bipartite graphs. 

In \cite{isoising1}, we proved that 
\begin{equation*}
  f_D = -\frac{1}{2} \iint_{\TT^2}\log \det \hat{K}(z,w) \frac{\ud z}{2i\pi z}\frac{\ud w}{2i\pi w},
\end{equation*}
where $\hat{K}(z,w)$ is the Fourier transform of the infinite periodic Kasteleyn matrix of the critical dimer model on $\mathcal{G}$.
 
We need the following definition of \cite{KeSchlenk}. Recall that $G^{\diamond}$ is the diamond graph associated to the isoradial graph $G$. A {\em train-track} of $G^{\diamond}$ is a path of
edge-adjacent rhombi of $G^{\diamond}$, which does not turn:
on entering a face, it exits along the opposite edge. As a consequence, each
rhombus in a train-track has an edge parallel to a fixed unit vector. We assume that each
train-track is extended as far as possible in both direction, so that it is a
bi-infinite path. 
 
The idea of the proof is to first understand how the free energy $f_D$ is changed when the isoradial embedding of the graph $G$ is modified by tilting a family of parallel train-tracks. If we can compute the free energy in an extreme situation, that is when all the rhombi are flat, \emph{i.e} with half-angles equal to $0$ or $\pi/2$, then we can compute the free energy of the initial graph by integrating the variation of the free energy along the deformation from the trivial flat embedding back to the initial graph, by tilting all the families of train-tracks successively until all rhombi recover their original shape.
Let $G^\text{flat}$ be the trivial flat isoradial graph, and let $\GD^\text{flat}$ be the corresponding Fisher graph.

Consider a train-track $T$ of $\GR$. All the rhombi of the train-track have a common parallel, with  an angle equal to $\alpha$. Let us compute the derivative of $f_D$ as we tilt the train-track $T$ as well as all its copies.
The function  $\log\det\hat{K}(z,w)$ is integrable over the unit torus, is a differentiable function of $\alpha$ for all $(z,w)\in \TT^2 \setminus \{1,1\}$, and the derivative is also uniformly integrable on the torus. The dimer free energy $f_D$ is thus differentiable with respect to $\alpha$, and
\begin{align*}
    \frac{\ud f_D}{\ud \alpha}&=-\frac{1}{2} \iint_{\TT^2}\frac{\ud}{\ud \alpha}\log\det\hat{K}(z,w) \frac{\ud z}{2i\pi z}\frac{\ud w}{2i\pi w}\\
    &= -\frac{1}{2}\sum_{u,v\in V(\mathcal{G}_1)} \iint_{\TT^2} \hat{K}^{-1}(z,w)_{v,u}\frac{\ud \hat{K}(z,w)_{u,v}}{\ud \alpha} \frac{\ud  z}{2i\pi z}\frac{\ud w}{2i\pi w}\\
&= - \sum_{e=uv\in E(\GD_1)} K^{-1}_{v,u}\frac{\ud K_{u,v}}{\ud \alpha} =  -\sum_{e\in E(G_1)} \mathcal{P}(e) \frac{\ud \log \nu_e}{\ud \alpha}.
\end{align*}
In the last line, we used the uniqueness of the inverse Kasteleyn matrix whose coefficients decrease at infinity, given by Proposition 5 of \cite{isoising1}. We also used the fact that if $e=uv$, then $\nu_e = |K_{u,v}|$ and  $\mathcal{P}(e)= K_{u,v} K^{-1}_{v,u}$. Note that the sum is restricted to edges coming from $E(G_1)$, because they are the only edges of $\GD_1$ with a weight depending on an angle.

There is in this sum a term for every rhombus in the fundamental domain $G_1$, and that term only depends on the half-angle of this rhombus.
The variation of the free energy of the dimer model along the deformation from $\GD^{\text{flat}}$ to $\GD$ is thus, up to an additive constant, the sum over all rhombi in a fundamental domain of the variation of a function $\mathsf{f}(\theta)$ depending only on the geometry  of the rhombus:
\begin{equation*}
\Delta f_D = f_D(\GD) - f_D(\GD^{\text{flat}}) =  \sum_{e\in E(G_1)} \mathsf{f}(\theta_e) - \mathsf{f}(\theta_e^{\text{flat}}),
\end{equation*}
with 
\begin{equation*}
  \frac{\ud \mathsf{f}(\theta)}{\ud\theta} = - \P(\theta) \frac{\ud \log \nu(\theta)}{\ud \theta}, 
\end{equation*}
where $\P(\theta)$ is the probability of occurrence in a dimer configuration of the edge of $\GD$ coming from an edge of $G$, whose half rhombus-angle is $\theta$; and $\theta_e^{\text{flat}}$ is the angle, equal to $0$ or $\frac{\pi}{2}$, of the degenerate rhombus in $G^{\text{flat}}$ associated to the edge $e$.

Recalling that by definition of $\nu$, and by Equation \eqref{eq:prob_vkvl}:
\begin{equation*}
\nu(\theta)=\cot\frac{\theta}{2}, \quad \P(\theta)=\frac{1}{2}+\frac{\pi-2\theta}{2\pi \cos\theta}, 
\end{equation*}
and using the fact that $\frac{\ud}{\ud \theta} \log\cot \frac{\theta}{2}= -\frac{1}{\sin\theta}$, we deduce an explicit 
formula for the derivative of $\mathsf{f}(\theta)$:
\begin{align*}
\frac{\ud \mathsf{f}(\theta)}{\ud \theta}& =\left( \frac{1}{2}+\frac{\pi-2\theta}{2\pi\cos \theta} \right) \frac{1}{\sin\theta}=\frac{1}{2\sin\theta}+\frac{\pi-2\theta}{\pi\sin 2\theta}.
\end{align*}
Using integration by parts, a primitive of $\frac{\ud \mathsf{f}(\theta)}{\ud \theta}$ is given by:
\begin{align*}
\fs(\theta)
&=\frac{1}{2}\log\tan\frac{\theta}{2}+\frac{\pi-2\theta}{2\pi}\log\tan\theta-\frac{1}{\pi}\left(L(\theta)+L\left(\frac{\pi}{2}-\theta\right)\right).
\end{align*}
The problem is that  $\mathsf{f}(\theta)$ goes to $-\infty$ when $\theta$ approaches $0$, and thus the free energy diverges when the graph $G$ becomes flat. One can instead study $s_D$, the \emph{entropy} of the corresponding dimer model, defined by
\begin{equation}
  s_D = -f_D -\sum_{e\in E(\GD_1)} \mathcal{P}(e) \log \nu_e,
  \label{eq:def_entropy}
\end{equation}
which behaves well when the embedding degenerates, since it is insensible to gauge transformations, and thus provides a better measure of the disorder in the model. 

The sum in \eqref{eq:def_entropy} is over all edges in the fundamental domain $\GD_1$ of the Fisher graph $\GD$. But since edges of the decoration have weight $1$, only the edges coming from edges of $G$ contribute. Thus, as for $f_D$,
the variation of $s_D$ can be written as the sum of contributions $\mathsf{s}(\theta_e)$ of all rhombi in a fundamental domain $G_1$.
\begin{equation}
  \Delta s_D = s_D(\GD) - s_D(\GD^{\text{flat}}) = \sum_{e\in E(G_1)} \mathsf{s}(\theta_e) - \mathsf{s}(\theta_e^{\text{flat}}),
  \label{eq:var_entropy}
\end{equation}
where 
\begin{align*}
  \mathsf{s}&(\theta) = -\mathsf{f}(\theta) - \P(\theta)\log \nu(\theta) \\
&=-\frac{1}{2}\log\tan\frac{\theta}{2}-\frac{\pi-2\theta}{2\pi}\log\tan\theta+\frac{1}{\pi}\left(L(\theta)+L\left(\frac{\pi}{2}-\theta\right)\right)-\left(\frac{1}{2}+\frac{\pi-2\theta}{2\pi \cos\theta}\right)\log\cot\frac{\theta}{2}\\
&=\frac{\pi-2\theta}{2\pi}\left(\log\cot\theta-\frac{\log\cot\frac{\theta}{2}}{\cos\theta}\right)+\frac{1}{\pi}\left(L(\theta)+L\left(\frac{\pi}{2}-\theta\right)\right).
\end{align*}
Since $\displaystyle\lim_{\theta\rightarrow 0}\Bigl(\log\cot\theta-\frac{\log\cot\frac{\theta}{2}}{\cos\theta}\Bigr)=-\log 2$, and $L(0)=L\left(\frac{\pi}{2}\right)=0$, we deduce that the values of $\mathsf{s}(\theta^{\text{flat}})$ are given by the following limits:
\begin{equation*}
  \lim_{\theta\rightarrow 0} \mathsf{s}(\theta) =-\frac{1}{2}\log 2, \quad\text{and}\quad
  \lim_{\theta\rightarrow\frac{\pi}{2}}\mathsf{s}(\theta)=0.
\end{equation*}

Let us now evaluate $s_D(\GD^{\text{flat}})$. Since the sum of rhombus-angles around a vertex is $2\pi$, and since rhombus half-angles are equal to $0$ or $\pi/2$, there is in $G^{\text{flat}}$, around each vertex, exactly two rhombi with half-angle $\theta^{\text{flat}}$ equal to $\frac{\pi}{2}$.
Let us analyze the dimer model on $\GD_n^{\text{flat}}$: the ``long'' edges with weight equal to $\infty$ (corresponding to rhombus half-angles $0$) are present in a random dimer configuration with probability $1$. The configuration of the long edges is thus frozen. The ``short'' ones with weight equal to $1$ (corresponding to rhombus half-angles $\pi/2$) are present in a random dimer configuration with probability $\frac{1}{2}+\frac{1}{\pi}$.

As noted above, around every vertex of $G_n^{\text{flat}}$, there are exactly two ``short'' edges. This implies that ``short'' edges form a collection of $k$ disjoint cycles covering all vertices of the graph $G_n^{\text{flat}}$, for some positive integer $k$. Such a cycle cannot be trivial, because surrounding a face would require an infinite number of flat rhombi, but there is only a finite number of them in $G_n^{\text{flat}}$. Since these cycles are disjoint, their number is bounded by the number of edges crossing the ``boundary'' of $G_n^{\text{flat}}$. Therefore $k=O(n)=o(n^2)$. 

Since all ``long'' edges are taken in a random dimer configuration, this implies that, for every cycle of ``short'' ones, edges are either all present, or all absent. The logarithm of the number of configurations for these cycles grows slower than $O(n^2)$, and therefore does not  contribute to the entropy. The main contribution comes from the decorations, which have two configurations each:
\begin{equation*}
 s_D(\GD^{\text{flat}})=|V_1|\log 2.
\end{equation*}
The total number of ``short'' edges is $|V_n|$ (two halves per vertex of $G_n$), and thus the number of long ones is $|E_n|-|V_n|$. From \eqref{eq:var_entropy}, we deduce that:
\begin{multline*}
s_D=(|E_1|+|V_1|)\frac{\log 2}{2}+\\
+\sum_{e\in E(G_1)}\left[
\frac{\pi-2\theta_e}{2\pi}\left(\log\cot\theta_e-\frac{\log\cot\frac{\theta_e}{2}}{\cos\theta_e}\right)+\frac{1}{\pi}\left(L(\theta_e)+L\left(\frac{\pi}{2}-\theta_e\right)\right)\right],
\end{multline*}
and using \eqref{eq:def_entropy}, we deduce Theorem \ref{thm:free_energy}.
\end{proof}

The {\em free energy per fundamental domain} of the critical $Z$-invariant Ising model, denoted $f_I$, is defined by:
\begin{equation*}
f_I=-\lim_{n\rightarrow\infty}\frac{1}{n^2}\log Z_n^J, 
\end{equation*}
where $Z_n^J$ is the partition function of the critical $Z$-invariant Ising model on the toroidal graph $G_n$. In \cite{BaxterZ}, Baxter gives an explicit expression for the free energy of  $Z$-invariant Ising models (not only critical), by transforming the graph $G$ with star-triangle transformations to make it look like large pieces of $\ZZ^2$ glued together, and  making use of the celebrated computation of Onsager ~\cite{Onsager} on $\ZZ^2$.
As a corollary to Theorem~\ref{thm:free_energy}, we obtain an alternative proof of this formula at the critical point.

\begin{thm}[\rm\cite{BaxterZ}]
\begin{equation*}
f_I=-|V_1|\frac{\log 2}{2} -\sum_{e\in E(G_1)}\left[\frac{\theta_e}{\pi}\log\tan\theta_e+\frac{1}{\pi}\left(L(\theta_e)+L\left(\frac{\pi}{2}-\theta_e\right)\right)
\right].
\end{equation*}
\end{thm}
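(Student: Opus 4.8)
The plan is to deduce this formula from Theorem~\ref{thm:free_energy} through Fisher's correspondence, since $f_I$ and $f_D$ differ only by an explicit product over edges. First we would apply the partition function identity~\eqref{eq:Z-fisher} to the toroidal graph $G_n$; this is legitimate because Fisher's correspondence holds on any surface without boundary, in particular on the torus (the contour-to-dimer dictionary being purely local). Taking logarithms in
\begin{equation*}
Z_n^J=\Bigl(\prod_{e\in E(G_n)}\sinh J_e\Bigr)\Z_n^\nu,
\end{equation*}
dividing by $n^2$, and letting $n\to\infty$ yields $f_I=f_D-\lim_{n\to\infty}\frac{1}{n^2}\sum_{e\in E(G_n)}\log\sinh J_e$. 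Since $G$ is $\ZZ^2$-periodic, $E(G_n)$ consists of $n^2$ translated copies of $E(G_1)$ carrying the same half-angles, so that sum equals $n^2\sum_{e\in E(G_1)}\log\sinh J_e$ exactly; hence $f_I=f_D-\sum_{e\in E(G_1)}\log\sinh J_e$, with no new convergence issue arising since the partition function identity is exact at each finite level.

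Next we would express $\sinh J_e$ in terms of the rhombus half-angle $\theta_e$. At criticality the weight of an edge of $\GD$ coming from $G$ is at once $\nu(\theta_e)=\cot\tfrac{\theta_e}{2}$ and, by Fisher's correspondence, $\coth J_e$; thus $\tanh J_e=\tan\tfrac{\theta_e}{2}$ (which can also be checked straight from~\eqref{eq:critical_coupling}). Writing $\sinh^2 J_e=\tanh^2 J_e/(1-\tanh^2 J_e)$ and using $1-\tan^2\tfrac{\theta_e}{2}=\cos\theta_e/\cos^2\tfrac{\theta_e}{2}$, we get $\sinh J_e=\sin\tfrac{\theta_e}{2}/\sqrt{\cos\theta_e}$, hence $\log\sinh J_e=\log\sin\tfrac{\theta_e}{2}-\tfrac12\log\cos\theta_e$.

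It then remains to substitute the expression of Theorem~\ref{thm:free_energy} for $f_D$ into $f_I=f_D-\sum_{e\in E(G_1)}\log\sinh J_e$ and collect terms edge by edge. The Lobachevsky parts $-\tfrac1\pi\bigl(L(\theta_e)+L(\tfrac\pi2-\theta_e)\bigr)$ carry over verbatim. The two terms proportional to $\log\tan\theta_e$ --- namely $\tfrac{\pi-2\theta_e}{2\pi}\log\tan\theta_e$ coming from $f_D$ and the term $\tfrac{\theta_e}{\pi}\log\tan\theta_e$ on the target side --- recombine with total coefficient $\tfrac{\pi-2\theta_e}{2\pi}+\tfrac{\theta_e}{\pi}=\tfrac12$, and one checks, using the double-angle identity $\sin\theta_e=2\sin\tfrac{\theta_e}{2}\cos\tfrac{\theta_e}{2}$, that this leftover $\tfrac12\log\tan\theta_e$ together with $-\tfrac12\log\cot\tfrac{\theta_e}{2}$, $-\log\sin\tfrac{\theta_e}{2}$ and $\tfrac12\log\cos\theta_e$ add up to $\tfrac{\log2}{2}$ per edge. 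Summing over $E(G_1)$, this contributes $|E_1|\tfrac{\log2}{2}$, which is exactly what turns the constant $-(|E_1|+|V_1|)\tfrac{\log2}{2}$ of $f_D$ into the constant $-|V_1|\tfrac{\log2}{2}$ of the stated formula, leaving precisely the announced sum. We do not anticipate a genuine obstacle: the substantive input is Theorem~\ref{thm:free_energy}, and what is left is bookkeeping together with elementary trigonometry; the only step deserving an explicit word of care is the application of~\eqref{eq:Z-fisher} on the torus, as noted above.
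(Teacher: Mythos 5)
Your proposal is correct and follows essentially the same route as the paper: apply the Fisher identity~\eqref{eq:Z-fisher} on the toroidal exhaustion to get $f_I = f_D - \sum_{e\in E(G_1)}\log\sinh J(\theta_e)$, then substitute the expression from Theorem~\ref{thm:free_energy} and collect terms edge by edge. The only cosmetic difference is that you write $\sinh J_e=\sin\frac{\theta_e}{2}/\sqrt{\cos\theta_e}$ while the paper writes $\sinh J_e=\sqrt{\tfrac{1}{2}\tan\frac{\theta_e}{2}\tan\theta_e}$; these are equal (via $1-\cos\theta=2\sin^2\frac{\theta}{2}$), and the ensuing trigonometric bookkeeping is equivalent and lands on the same $\frac{\log 2}{2}$ per edge.
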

\begin{proof}
Using the high temperature expansion, and Fisher's correspondence, we have from Equation \eqref{eq:Z-fisher} that:
\begin{equation*}
Z_n^J=\Bigl(\prod_{e\in E(G_n)}\sinh J(\theta_e)\Bigr)\Z_n^\nu=\Bigl(\prod_{e\in E(G_1)}\sinh J(\theta_e)\Bigr)^{n^2}\Z_n^\nu. 
\end{equation*}
As a consequence, 
\begin{equation*}
 f_I=f_D-\sum_{e\in E(G_1)}\log\sinh J(\theta_e).
\end{equation*}
Moreover, 
\begin{equation*}
\sinh J(\theta)=\sinh \log\sqrt{\frac{1+\sin\theta}{\cos\theta}}=\sqrt{\frac{\tan\frac{\theta}{2}\tan\theta}{2}},
\end{equation*}
so that by Theorem \ref{thm:free_energy}:
\begin{align*}
f_I=&-(|E_1|+|V_1|)\frac{\log 2}{2}\\
&+\sum_{e\in E(G_1)}\left[\frac{\pi-2\theta_e}{2\pi}\log \tan\theta_e-\frac{1}{2}\log\cot\frac{\theta_e}{2} -\frac{1}{\pi}\left(L(\theta_e)+L\left(\frac{\pi}{2}-\theta_e\right)\right)\right]+\\
&+|E_1|\frac{\log 2}{2}-\sum_{e\in E(G_1)}\frac{1}{2}\left[\log\tan\frac{\theta_e}{2}+\log\tan\theta_e\right]\\
=&-|V_1|\frac{\log 2}{2} -\sum_{e\in E(G_1)}\left[\frac{\theta_e}{\pi}\log\tan\theta_e+\frac{1}{\pi}\left(L(\theta_e)+L\left(\frac{\pi}{2}-\theta_e\right)\right)
\right].
\end{align*}
\end{proof}

The {\em critical Laplacian matrix} on $G$ is defined in \cite{Kenyon3} by:
\begin{equation*}
\Delta_{u,v}=
\begin{cases}
\tan\theta_{uv}&\text{ if }u\sim v\\
-\sum_{u'\sim u}\tan\theta_{uu'}&\text{ if }u=v\\
0&\text{ otherwise}. 
\end{cases}
\end{equation*}
The {\em characteristic polynomial} of the critical Laplacian on $G$, denoted $P_{\Delta}(z,w)$, is defined by $P_{\Delta}(z,w)=\det \widehat{\Delta}(z,w)$, where $\widehat{\Delta}(z,w)$ is the Fourier transform of the critical Laplacian~$\Delta$. 

In a similar way, the {\em characteristic polynomial} of the critical dimer model on the Fisher graph $\GD$, denoted by $P(z,w)$ is defined by $P(z,w)=\det\widehat{K}(z,w)$, where $\widehat{K}(z,w)$ is the Fourier transform of the Kasteleyn matrix $K$ of the graph $\GD$ with critical weights.

In Theorem $8$ of \cite{isoising1}, we prove that both characteristic polynomials are equal up to a non zero multiplicative constant. Using Theorem \ref{thm:free_energy}, and \cite{Kenyon3}, we can now determine this constant explicitly.

\begin{cor}\label{cor:cst}
The characteristic polynomials of the critical dimer model on the Fisher graph $\GD$, and of the critical Laplacian on the graph $G$, are related by the following explicit multiplicative constant.
\begin{equation*}
  \forall(z,w)\in\CC^{2},\quad \quad  P(z,w)= 2^{|V_1|}\prod_{e\in E(G_1)}\Bigl(\cot^2\frac{\theta_e}{2}-1\Bigr) P_\Delta(z,w).
\end{equation*}
\end{cor}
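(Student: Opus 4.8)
The plan is to evaluate both characteristic polynomials at a single convenient point, determine the multiplicative constant there, and invoke the fact (Theorem $8$ of \cite{isoising1}) that the constant is independent of $(z,w)$. The natural candidate is a point at which both determinants have a known closed form in terms of the free energies: indeed, by definition $f_D=-\tfrac12\iint_{\TT^2}\log\det\widehat K(z,w)\,\tfrac{\ud z}{2i\pi z}\tfrac{\ud w}{2i\pi w}$, and similarly Kenyon's computation in \cite{Kenyon3} expresses $-\tfrac12\iint_{\TT^2}\log P_\Delta(z,w)\,\tfrac{\ud z}{2i\pi z}\tfrac{\ud w}{2i\pi w}$ — the logarithm of the normalized determinant of the critical Laplacian — as an explicit sum over $E(G_1)$ of Lobachevsky-type terms. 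So first I would take the logarithm of the asserted identity $P(z,w)=2^{|V_1|}\prod_{e\in E(G_1)}(\cot^2\tfrac{\theta_e}{2}-1)\,P_\Delta(z,w)$ and integrate it against $\tfrac{\ud z}{2i\pi z}\tfrac{\ud w}{2i\pi w}$ over $\TT^2$: since the constant does not depend on $(z,w)$, this is equivalent to the single scalar identity
\begin{equation*}
-2f_D = |V_1|\log 2 + \sum_{e\in E(G_1)}\log\Bigl(\cot^2\tfrac{\theta_e}{2}-1\Bigr) + \bigl(-2 f_\Delta\bigr),
\end{equation*}
where $f_\Delta:=-\tfrac12\iint_{\TT^2}\log P_\Delta\,\tfrac{\ud z}{2i\pi z}\tfrac{\ud w}{2i\pi w}$ is, up to sign and normalization, the log-normalized-determinant of the Laplacian computed in \cite{Kenyon3}.

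The second step is purely computational: substitute the formula for $f_D$ from Theorem \ref{thm:free_energy} and Kenyon's formula for $f_\Delta$, which reads $-2 f_\Delta = -|V_1|\log 2 + \sum_{e\in E(G_1)}\bigl[-\tfrac{2\theta_e}{\pi}\log\tan\theta_e - \tfrac{2}{\pi}(L(\theta_e)+L(\tfrac{\pi}{2}-\theta_e))\bigr]$ up to the precise normalization used there (I would quote the exact statement of \cite{Kenyon3} and match conventions carefully). Plugging in, the Lobachevsky terms $L(\theta_e)+L(\tfrac\pi2-\theta_e)$ should cancel identically, as should the $\tfrac{\pi-2\theta_e}{2\pi}\log\tan\theta_e$ pieces against the $\tfrac{\theta_e}{\pi}\log\tan\theta_e$ pieces, leaving only elementary logarithmic terms; one then checks that the $|V_1|\log 2$ and $|E_1|\log 2$ contributions combine with $\sum_e\bigl(-\tfrac12\log\cot\tfrac{\theta_e}{2}\bigr)$ from $f_D$ to reproduce exactly $|V_1|\log 2 + \sum_e \log(\cot^2\tfrac{\theta_e}{2}-1)$. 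Here it is worth recording the identity $\cot^2\tfrac{\theta}{2}-1 = \tfrac{\cos^2(\theta/2)-\sin^2(\theta/2)}{\sin^2(\theta/2)} = \tfrac{\cos\theta}{\sin^2(\theta/2)} = 2\cot\tfrac{\theta}{2}\cdot\tfrac{\cos\theta}{2\sin(\theta/2)\cos(\theta/2)}\cdot\ldots$, i.e. $\cot^2\tfrac\theta2 - 1 = \dfrac{\cos\theta}{\sin^2(\theta/2)} = 2\,\dfrac{\cot\theta}{\tan(\theta/2)}$, so that $\log(\cot^2\tfrac\theta2-1) = \log 2 + \log\cot\theta + \log\cot\tfrac\theta2 - \log\cot\tfrac\theta2\cdot 0$; the precise bookkeeping of these elementary terms is where the matching must be verified line by line.

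Finally I would close the argument: since by Theorem $8$ of \cite{isoising1} we already know $P(z,w) = c\, P_\Delta(z,w)$ for some nonzero constant $c$ independent of $(z,w)$, and since integrating $\log$ of this relation over $\TT^2$ determines $\log c$ — which the computation above shows equals $|V_1|\log 2 + \sum_{e\in E(G_1)}\log(\cot^2\tfrac{\theta_e}{2}-1)$ — the corollary follows for all $(z,w)\in\CC^2$ by analytic continuation (both sides being polynomials in $z,w,z^{-1},w^{-1}$ that agree up to the now-determined constant). The main obstacle I anticipate is not conceptual but bookkeeping: reconciling the sign and normalization conventions between Theorem \ref{thm:free_energy} of this paper, the definition $f_D=-\tfrac12\iint\log\det\widehat K$, and Kenyon's normalized determinant of the Laplacian in \cite{Kenyon3} — a factor of $-\tfrac12$ and an additive constant intervene, as already flagged in the discussion following Theorem \ref{thm:baxter_intro} — and then checking that the Lobachevsky and $\log\tan$ terms cancel exactly rather than up to an unnoticed constant. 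A secondary subtlety is making sure the integral $\iint_{\TT^2}\log\det\widehat K\,\tfrac{\ud z}{2i\pi z}\tfrac{\ud w}{2i\pi w}$ is genuinely well-defined (the integrand has an integrable singularity at $(z,w)=(1,1)$, which is already used in the proof of Theorem \ref{thm:free_energy}), so that taking $\log$ of the constant-multiple relation and integrating is legitimate.
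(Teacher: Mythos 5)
Your proposal follows exactly the paper's route: invoke Theorem $8$ of \cite{isoising1} for the existence of the constant $c$ with $P=cP_\Delta$, take logarithms and integrate over $\TT^2$ to express $\log c = -2f_D - \iint_{\TT^2}\log P_\Delta\,\tfrac{\ud z}{2i\pi z}\tfrac{\ud w}{2i\pi w}$, substitute Theorem \ref{thm:free_energy} and Kenyon's formula, and simplify via $2\cot\theta\cot\tfrac{\theta}{2}=\cot^2\tfrac{\theta}{2}-1$. The only imprecisions in your sketch are in the bookkeeping you yourself flag: the sign and normalization of the quoted Kenyon formula (the paper uses $\iint\log P_\Delta = 2\sum_e\bigl[\tfrac{\theta_e}{\pi}\log\tan\theta_e+\tfrac{1}{\pi}(L(\theta_e)+L(\tfrac{\pi}{2}-\theta_e))\bigr]$, with no $|V_1|\log 2$ term and the opposite signs from what you wrote), and the $\log\tan$ pieces do not cancel outright but combine as $-\tfrac{\pi-2\theta_e}{\pi}\log\tan\theta_e - \tfrac{2\theta_e}{\pi}\log\tan\theta_e = \log\cot\theta_e$; once these are corrected the computation closes exactly as in the paper.
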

\begin{proof}
Let $c\neq 0$, be the constant of proportionality given by Theorem $8$ of \cite{isoising1}:
\begin{equation*}
P(z,w)=cP_\Delta(z,w). 
\end{equation*}
Taking logarithm on both sides, and integrating with respect to $(z,w)$ over the unit torus  yields:
\begin{equation}\label{eq:const_c}
\log c=-2 f_D-\iint_{\TT^2}\log P_\Delta(z,w)\frac{\ud z}{2i\pi z} \frac{\ud w}{2i\pi w}. 
\end{equation}
Moreover, by \cite{Kenyon3} we have:
\begin{equation*}
\iint_{\TT^2}\log P_\Delta(z,w)\frac{\ud z}{2i\pi z} \frac{\ud w}{2i\pi w}=
2\sum_{e\in E(G_1)}\left[\frac{\theta_e}{\pi}\log\tan\theta_e+\frac{1}{\pi}\left(L(\theta_e)+L\left(\frac{\pi}{2}-\theta_e\right)\right)
\right].
\end{equation*}
Plugging this and Theorem \ref{thm:free_energy} into \eqref{eq:const_c}, we obtain:
\begin{align*}
\log c &=(|E_1|+|V_1|)\log 2+\sum_{e\in E(G_1)}\left(\log\cot\frac{\theta_e}{2}+\log\cot\theta_e\right)\\
&=\log\Bigl(2^{|E_1|+|V_1|}\prod_{e\in E(G_1)}\cot\frac{\theta_e}{2}\cot\theta_e\Bigr)\\
&=\log \Bigl(2^{|V_1|}\prod_{e\in E(G_1)}\Bigl(\cot^2\frac{\theta_e}{2}-1\Bigr)\Bigr).
\end{align*}
\end{proof}

\section{Proof of Theorem \ref{inverse}}\label{sec:proof}

Let us recall the setting: $G$ is an infinite isoradial graph, and $\GD$ is the corresponding Fisher graph whose edges are assigned the dimer critical weight function; $K$ denotes the infinite Kasteleyn matrix of the critical dimer model on the graph $\GD$, and defines an operator acting on functions of the vertices of $\GD$. We now prove Theorem \ref{inverse}, {\em i.e.} we show that $KK^{-1}=\Id$, where $K^{-1}$ is given by:
\begin{equation*}
\forall x,\,y\in V(\GD),\quad
K^{-1}_{x,y}=\frac{1}{4\pi^2}\oint_{\C_{x,y}}f_x(\lambda)f_y(-\lambda)\expo_{\xb,\yb}(\lambda)\log\lambda\ud\lambda+C_{x,y}.
\end{equation*}

This section is organized as follows. In Section \ref{subsec91}, we prove that the function, $f_x(\lambda)\expo_{\xb,\yb}(\lambda),$
seen as a function of $x\in V(\GD)$, is in the kernel of the Kasteleyn operator $K$. Then, in Section \ref{subsec72}, we give the general idea of the argument, inspired from \cite{Kenyon3}, used to prove $KK^{-1}=\Id$. This motivates the delicate part of the proof. Indeed, for this argument to run through, it is not enough to have the contour of integration $\C_{x,y}$ to be defined as a simple closed curve containing all poles of the integrand, and avoiding {\em the half-line $d_{x,y}$}; we need it to {\em avoid an angular sector $s_{x,y}$, which contains the half-line $d_{x,y}$ and avoids all poles of the integrand}. These angular sectors are then defined in Section \ref{subsec73}. The delicate part of the proof, which strongly relies on the definition of the angular sectors, is given in Section \ref{sec9}.

\subsection{Kernel of $K$}\label{subsec91}

In this section we prove that the function $f_x(\lambda)\expo_{\xb,\yb}(\lambda)$,
seen as a function of $x~\in~V(\GD)$, is in the kernel of the Kasteleyn operator $K$. In other words,

\begin{prop}\label{prop:kernelK}
Let $x$, $y$ be two vertices of $\GD$, and let $x_1,\,x_2,\,x_3$ be the three neighbors of $x$ in $\GD$, then for every $\lambda\in\CC$:
\begin{equation*}
\sum_{i=1}^3 K_{x,x_i} f_{x_i}(\lambda)\expo_{\xb_i,\yb}(\lambda) =0.
\end{equation*}
\end{prop}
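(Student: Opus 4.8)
The plan is to verify the local relation $\sum_{i=1}^3 K_{x,x_i} f_{x_i}(\lambda)\expo_{\xb_i,\yb}(\lambda)=0$ by a case analysis on the type of the vertex $x\in V(\GD)$, namely whether $x$ is of type `$w$', `$z$', or `$v$'. Since the discrete exponential $\expo_{\xb,\yb}(\lambda)$ factors out of the sum whenever all three neighbors $x_1,x_2,x_3$ lie in the same decoration as $x$ (so that $\xb_i=\xb$ for all $i$), the relation for vertices of type `$w$' and `$z$' — whose three neighbors are all \emph{internal} to the decoration of $\xb$ — reduces to a purely algebraic identity among the values $f_{w_k}(\lambda)$, $f_{z_k}(\lambda)$, $f_{v_k}(\lambda)$ and the Kasteleyn signs $\eps$ and the weight $\cot(\theta_k/2)$. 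For a vertex of type `$v$', say $x=v_k(\xb)$, exactly one of its neighbors — the vertex $v_\ell(\yb)$ at the other end of the edge of $G$ dual to the rhombus — lies in a \emph{different} decoration, so there $\expo_{\ybb,\yb}\neq\expo_{\xb,\yb}$ and one must use the one-rhombus step relation between $\expo_{\xb,\yb}$ and $\expo_{\ybb,\yb}$.

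First I would fix the labelling and orientations: by the choice of Kasteleyn orientation every triangle is oriented clockwise, and the angles $\alpha_{w_k},\alpha_{z_k}$ are normalized mod $4\pi$ as in \eqref{eq:angle_intra_deco}--\eqref{eq:angle_chg_deco}. For $x=w_k(\xb)$, whose neighbors are $v_k(\xb)$, $z_k(\xb)$ (both in the triangle $k$) and $z_{k+1}(\xb)$ (the linking edge $w_kz_{k+1}$), I would write out $K_{x,\cdot}$ using the signs dictated by the clockwise orientation of triangle $k$ and by \eqref{eq:angle_intra_deco} (the sign on the linking edge is exactly the one that shifts $\alpha_{z_{k+1}}$ by $0$ or $2\pi$), and then substitute the defining formulas
\[
f_{w_k}(\lambda)=\frac{e^{i\alpha_{w_k}/2}}{e^{i\alpha_{w_k}}-\lambda},\quad
f_{z_k}(\lambda)=-\frac{e^{i\alpha_{z_k}/2}}{e^{i\alpha_{z_k}}-\lambda},\quad
f_{v_k}=f_{w_k}+f_{z_k}.
\]
The identity then collapses because $f_{v_k}=f_{w_k}+f_{z_k}$ makes two of the three terms cancel the third; the only subtlety is that the edge $v_kz_k$ of the triangle has weight $1$ while the edge $v_kw_k$ also has weight $1$, and the edge $w_kz_{k+1}$ (a linking edge) has weight $1$ as well, so all weights on the internal edges are $1$ and only signs matter. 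I would treat $x=z_k(\xb)$ symmetrically, using the linking edge $w_{k-1}z_k$ on the other side. The point to check carefully here is that the $2\pi$-shift convention in \eqref{eq:angle_intra_deco} is precisely compensated by the $\pm 1$ Kasteleyn sign of the linking edge, so that the contribution $e^{i\alpha_{z_{k+1}}/2}=\pm e^{i\alpha_{w_k}/2}$ pairs correctly with $\eps_{w_k,z_{k+1}}$ — this is the one place where the nonstandard mod-$4\pi$ bookkeeping earns its keep.

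The genuinely new case, and the one I expect to be the main obstacle, is $x=v_k(\xb)$. Its three neighbors are $w_k(\xb)$, $z_k(\xb)$, and $v_\ell(\yb)$ where $\xb\yb$ is the edge of $G$ carried by the rhombus with half-angle $\theta_k(\xb)=\theta_\ell(\yb)=:\theta$. The edge $v_kw_k$ and $v_kz_k$ have weight $1$ and signs from the clockwise triangle; the edge $v_kv_\ell$ has weight $\nu(\theta)=\cot(\theta/2)$ and its sign shifts $\alpha_{w_\ell(\yb)}$ by $\pm\pi$ relative to $\alpha_{w_k(\xb)}$ via \eqref{eq:angle_chg_deco}. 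Writing out the sum, the two internal terms give $K_{v_k,w_k}f_{w_k}+K_{v_k,z_k}f_{z_k}=\pm(f_{w_k}+f_{z_k})\cdot(\pm 1)$ times $\expo_{\xb,\yb}(\lambda)$, i.e. $\pm f_{v_k}(\lambda)\expo_{\xb,\yb}(\lambda)$, while the external term is $\eps_{v_k,v_\ell}\cot(\theta/2)\,f_{v_\ell}(\lambda)\,\expo_{\ybb,\yb}(\lambda)$. Now $\ybb=\yb$ wait — more precisely $\expo$ is evaluated at the $G$-vertex underlying $v_\ell(\yb)$, call it $\ybb$, which is the neighbor of $\xb$ across the rhombus; so $\expo_{\ybb,\yb}(\lambda)=\left(\dfrac{e^{i\beta}+\lambda}{e^{i\beta}-\lambda}\right)\!\!\left(\dfrac{e^{i\gamma}+\lambda}{e^{i\gamma}-\lambda}\right)^{\!-1}\!\!\expo_{\xb,\yb}(\lambda)$ — here one must be careful about the orientation of the rhombus step, and $e^{i\beta},e^{i\gamma}$ are exactly the two rhombus unit vectors $e^{i\alpha_{w_k(\xb)}},e^{i\alpha_{z_k(\xb)}}$ (up to a global sign coming from the $\RR/4\pi$ vs. $\RR/2\pi$ discrepancy). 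After dividing through by $\expo_{\xb,\yb}(\lambda)$, what remains is a single rational identity in $\lambda$ relating $f_{v_k}(\lambda)=f_{w_k}(\lambda)+f_{z_k}(\lambda)$, $f_{v_\ell}(\lambda)=f_{w_\ell}(\lambda)+f_{z_\ell}(\lambda)$, the rhombus multiplier, and $\cot(\theta/2)$; since $\alpha_{z_\ell(\yb)},\alpha_{w_\ell(\yb)}$ differ from $\alpha_{w_k(\xb)},\alpha_{z_k(\xb)}$ by prescribed $\pm\pi$ shifts, and since $\cot(\theta/2)$ can be written as $\left|\dfrac{e^{i\alpha_{w_k}}+e^{i\alpha_{z_k}}}{e^{i\alpha_{w_k}}-e^{i\alpha_{z_k}}}\right|$ or an equivalent trigonometric expression in terms of the rhombus vectors, the identity reduces to clearing denominators and checking a polynomial identity of low degree in $\lambda$. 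The real work is the careful matching of signs: the Kasteleyn sign $\eps_{v_k,v_\ell}$, the $\pm\pi$ shift in \eqref{eq:angle_chg_deco}, and the orientation of the rhombus step in the definition of $\expo$ are all correlated (they all ultimately come from the same Kasteleyn orientation), and the claim is exactly that these three $\pm 1$'s conspire to make the rational identity hold. I would isolate this as a short lemma, verify it by direct computation after reducing to the rhombus vectors $e^{i\alpha_{w_k}},e^{i\alpha_{z_k}}$ and using $e^{i\alpha_{w_\ell(\yb)}}=-e^{i\alpha_{z_k(\xb)}}$, $e^{i\alpha_{z_\ell(\yb)}}=-e^{i\alpha_{w_k(\xb)}}$ (the $\pm\pi$ shifts), which should make the cancellation transparent.
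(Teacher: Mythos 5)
Your approach is the same as the paper's: a case analysis on whether $x$ is of type `$w$', `$z$' or `$v$', with the `$w$' and `$z$' cases collapsing via $f_{v_k}=f_{w_k}+f_{z_k}$ together with the observation that the $\pm 2\pi$ shift in \eqref{eq:angle_intra_deco} is exactly compensated by the sign $\eps_{w_k,z_{k+1}}$ (yielding $K_{w_k,z_{k+1}}f_{z_{k+1}}(\lambda)=-f_{w_k}(\lambda)$), and the `$v$' case resolved by combining the one-rhombus step rule for $\expo$ with the $\pm\pi$ shift from \eqref{eq:angle_chg_deco}. The structure and the key cancellation mechanisms you identify are precisely those the paper uses.

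Two of the details you pencil in for the `$v$' case are, however, wrong as stated and would derail the ``direct computation'' you defer to. First, the angle relation across the edge $v_k(\xb)\sim v_\l(\xb')$ is \emph{not} a swap: from \eqref{eq:angle_chg_deco} (and its consequence $\alpha_{z_\l}=\alpha_{z_k}-\eps_{v_k,v_\l}\pi$) one has
\begin{equation*}
e^{i\alpha_{w_\l(\xb')}}=-e^{i\alpha_{w_k(\xb)}},\qquad e^{i\alpha_{z_\l(\xb')}}=-e^{i\alpha_{z_k(\xb)}},
\end{equation*}
whereas you wrote $e^{i\alpha_{w_\l}}=-e^{i\alpha_{z_k}}$, $e^{i\alpha_{z_\l}}=-e^{i\alpha_{w_k}}$. (This is also what the geometry forces: $w_k$ and $w_\l$ both sit on the counterclockwise side of the rhombus as seen from their respective decorations, so they are antipodal, not transposed.) Because $f_{w}$ and $f_{z}$ carry opposite built-in signs, your swapped version happens to produce the right rational function for $f_{v_\l}$ but with an extra factor $-1$ relative to $\eps_{v_k,v_\l}$, so the would-be cancellation fails by a sign; this is exactly the kind of three-way sign conspiracy you correctly flag as the crux, and it only closes with the non-swapped relation. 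Second, the one-rhombus factor relating the exponentials should be
\begin{equation*}
\expo_{\xb',\yb}(\lambda)=\expo_{\xb',\xb}(\lambda)\,\expo_{\xb,\yb}(\lambda),\qquad
\expo_{\xb',\xb}(\lambda)=\frac{e^{i\alpha_{w_k}}+\lambda}{e^{i\alpha_{w_k}}-\lambda}\cdot\frac{e^{i\alpha_{z_k}}+\lambda}{e^{i\alpha_{z_k}}-\lambda},
\end{equation*}
i.e.\ both rhombus multipliers appear with the same exponent (since $\xb'-\xb=e^{i\alpha_{w_k}}+e^{i\alpha_{z_k}}$ is a sum of two forward steps), not one factor times the inverse of the other as you wrote. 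With these two corrections your outline reduces, exactly as in the paper, to the single identity $K_{v_k,v_\l}\expo_{\xb',\xb}(\lambda)f_{v_\l}(\lambda)=f_{w_k}(\lambda)-f_{z_k}(\lambda)$, after which $f_{z_k}-f_{w_k}+(f_{w_k}-f_{z_k})=0$ finishes the `$v$' case.
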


\begin{proof}
There are three cases to consider, depending on whether the vertex $x$ is of type `$w$', `$z$' or `$v$'.

\underline{\em If $x=w_k(\xb)$ for some $k\in\{1,\dots,d(\xb)\}$}. Then
the three neighbors of $x$ are $x_1=z_k(\xb)$, $x_2=z_{k+1}(\xb)$ and
$x_3=v_k(\xb)$. Since $x,x_1,x_2,x_3$ all belong to the same decoration, we omit the
argument $\xb$. By our
choice of Kasteleyn orientation, we have:
$$
K_{w_k,z_k}=-1,\quad  K_{w_k,z_{k+1}}=\eps_{w_k,z_{k+1}},\quad K_{w_k,v_k}=1.
$$
By definition of the angles in $\RR/4\pi\ZZ$ associated to vertices of $\GD$, see
\eqref{eq:angle_intra_deco}, we have:
$$
\alpha_{z_{k+1}}=\left\{
\begin{array}{ll}
\alpha_{w_k} &\mbox{ if }\eps_{w_k,z_{k+1}}=1\\
\alpha_{w_k}+2\pi &\mbox{ if }\eps_{w_k,z_{k+1}}=-1.\\
\end{array}\right.
$$
Using the definition of the function $f$, we deduce that:
\begin{equation}\label{eq:kw01}
K_{w_k,z_{k+1}}f_{z_{k+1}}(\lambda) = -f_{w_k}(\lambda).
\end{equation}
As a consequence,
\begin{align}\label{eq:Kw}
\sum_{i=1}^3 K_{x,x_i} f_{x_i}(\lambda)\expo_{\xb_i,\yb}&(\lambda)=\nonumber\\
&= \left[K_{w_k,z_k}f_{z_k}(\lambda)+K_{w_k,z_{k+1}}f_{z_{k+1}}(\lambda)+K_{w_k,v_k}f_{v_k}(\lambda)\right]\expo_{\xb,\yb}(\lambda)\nonumber\\
&=\left[ -f_{z_k}(\lambda) -f_{w_k}(\lambda) + f_{v_k}(\lambda)\right]\expo_{\xb,\yb}(\lambda).
\end{align}
Since by definition we have $f_{v_k}(\lambda)=f_{w_k}(\lambda)+f_{z_k}(\lambda)$, we deduce that \eqref{eq:Kw} is equal to~$0$.

\underline{\em If $x=z_k(\xb)$ for some $k\in\{1,\dots,d(\xb)\}$}. Then
the three neighbors of $x$ are $x_1=w_{k-1}(\xb)$, $x_2=w_k(\xb)$ and
$x_3=v_k(\xb)$. By our
choice of Kasteleyn orientation, we have:
$$
K_{z_k,w_{k-1}}=\eps_{z_k,w_{k-1}},\quad  K_{z_k,w_k}=1,\quad K_{z_k,v_k}=-1.
$$
Similarly to the case where $x=w_k$, we have:
\begin{equation*}
K_{z_k,w_{k-1}}f_{w_{k-1}}(\lambda) =f_{z_k}(\lambda).
\end{equation*}
As a consequence,
\begin{align}\label{eq:Kz}
\sum_{i=1}^3 K_{x,x_i} f_{x_i}(\lambda)\expo_{\xb_i,\yb}&(\lambda)=\nonumber\\
&= \left[K_{z_k,w_{k-1}}f_{w_{k-1}}(\lambda)+K_{z_k,w_k}f_{w_k}(\lambda)+K_{z_k,v_k}f_{v_k}(\lambda)\right]\expo_{\xb,\yb}(\lambda)\nonumber\\
&=\left[ f_{z_k}(\lambda) +f_{w_k}(\lambda) - f_{v_k}(\lambda)\right]\expo_{\xb,\yb}(\lambda).
\end{align}
Again, we deduce that \eqref{eq:Kz} is equal to $0$.

\underline{\em If $x=v_k(\xb)$ for some $k\in\{1,\dots,d(\xb)\}$}. Then the three
neighbors of $x$ are $x_1=z_k(\xb)$, $x_2=w_k(\xb)$ and $x_3=v_\l(\xb')$,
where $\l$ and $\xb'$ are such that $v_k(\xb)\sim v_\l(\xb')$. Although all vertices do not
belong to the same decoration, we omit the arguments $\xb$ and $\xb'$, knowing
that the index $k$ (resp. $\l$) refers to the vertex $\xb$ (resp. $\xb'$). By our choice of Kasteleyn orientation, we have:
\begin{equation*}
K_{v_k,z_k}=1,\quad K_{v_k,w_k}=-1,\quad 
K_{v_k,v_\l}= \eps_{v_k,v_\l}\cot\left(\frac{\scriptstyle \alpha_{w_k}-\alpha_{z_k}}{\scriptstyle4}\right).
\end{equation*}

Using the fact that
$\expo_{\xb',\yb}(\lambda)=\expo_{\xb',\xb}(\lambda)\expo_{\xb,\yb}(\lambda)$,
we deduce:
\begin{align*}
\sum_{i=1}^3 K_{x,x_i} f_{x_i}(\lambda)&\expo_{\xb_i,\yb}(\lambda)=\nonumber\\
&=\left[K_{v_k,z_k}f_{z_k}(\lambda)+K_{v_k,w_k}f_{w_k}(\lambda)
+K_{v_k,v_\l}\expo_{\xb',\xb}(\lambda)f_{v_\l}(\lambda)\right]\expo_{\xb,\yb}(\lambda)\nonumber\\
&=\left[
f_{z_k}(\lambda)-f_{w_k}(\lambda)+
\eps_{v_k,v_\l}\cot\left(\frac{\scriptstyle
    \alpha_{w_k}-\alpha_{z_k}}{\scriptstyle4}\right)\expo_{\xb',\xb}(\lambda)f_{v_\l}(\lambda)
\right]\expo_{\xb,\yb}(\lambda).
\end{align*}

Moreover, by definition of the angles in $\RR/4\pi\ZZ$
associated to vertices of $\GD$, see \eqref{eq:angle_chg_deco}, we have:
\begin{equation*}
\alpha_{w_\l}=\alpha_{w_k}-\eps_{v_k,v_\l}\pi,\quad \alpha_{z_\l}=\alpha_{z_k}-\eps_{v_k,v_\l}\pi,
\end{equation*}
and using the definition of the function $f$, we deduce:
\begin{equation*}
f_{v_\l}(\lambda)=\left(\frac{e^{i\frac{\alpha_{w_\l}}{2}}}
  {e^{i\alpha_{w_\l}}-\lambda}-\frac{e^{i\frac{\alpha_{z_\l}}{2}}}
  {e^{i\alpha_{z_\l}}-\lambda}\right)
=i\eps_{v_k,v_\l}
\left(\frac{e^{i\frac{\alpha_{w_k}}{2}}} {e^{i\alpha_{w_k}}+\lambda}-\frac{e^{i\frac{\alpha_{z_k}}{2}}} {e^{i\alpha_{z_k}}+\lambda}\right),
\end{equation*}
so that:
\begin{align}\label{eq:Kv0}
K_{v_k,v_\l}\expo_{\xb',\xb}&(\lambda)f_{v_\l}(\lambda)=\nonumber\\
&=i\cot\left(\frac{\scriptstyle\alpha_{w_k}-\alpha_{z_k}}{\scriptstyle 4}\right)\frac{\bigl(e^{i\alpha_{w_k}}+\lambda\bigr)\bigl(e^{i\alpha_{z_k}}+\lambda\bigr)}{\bigl(e^{i\alpha_{w_k}}-\lambda\bigr)\bigl(e^{i\alpha_{z_k}}-\lambda\bigr)}\left(\frac{e^{i\frac{\alpha_{w_k}}{2}}} {e^{i\alpha_{w_k}}+\lambda}-\frac{e^{i\frac{\alpha_{z_k}}{2}}} {e^{i\alpha_{z_k}}+\lambda}\right)\nonumber\\
&=f_{w_k}(\lambda)-f_{z_k}(\lambda).
\end{align}
As a consequence,
\begin{equation}\label{eq:Kv}
\sum_{i=1}^3 K_{x,x_i} f_{x_i}(\lambda)\expo_{\xb_i,\yb}(\lambda)
=\left[
f_{z_k}(\lambda)-f_{w_k}(\lambda)+f_{w_k}(\lambda)-f_{z_k}(\lambda)\right]\expo_{\xb,\yb}(\lambda)=0.
\end{equation}
\end{proof}

\subsection{General idea of the argument}\label{subsec72}

The general argument used to prove Theorem \ref{inverse}, {\em i.e.} $KK^{-1}=\Id$, is inspired from \cite{Kenyon3}, where Kenyon computes a local explicit expression for the inverse of the Kasteleyn matrix of the critical dimer model on a bipartite, isoradial graph. It cannot be applied as such to our case of the critical dimer model on the Fisher graph $\GD$ of $G$, which is {\em not} isoradial, but it is nevertheless useful to sketch the main ideas.

Let $x,y$ be two vertices of $\GD$, and let us assume that the contour of integration $\C_{x,y}$ of the integral term of $K^{-1}_{x,y}$, is defined to be a simple closed curve oriented counterclockwise, containing all poles of the integrand and avoiding an angular sector $s_{x,y}$, which contains the half-line $d_{x,y}$, see Figure \ref{arg} below.

\begin{figure}[ht]
\centering
\includegraphics[width=4.5cm]{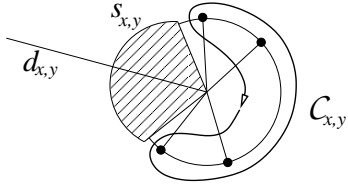}
\caption{\label{arg}
Definition of the contour of integration $\C_{x,y}$ of the integral term of $K^{-1}_{x,y}$. The poles of the integrand are the thick points.}
\end{figure}

The argument runs as follows.
Let $x_1,x_2,x_3$ be the three neighbors of $x$ in
$\GD$. When $x\neq y$, the goal is to show that the intersection of the three sectors
$\bigcap_{i=1}^3 s_{x_i,y}$ is non empty. Then, the three contours
$\C_{x_i,y}$ can be continuously deformed to a common contour $\C$ without
meeting any pole, and
\begin{multline}\label{eq:general1}
\sum_{i=1}^3  K_{x,x_i}\oint_{\C_{x_i, y}}f_{x_i}(\lambda)f_y(-\lambda)\expo_{\xb_i,\yb}(\lambda)\log\lambda\frac{\ud \lambda}{(2\pi)^2}= \\
=\oint_\C \left(\sum_{i=1}^3 K_{x,x_i}f_{x_i}(\lambda)\expo_{\xb_i,\yb}(\lambda)\right)f_y(-\lambda) \log\lambda\frac{\ud \lambda}{(2\pi)^2} =0,
\end{multline}
since by Proposition~\ref{prop:kernelK}, the sum in brackets is zero. Note that in \cite{Kenyon3}, Kenyon has a result similar to Proposition \ref{prop:kernelK}, giving functions that are in the kernel of the Kasteleyn matrix of the dimer model on a bipartite, isoradial graph.

When $x=y$, the goal is to show that the intersection of the three sectors is empty, and explicitly compute:
 \begin{equation}\label{eq:general2}
\sum_{i=1}^3  K_{x,x_i}\oint_{\C_{x_i, x}}f_{x_i}(\lambda)f_x(-\lambda)\expo_{\xb_i,\xb}(\lambda)\log\lambda\frac{\ud \lambda}{(2\pi)^2}=1.
\end{equation}
In our case things turn out to be more complicated, since we cannot define angular sectors $s_{x,y}$ so that \eqref{eq:general1} and \eqref{eq:general2} hold as such. We define them in such a way that when $x$ and $y$ do not belong to the same triangle of a decoration $\xb$, the intersection of the three sectors $\bigcap_{i=1}^3 s_{x_i,y}$ is non empty, and \eqref{eq:general1} holds. Then, in order for $(KK^{-1})_{x,y}$ to be equal to $\delta_{x,y}$ in all other cases, we need to have an additional constant $C_{x,y}$ in the expression of the inverse $K^{-1}_{x,y}$. Let us mention that angular sectors are not constructed explicitly in \cite{Kenyon3}, since geometric considerations suffice. The construction of Case $1$ of Section \ref{subsubsec823} could also be used in that setting.

\subsection{Definition of the angular sectors $s_{x,y}$}\label{subsec73}

Let $x,y$ be two vertices of $\GD$. In this section, we construct the angular sector $s_{x,y}$ containing the half-line $d_{x,y}$ and avoiding all poles of the integrand of $K^{-1}_{x,y}$. In order to do this, we first recall some facts about
isoradial graphs, then we encode the poles of the integrand of $K^{-1}_{x,y}$ in an edge-path $\gamma_{x,y}$ of the diamond graph $\GR$. Using this path $\gamma_{x,y}$, we define the angular sector $s_{x,y}$.

\subsubsection{Train-tracks and minimal paths}\label{subsubsec821}

Let $G$ be an infinite isoradial graph, and $G^{\diamond}$ be the associated
diamond graph. Let us first recall the following definition of \cite{KeSchlenk}. A {\em train-track} of $G^{\diamond}$ is a path of
edge-adjacent rhombi of $G^{\diamond}$, which does not turn:
on entering a face, it exits along the opposite edge. As a consequence, each
rhombus in a train-track has an edge parallel to a fixed unit vector, called
the {\em direction} of the train-track. We assume that each
train-track is extended as far as possible in both direction, so that it is a
bi-infinite path. By \cite{KeSchlenk}, we know that a train-track can not intersect
itself, and that two distinct train-tracks cross each other at most once.

Let $\xb,\yb$ be two vertices of $G$, we now define the notion of {\em minimal
  path} of $G^{\diamond}$ from $\xb$ to $\yb$. We say that a train-track of $G^{\diamond}$ {\em
  separates} $\xb$ from $\yb$, if when deleting it, $\xb$ and $\yb$ are in two
distinct connected components. Now, observe that each edge of $G^{\diamond}$
belongs to a unique train-track whose direction is given by that edge, it is called
the {\em train-track associated to the edge}. An edge-path $\gamma$ of
$G^{\diamond}$, from $\xb$ to $\yb$, is called {\em minimal}, if all the
train-tracks associated to edges
of $\gamma$ separate $\xb$ from $\yb$, and all those train-tracks are distinct. In
general there is not uniqueness of the minimal path, but all minimal paths
consist of the same steps, taken in a different order.

\subsubsection{Encoding the poles of the integrand of $K^{-1}_{x,y}$}\label{subsubsec822}

Let $x,y$ be two vertices of $\GD$, and let $\xb,\yb$ be the corresponding vertices of $G$. In this section, we define an edge-path $\gamma_{x,y}$ of the diamond graph $\GR$ encoding the poles of the integrand $f_x(\lambda)f_y(-\lambda)\expo_{\xb,\yb}(\lambda)\log\lambda$ of $K^{-1}_{x,y}$.

By definition of the exponential function and of a minimal path, the poles of
$\expo_{\xb,\yb}(\lambda)$ are encoded in the steps of a minimal path of $\GR$, oriented from
$\yb$ to $\xb$. It would be natural to see the poles of the integrand of $K^{-1}_{x,y}$,
as representing the steps of a path of $\GR$ passing through $\yb$ and $\xb$,
obtained by adding the steps corresponding to the poles of $f_x(\lambda)$ and
$f_y(-\lambda)$. However the situation is a bit trickier due to possible
cancellations of these poles with factors
appearing in the numerator of $\expo_{\xb,\yb}(\lambda)$. This is taken into account in the following way.

By definition, the function $f_x(\lambda)$ has either $1$ or $2$ poles. Let us suppose it has $2$, denoted by $e^{i\alpha_1}$, $e^{i\alpha_2}$ (the case where it has $1$ is similar and easier to handle), denote by $T_x^1$, $T_x^2$ the two associated train-tracks, and let
$T_x=\{T_x^1,T_x^2\}$. Similarly, let us suppose that $f_y(-\lambda)$ has $2$ poles $-e^{i\alpha_1'}$, $-e^{i\alpha_2'}$, defining two train-tracks $T_y^1$, $T_y^2$, and let $T_y=\{T_y^1,T_y^2\}$.

Let $\gamma_{x,y}$ be a minimal path from $\yb$ to $\xb$. If $T_x^1$ separates $\yb$ from $\xb$, then the pole $e^{i\alpha_1}$ is cancelled by the exponential, and we leave
$\gamma_{x,y}$ unchanged. If not, this pole remains, and we extend $\gamma_{x,y}$
by adding the step $e^{i\alpha_1}$ so that the initial vertex of $e^{i\alpha_1}$ is incident to $\xb$, let $\hat{\xb}$ be the ending vertex of $e^{i\alpha_1}$. This yields a new path of $\GR$, also denoted $\gamma_{x,y}$, from $\yb$ to $\hat{\xb}$, passing through $\yb$ and $\xb$. We then do the same procedure for the pole $e^{i\alpha_2}$.

When dealing with a pole of $f_y(-\lambda)$, one needs to be careful since, even when the corresponding train-track separates $\yb$ from $\xb$, the exponential function might not cancel the pole if it has already canceled the same pole of $f_x(\lambda)$. This happens when $T_x$ and $T_y$ have a common train-track. The procedure to extend $\gamma_{x,y}$ runs as follows. If $T_y^1$ separates $\yb$ from $\xb$, and $T_y^1\cap T_x=\emptyset$, then the pole $-e^{i\alpha_1'}$ is canceled by the exponential function, and we leave $\gamma_{x,y}$ unchanged. If not this pole remains, and we extend $\gamma_{x,y}$ by adding the step $-e^{i\alpha_1'}$ so that the ending vertex of $-e^{i\alpha_1'}$ is $\yb$, let $\hat{\yb}$ be the initial vertex of $-e^{i\alpha_1'}$. The same procedure is done for $-e^{i\alpha_2'}$, yielding a new path $\gamma_{x,y}$ of $\GR$, from $\hat{\yb}$ to $\hat{\xb}$, passing through $\yb$ and $\xb$, and encoding the poles of the integrand of $K^{-1}_{x,y}$.

\subsubsection{Obtaining an angular sector $s_{x,y}$ from the path $\gamma_{x,y}$}\label{subsubsec823}

Let $x,y$ be two vertices of $\GD$. In this section, we construct an angular sector $s_{x,y}$,
containing the half-line $d_{x,y}$, and avoiding all poles of the integrand of $K^{-1}_{x,y}$, from the path $\gamma_{x,y}$ encoding these poles. The main step consists in constructing a convex polygon $P_{x,y}$ from $\gamma_{x,y}$. The angular sector $s_{x,y}$ is then naturally defined from $P_{x,y}$.

In order to construct the convex polygon, let us recall the following definition of \cite{Mercat1} and lemma of \cite{Bea1}. A finite, simply connected sub-graph of $\GR$ is called {\em train-track-convex}, if every train-track of $\GR$ which intersects this sub-graph crosses its boundary twice exactly.
\begin{lem}[\cite{Bea1}]\label{lem:bea}
Any finite, simply connected, train-track convex sub-graph $Q$ of $\GR$ can be completed by a finite number of rhombi in order to become a convex polygon $P$ whose opposite sides are parallel.
\end{lem}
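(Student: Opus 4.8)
The plan is to complete $Q$ to a \emph{zonogon} determined by the directions of the train-tracks meeting $Q$, by repeatedly gluing a single rhombus onto a reflex corner of the boundary and checking that the procedure terminates.

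First I would record the combinatorics of $\partial Q$. Let $T_1,\dots,T_m$ be the train-tracks of $\GR$ that intersect $Q$, with unit directions $\tilde u_1,\dots,\tilde u_m$ (not necessarily distinct, since parallel train-tracks may occur), each $\tilde u_k$ chosen with $\arg\tilde u_k\in[0,\pi)$. By train-track convexity each $T_k$ crosses $\partial Q$ in exactly two edges, both parallel to $\tilde u_k$; reading $\partial Q$ counterclockwise and using that $T_k\cap Q$ is a band joining these two edges, one sees they are traversed in opposite directions. Hence the counterclockwise edge-word of $\partial Q$ contains each of $\tilde u_1,-\tilde u_1,\dots,\tilde u_m,-\tilde u_m$ exactly once, and in particular the area of \emph{any} simple polygon with this edge-multiset is bounded above by a constant $A_{\max}=A_{\max}(\tilde u_1,\dots,\tilde u_m)$. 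Let $Z:=\sum_{k=1}^m[0,\tilde u_k]$ be the associated zonogon: a convex polygon with opposite sides parallel (a side being longer than $1$ exactly where several $T_k$ are parallel), whose counterclockwise edge-word, after merging parallel consecutive edges, is the sorted list of the $\tilde u_k$ followed by the sorted list of the $-\tilde u_k$. Since a convex polygon is determined up to translation by its counterclockwise edge sequence, and for a convex polygon that sequence must be sorted by argument, any simple polygon with the above edge-multiset that happens to be convex is a translate of $Z$.

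Now the inductive step. If $Q$ is not a translate of $Z$, then $\partial Q$ is not convex, so it has a vertex $v$ with interior angle $\alpha_v>\pi$; let $\vec e_1$ be the incoming and $\vec e_2$ the outgoing edge-vector at $v$, so $\vec e_1,\vec e_2$ are not parallel and each is $\pm\tilde u_k$ for some $k$. Glue onto $\partial Q$ at $v$, on the exterior side, the rhombus $R_v$ with vertices $v,\ v-\vec e_1,\ v+\vec e_2,\ v-\vec e_1+\vec e_2$: because $\alpha_v>\pi$, $R_v$ lies in the complementary wedge of angle $2\pi-\alpha_v<\pi$ at $v$ and meets $\partial Q$ exactly along the two boundary edges incident to $v$. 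The new region $Q':=Q\cup R_v$ is then tiled by rhombi whose edge-directions are among the $\tilde u_k$; its boundary still uses each of $\pm\tilde u_1,\dots,\pm\tilde u_m$ exactly once (a $\vec e_1$- and a $\vec e_2$-edge having been replaced by parallel ones) and the two affected train-tracks still cross it twice, so $Q'$ is again train-track convex; and $\mathrm{area}(Q')-\mathrm{area}(Q)$ equals the area of $R_v$, hence is at least $\sin\varepsilon$ since all rhombus angles lie in $[\varepsilon,\pi-\varepsilon]$. Iterating, the area stays $\le A_{\max}$ while increasing by $\ge\sin\varepsilon$ at each step, so after finitely many rhombi we reach a train-track convex region with no reflex vertex, which by the previous paragraph is a translate of $Z$. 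Since we only ever added rhombi, the rhombus tiling of $Q$ has been extended to a rhombus tiling of the convex polygon $P:=Z$, whose opposite sides are parallel.

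The point that genuinely requires care — and that I expect to be the main obstacle — is verifying at each step that $R_v$ does not overlap the rest of the current region, so that $Q'$ stays a simple, simply connected polygon: although $R_v$ is attached inside the exterior wedge at $v$, a far-away portion of the boundary could a priori intrude into it. This is exactly where train-track convexity is used: an overlap would force one of the two train-tracks passing through $R_v$ (those of the directions of $\vec e_1$ and $\vec e_2$) either to intersect itself or to cross $\partial Q$ more than twice, both impossible by $\cite{KeSchlenk}$ and the defining property of train-track convexity. Making this precise — using that the train-tracks meeting $Q$ form a planar, non-self-crossing family — is the technical heart of the argument; everything else is bookkeeping.
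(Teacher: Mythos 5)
Your approach matches the one sketched in the paper (the paper only cites the proof from \cite{Bea1} and summarizes it as ``add a rhombus along the boundary each time the polygon is not convex, show this is always possible and terminates''): glue a rhombus at a reflex vertex, keep the boundary edge-multiset and the twice-crossing property as invariants, and argue that the only fixed point of the process is a translate of the zonogon $Z$. You also correctly single out the genuinely delicate point, namely that $R_v$ cannot overlap the rest of the region, and correctly identify train-track convexity as the tool that rules this out.

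One step of your termination argument is not justified as written. You assert that $\operatorname{area}(R_v)\ge\sin\varepsilon$ ``since all rhombus angles lie in $[\varepsilon,\pi-\varepsilon]$'', but that hypothesis is an assumption on the faces of $\GR$, and the rhombi you glue are in general \emph{not} faces of $\GR$ (the exterior wedge at a reflex vertex may contain several faces of $\GR$, in which case $R_v$ is a new rhombus whose angle $2\pi-\alpha_v$ need not lie in $[\varepsilon,\pi-\varepsilon]$; after a few steps $\alpha_v$ is a sum of angles including those of previously added rhombi, so there is no a priori control). To make the bound work you would have to argue that the two edge-directions at a reflex vertex always come from two train-tracks of $\GR$ that actually cross, so that their angle is a genuine rhombus angle of $\GR$ -- and that this persists after adding rhombi not in $\GR$. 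This is not obvious. A cleaner and standard route to termination is combinatorial rather than metric: the final zonogon $Z=\sum_k[0,\tilde u_k]$ admits rhombus tilings with a \emph{fixed} number of tiles (one per unordered pair of non-parallel generators), every intermediate region is tiled by rhombi whose directions lie among the $\tilde u_k$, and each step strictly increases the tile count; so the process stops after at most $\binom{m}{2}$ additions. Replacing your area estimate by this count closes the gap, and the rest of your argument -- the edge-word bookkeeping, the preservation of train-track convexity, and the identification of the terminal convex region with a translate of $Z$ -- is sound.
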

The construction of the convex polygon $P$ from $Q$ is an explicit algorithm which consists in adding a rhombus along the boundary each time the polygon is not convex, see Figures \ref{fig:insertion_algo} and \ref{fig:insertion_algo_1} for examples. The proof consists in showing that it is possible to add a rhombus at every step, and that the algorithm ends in a finite number of steps. As a consequence of the construction, we have the following facts:
\begin{enumerate}
\item The train-tracks of $\GR$ associated to boundary edges of $Q$, and those of
  $P$ associated to boundary edges of $Q$ have the same parallel direction.
\item Each train-track corresponding to an edge of $P$ crosses the boundary of
  $P$ twice exactly.
\item The boundary of the convex polygon $P$ is independent of the order in which the rhombi are added.
\end{enumerate}
We now construct a simply-connected, train-track convex subgraph $Q_{x,y}$ from the path $\gamma_{x,y}$, encoding the poles of the integrand of $K^{-1}_{x,y}$. Applying Lemma \ref{lem:bea} yields a convex polygon $P_{x,y}$, from which we define the angular sector $s_{x,y}$. 

Recall that $T_x$ (resp. $T_y$) consists of the one/two train-track(s) associated to poles of $f_x(\lambda)$ (resp. $f_y(-\lambda)$). There are three cases to consider, depending on the intersection properties of $T_x$ and $T_y$. 

\underline{Case $1$: $T_x$ and $T_y$ have $0$ train-track in common}.
Then, by construction, the path $\gamma_{x,y}$ of Section \ref{subsubsec822} is a minimal path of $\GR$ from $\hat{\yb}$ to $\hat{\xb}$, passing through $\yb$ and $\xb$. As such, it is a simply-connected, train-track convex sub-graph of $\GR$. Indeed, $\gamma_{x,y}$ is a subgraph of $\GR$, whose interior is
the empty set, and whose boundary is $\gamma_{x,y}$ itself, consisting of
``doubled'' edges enclosing the empty interior. Since $\gamma_{x,y}$ is minimal,
all train-tracks associated to edges of $\gamma_{x,y}$ are
distinct, so that they all cross the path $\gamma_{x,y}$ once, i.e. the boundary of $\gamma_{x,y}$
twice. We define $Q_{x,y}$ to be $\gamma_{x,y}$, and apply Lemma \ref{lem:bea} in order to obtain a convex polygon $P_{x,y}$, see Figure \ref{fig:insertion_algo} (left), the numbers indicate the order in which the rhombi are added.

\begin{figure}[ht]
\centering
\includegraphics[width=10.5cm]{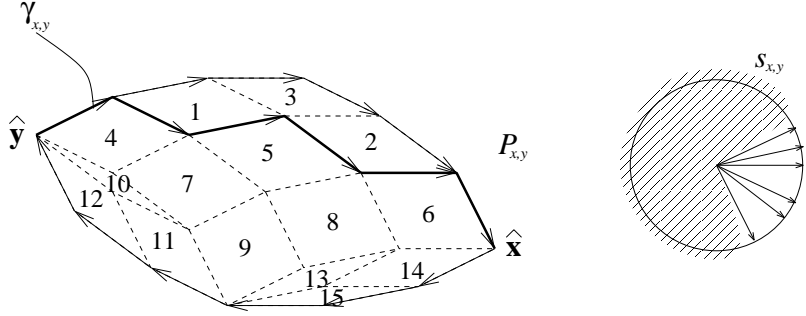}
\caption{\label{fig:insertion_algo}
Left: construction of the convex polygon $P_{x,y}$ from a minimal path
$\gamma_{x,y}$. Right: corresponding angular sector $s_{x,y}$.}
\end{figure}

Using $1$, $2$, $3$, and the fact that $\gamma_{x,y}$ is minimal, we  deduce that
the vertices $\hat{\yb}$ and $\hat{\xb}$ are on the boundary of
$P_{x,y}$, and that the boundary of $P_{x,y}$ is independent of the choice minimal path of $\GR$ from $\hat{\yb}$ to $\hat{\xb}$. Moreover, we deduce that each edge of the boundary of $P_{x,y}$ from $\hat{\yb}$ clockwise to $\hat{\xb}$ is parallel to a unique edge of $\gamma_{x,y}$. The same holds for
the boundary of $P_{x,y}$ from $\hat{\xb}$ clockwise to $\hat{\yb}$.

Let $e_1,\cdots,e_n$ be the edges on the boundary of $P_{x,y}$ from $\hat{\yb}$ to
$\hat{\xb}$, oriented clockwise. Then these edges encode the poles of the integrand 
$f_x(\lambda) f_y(-\lambda)\expo_{\xb,\yb}(\lambda)\log\lambda$ of $K^{-1}_{x,y}$. Since $P_{x,y}$ is convex, they are all included in an angular sector of size smaller than $\pi$. We define $s_{x,y}$ to be an angular sector strictly included in the complement of this sector, see Figure \ref{fig:insertion_algo} (right) where the sector $s_{x,y}$ is given in dashed lines.

\underline{Case $2$: $T_x$ and $T_y$ have $1$ train-track $T$ in common}. We specify the construction of the path $\gamma_{x,y}$ as follows. Let us first consider the poles of $f_x(\lambda)$ and $f_y(-\lambda)$  which are not in the common train-track $T$ (if any), then there is a minimal path from a vertex $\yb_0$ to a vertex $\xb_0$, passing through $\yb$ and $\xb$, encoding these poles and those of the exponential function. We then consider two cases: the common train-track $T$ may or may not separate $\yb$ from $\xb$.

Suppose that the common train-track $T$ does not separate $\yb$ from $\xb$, see also Figure \ref{fig:sametraintrack}. Then the vertices $\yb_0$, $\yb$, $\xb_0$, $\xb$ are on the same side of $T$, and the minimal path from $\yb_0$ to $\xb_0$ follows the boundary of the train-track $T$. Moreover the pole of $f_x(\lambda)$ corresponding to $T$ is
$e^{i\alpha}$, that of $f_y(-\lambda)$ is the opposite $-e^{i\alpha}$, and the exponential function cancels none of these poles. As a consequence the path $\gamma_{x,y}$ is
obtained by adding the vector $-e^{i\alpha}$ at the beginning of the path, and the vector $e^{i\alpha}$ at the end, see Figure \ref{fig:sametraintrack} (thick lines).

\begin{figure}[ht]
\centering
\includegraphics[width=8.5cm]{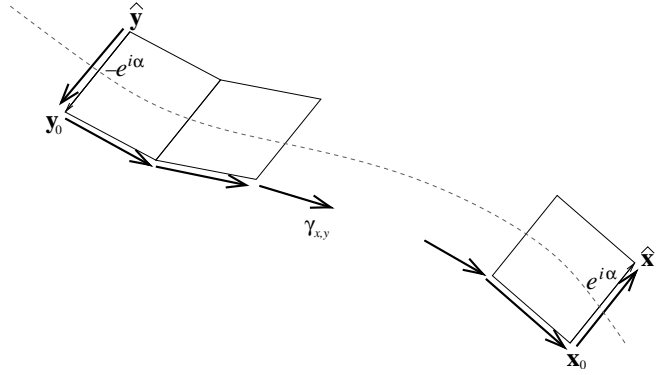}
\caption{\label{fig:sametraintrack}
The common train-track $T$ does not separate $\yb$
from $\xb$. The path $\gamma_{x,y}$ is given in thick line. }
\end{figure}

Suppose that the common train-track $T$ separates $\yb$ from $\xb$, see also Figure \ref{fig:sametraintrack1}. Then the vertices $\yb_0$, $\yb$ are on one side of the train-track $T$, and the vertices $\xb_0$, $\xb$ are on the other. The minimal path from $\yb_0$ to $\xb_0$ follows the boundary of the train-track $T$ for a number of steps (which can be $0$), then crosses it, and follows the boundary on the other side until $\xb_0$. Moreover, $f_x(\lambda)$ and $f_y(-\lambda)$ have the same pole $e^{i\alpha}$ corresponding to $T$. The exponential function cancels one of the two poles, say the one of $f_x(\lambda)$, so that the pole of $f_y(-\lambda)$ remains. As a consequence, the path $\gamma_{x,y}$ is obtained by adding the pole $e^{i\alpha}$ at the beginning of the minimal path from $\yb_0$ to $\xb_0$, and leaving $\xb_0$ unchanged. The path $\gamma_{x,y}$ thus crosses the train-track $T$ from $\hat{\yb}$ to $\yb_0$, follows the boundary for a number of steps, then crosses the train-track $T$ again, and follows the boundary until $\hat{\xb}=\xb_0$. Just by changing the order of the steps, we can choose $\gamma_{x,y}$ as in Figure \ref{fig:sametraintrack1} below (thick lines).

\begin{figure}[ht]
\centering
\includegraphics[width=8cm]{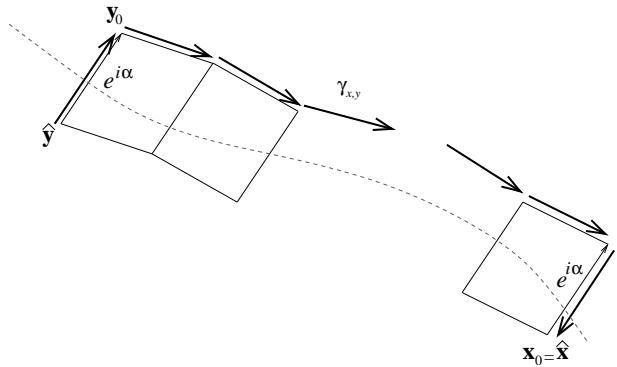}
\caption{\label{fig:sametraintrack1}
The common train-track $T$ separates $\yb$ from $\xb$. The path $\gamma_{x,y}$ is given in thick line.}
\end{figure}

Note that in both cases, the path $\gamma_{x,y}$ starts with a parallel direction of
the common train-track $T$ from $\hat{\yb}$ to $\yb_0$, then follows its boundary, and ends with the opposite parallel direction until $\hat{\xb}$. Define $Q_{x,y}$ to be the subgraph of $\GR$ made of the rhombi of the train-track $T$ that are bounded by $\gamma_{x,y}$. Then, $Q_{x,y}$ is train-track convex and we can apply Lemma \ref{lem:bea} in order to obtain a convex polygon $P_{x,y}$, see Figure \ref{fig:insertion_algo_1} below (left), the numbers indicate the order in which the rhombi are added.

\begin{figure}[ht]
\centering
\includegraphics[width=10cm]{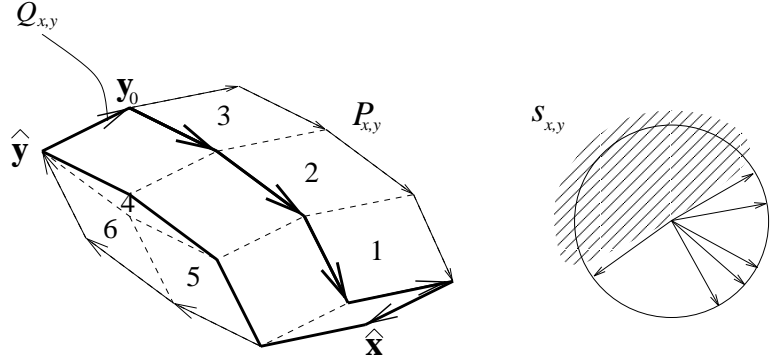}
\caption{\label{fig:insertion_algo_1}
Left: construction of the convex polygon $P_{x,y}$ from the train-track-convex subgraph $Q_{x,y}$. Right: corresponding angular sector $s_{x,y}$.}
\end{figure}

Using $1$, $2$, $3$, the fact that the part of $\gamma_{x,y}$ from $\yb_0$ to $\hat{\xb}$ is minimal, and the fact that the path $\gamma_{x,y}$ crosses the train-track $T$ twice, we deduce that the vertices $\hat{\yb}$, $\hat{\xb}$, $\yb_0$, $\xb_0$ are on the boundary of $P_{x,y}$. Moreover, we deduce that each edge on the boundary of $P_{x,y}$ from $\hat{\yb}$ to $\hat{\xb}$ passing through $\yb_0$ is parallel to a unique edge of $\gamma_{x,y}$. Let $e_1,\cdots,e_n$ denote this set of edges. Then, they encode the poles of the integrand $f_x(\lambda) f_y(-\lambda)\expo_{\xb,\yb}(\lambda)\log\lambda$ of $K^{-1}_{x,y}$, and define a sector of size $\pi$. We define $s_{x,y}$ to be an angular sector strictly included in the complement of this sector, see Figure \ref{fig:insertion_algo_1} (right) where the sector is given in dashed line.

Note that the above definition of angular sector $s_{x,y}$ is ambiguous when the path $\gamma_{x,y}$ crosses the train-track $T$ twice, without following its
boundary in between. This can only occur in the following four cases. Since all vertices involved belong to the same decoration, we omit the argument $\xb$ in the notations. Recall that $e^{i\alpha_{w_k}}=e^{i\alpha_{z_{k+1}}}$.
\begin{equation*}
(x,y)=
\begin{cases}
&(w_k,z_{k+1})\text{ then, $\gamma_{x,y}$ is the path $-e^{i\alpha_{z_{k+1}}},e^{i\alpha_{w_k}}$},\\
&(z_{k+1},w_k) \text{ then $\gamma_{x,y}$ is the path $-e^{i\alpha_{w_{k}}},e^{i\alpha_{z_{k+1}}}$},\\
&(w_k,w_k) \text{ then $\gamma_{x,y}$ is the path $-e^{i\alpha_{w_{k}}},e^{i\alpha_{w_{k}}}$},\\
&(z_{k+1},z_{k+1}) \text{ then $\gamma_{x,y}$ is the path $-e^{i\alpha_{z_{k+1}}},e^{i\alpha_{z_{k+1}}}$}.
\end{cases}
\end{equation*}
In these four cases, we set the conventions given by Figure \ref{fig6} below for the angular sector $s_{x,y}$. These are natural in the light of the proof of Theorem \ref{inverse}, see Section \ref{sec9}.\\

\begin{figure}[ht]
\begin{center}
\includegraphics[width=\linewidth]{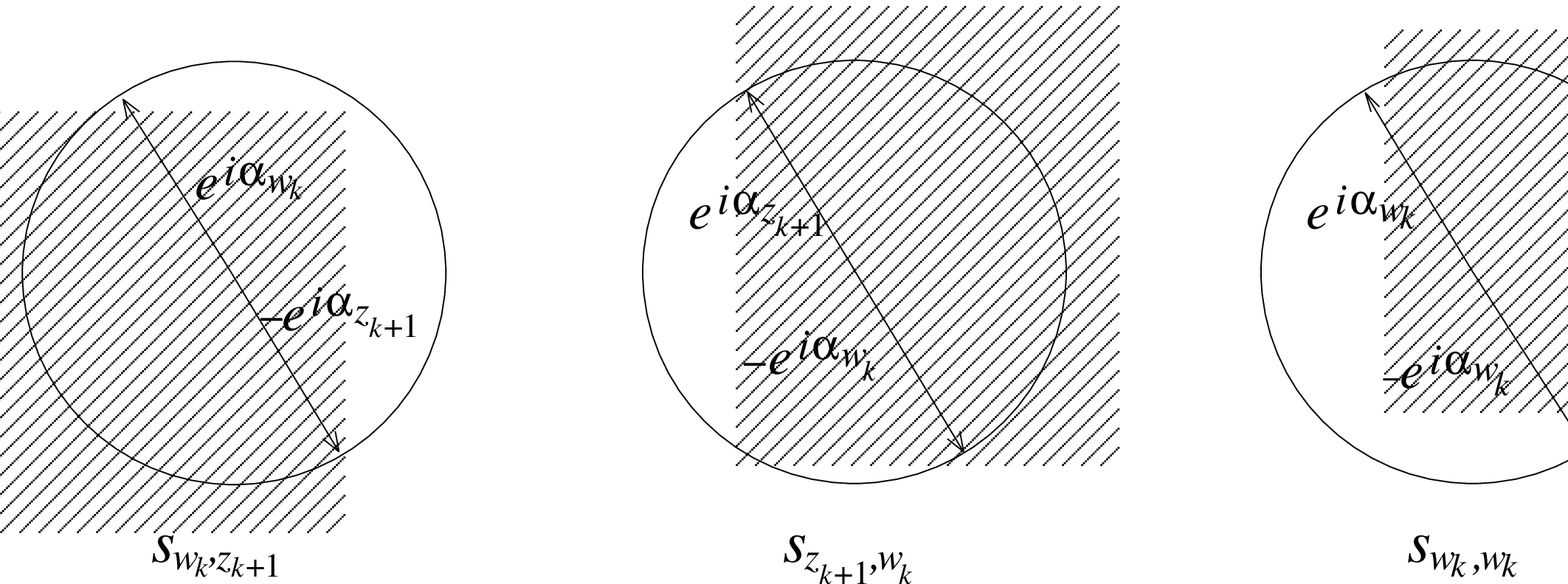}
\end{center}
\caption{\label{fig6} Definition of the angular sector $s_{x,y}$ (dashed line) in the four ambiguous instances of Case $2$: from left to right,
$(x,y)=(w_k,z_{k+1}),(z_{k+1},w_k), (w_k,w_k),(z_{k+},z_{k+1})$.}
\end{figure}

\underline{Case $3$: $T_x$ and $T_y$ have $2$ train-tracks in common}. This can only occur in two cases: either the vertices $x$ and $y$ are
neighbors in $\GD$ and the vertices $\xb$, $\yb$ are neighbors in $G$, meaning that $x=v_k(\xb)$ and $y=v_\l(\yb)$, with $k$ and $\l$ such that $v_k(\xb)\sim v_\l(\yb)$; or $x=y=v_k(\xb)$.

Suppose that $(x,y)=(v_k(\xb),v_\l(\yb))$. Then $f_{v_k(\xb)}(\lambda)$ and
$f_{v_\l(\yb)}(-\lambda)$ have the same poles $e^{i\alpha_{w_k(\xb)}}$,
$e^{i\alpha_{z_k(\xb)}}$. The exponential function $\expo_{\xb,\yb}(\lambda)$, cancels one of the pairs of poles, and adds two new ones $-e^{i\alpha_{w_k(\xb)}}$, $-e^{i\alpha_{z_k(\xb)}}$. Thus the path $\gamma_{x,y}$ follows the boundary of the rhombus associated to the edge $\xb\yb$ (note that this rhombus is already our convex polygon $P_{x,y}$). By convention, we choose the order of the edges of the path $\gamma_{x,y}$ as in Figure \ref{fig8} (left). This is natural in the light of the proof of Theorem \ref{inverse}, see Section \ref{sec9}. This defines a non-empty angular sector $s_{x,y}$, containing no pole of the integrand of $K^{-1}_{x,y}$, see Figure \ref{fig8} (right).

\begin{figure}[ht]
\begin{center}
\includegraphics[width=7cm]{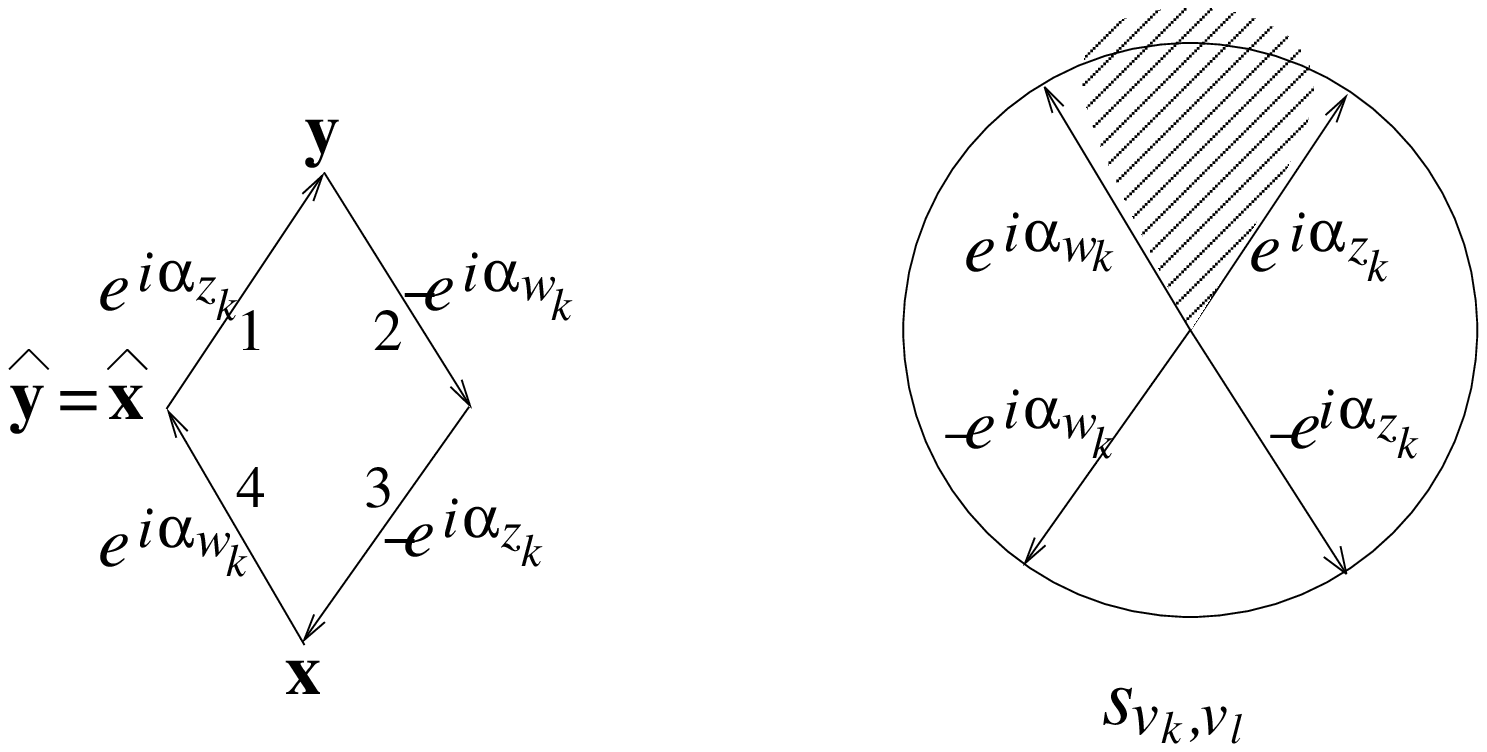}
\end{center}
\caption{\label{fig8} Left: Choice of path $\gamma_{v_k(\xb),v_\l(\yb)}$. Right: corresponding angular sector~$s_{v_k(\xb),v_\l(\yb)}$.}
\end{figure}

Suppose that $(x,y)=(v_k(\xb),v_k(\xb))$. Then $f_{v_k(\xb)}(\lambda)$ has poles $e^{i\alpha_{w_k(\xb)}}$,
$e^{i\alpha_{z_k(\xb)}}$, and $f_{v_k(\xb)}(-\lambda)$ has opposite poles. Moreover,
$\expo_{\xb,\yb}(\lambda)=1$, so that it cancels no pole. Thus the path $\gamma_{x,y}$ follows the boundary of the rhombus defined by the vectors $e^{i\alpha_{w_k(\xb)}}$ and $e^{i\alpha_{z_k(\xb)}}$. By convention, we choose the order of the edges of the path $\gamma_{x,y}$ as in Figure \ref{fig9} (left). This defines a non-empty angular sector $s_{x,y}$, containing no poles of the integrand of $K^{-1}_{x,y}$, see Figure \ref{fig9} (right).

\begin{figure}[ht]
\begin{center}
\includegraphics[width=7cm]{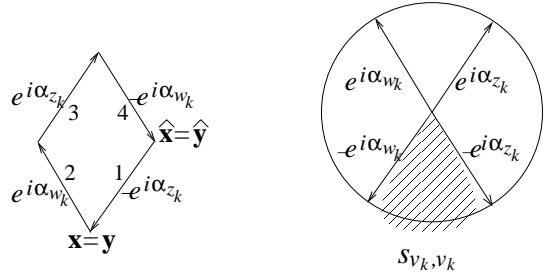}
\end{center}
\caption{\label{fig9} Left: Choice of path $\gamma_{v_k(\xb),v_k(\yb)}$. Right: corresponding angular sector~$s_{v_k(\xb),v_k(\xb)}$.}
\end{figure}

\begin{rem}\label{rem:rem1}
By the above definition, in all cases but the four exceptions to Case $2$ and Case $3$, the angular sector $s_{x,y}$ always contains the half-line $d_{x,y}$ starting from $0$, in the direction from $\hat{\xb}$ to $\hat{\yb}$. Thus, we can define the contour of integration $\C_{x,y}$ to be a closed contour oriented counterclockwise and avoiding the closed half-line $d_{x,y}$.
\end{rem}

\subsection{Proof of Theorem \ref{inverse}}\label{sec9}

In this section, we prove Theorem \ref{inverse}, {\em i.e.} $KK^{-1}=\Id$. Let $I$ and $C$ be the infinite matrices whose coefficients are the integral and constant part of $K^{-1}$ respectively:
\begin{align*}
I_{x,y}&=\frac{1}{(2\pi)^2} \oint_{\C_{x,y}}f_x(\lambda)f_y(-\lambda)\expo_{\xb,\yb}(\lambda)\log\lambda\ud\lambda,\\
C_{x,y}&=C_{x,y}.
\end{align*}
Our goal is to show that $(KK^{-1})_{x,y}=(KI)_{x,y}+(KC)_{x,y}=\delta_{x,y}$.
Let $x_1,\,x_2,\,x_3$ be the three neighbors of $x$ in $\GD$. Then,
\begin{align}\label{eq:KI}
(KI)_{x,y}&=\sum_{i=1}^3\oint_{C_{x_i,y}}K_{x,x_i} f_{x_i}(\lambda)f_y(-\lambda)\expo_{\xb_i,\yb}(\lambda)\log\lambda\frac{\ud\lambda}{(2\pi)^2},\nonumber\\
(KC)_{x,y}&=\sum_{i=1}^3 K_{x,x_i}C_{x_i,y}.
\end{align}
In Proposition \ref{prop:KI}, we handle the part $(KI)_{x,y}$: we prove that as soon as $x$ and $y$ do not belong to the same triangle of a decoration, $\bigcap_{i=1}^3 s_{x_i,y}\neq\emptyset$, so that by the general argument of Section \ref{subsec72}, $(KI)_{x,y}=0$; when $x$ and $y$ belong to the same triangle of a decoration, we explicitly compute $(KI)_{x,y}$. Then in Lemma \ref{lem:C}, we handle the part $(KC)_{x,y}$: we show that the constants $C_{x,y}$ are defined so that $(KK^{-1})_{x,y}=\delta_{x,y}$. The proof of these results is always complicated by the fact that $x$ can be of three types, `$w$',`$z$' or `$v$', {\em i.e} $x=w_k(\xb)$, $z_k(\xb)$ or $v_k(\xb)$ for some $k\in\{1,\cdots,d(\xb)\}$. The next proposition gives relations between these three cases for $KI$. Note that whenever no confusion occurs, we omit the argument $\xb$.

\begin{prop}\label{prop:casered}
For every vertex $y$ of $\GD$, the quantities $(KI)_{w_k,y}$, $(KI)_{z_k,y}$ and $(KI)_{v_k,y}$ satisfy the following:
\begin{align*}
&1.\; (KI)_{w_k,y}=-(KI)_{z_k,y}=\\
&\Bigl[\Bigl(-\oint_{\C_{z_k,y}}+\oint_{\C_{v_k,y}}\Bigr)f_{z_k}(\lambda)+\Bigl(-\oint_{\C_{w_k,y}}+\oint_{\C_{v_k,y}}\Bigr)f_{w_k}(\lambda)\Bigr]
f_y(-\lambda)\expo_{\xb,\yb}(\lambda)\log\lambda \frac{\ud\lambda}{(2\pi)^2}.\\
&2.\; (KI)_{v_k,y}=\\
&\Bigl[\Bigl(\oint_{\C_{z_k,y}}-\oint_{\C_{v_k,y}}\Bigr)f_{z_k}(\lambda)+\Bigl(-\oint_{\C_{w_k,y}}+\oint_{\C_{v_k,y}}\Bigr)f_{w_k}(\lambda)\Bigr]
f_y(-\lambda)\expo_{\xb,\yb}(\lambda)\log\lambda \frac{\ud\lambda}{(2\pi)^2}.\\
\end{align*}
$3.$ If $s_{w_k,y}\cap s_{v_k,y}\neq \emptyset$, then $(KI)_{w_k,y}=-(KI)_{z_k,y}=-(KI)_{v_k,y}=$
\begin{equation*}
\quad\quad\quad\quad\quad=\Bigl[\Bigl(-\oint_{\C_{z_k,y}}+\oint_{\C_{v_k,y}}\Bigr)f_{z_k}(\lambda)\Bigr]
f_y(-\lambda)\expo_{\xb,\yb}(\lambda)\log\lambda \frac{\ud\lambda}{(2\pi)^2}.
\end{equation*}
$4.$ If $s_{z_k,y}\cap s_{v_k,y}\neq \emptyset$, then $(KI)_{w_k,y}=-(KI)_{z_k,y}=(KI)_{v_k,y}=$
\begin{equation*}
\quad\quad\quad\quad\quad=\Bigl[\Bigl(-\oint_{\C_{w_k,y}}+\oint_{\C_{v_k,y}}\Bigr)f_{w_k}(\lambda)\Bigr]
f_y(-\lambda)\expo_{\xb,\yb}(\lambda)\log\lambda \frac{\ud\lambda}{(2\pi)^2}.
\end{equation*}
$5.$ If $s_{z_k,y}\cap s_{w_k,y}\cap s_{v_k,y}\neq \emptyset$, then
$
(KI)_{w_k,y}=(KI)_{z_k,y}=(KI)_{v_k,y}=0.
$
\end{prop}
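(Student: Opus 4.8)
The plan is to expand $(KI)_{x,y}=\sum_{i=1}^{3}K_{x,x_i}I_{x_i,y}$ over the three neighbours of $x$ and to feed in the algebraic identities already obtained in the proof of Proposition~\ref{prop:kernelK}. For $x=w_k$ the neighbours are $z_k,z_{k+1},v_k$ and I would use $K_{w_k,z_{k+1}}f_{z_{k+1}}=-f_{w_k}$, see \eqref{eq:kw01}, together with $f_{v_k}=f_{w_k}+f_{z_k}$; for $x=z_k$ the neighbours are $w_{k-1},w_k,v_k$ and one uses the analogous relation $K_{z_k,w_{k-1}}f_{w_{k-1}}=f_{z_k}$; for $x=v_k$ the neighbours are $z_k,w_k,v_\l(\xb')$ and one uses $\expo_{\xb',\yb}=\expo_{\xb',\xb}\,\expo_{\xb,\yb}$ together with \eqref{eq:Kv0}, namely $K_{v_k,v_\l}\expo_{\xb',\xb}f_{v_\l}=f_{w_k}-f_{z_k}$. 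Since $w_k,z_k,z_{k+1},v_k$ all lie in the decoration of $\xb$, every term in $(KI)_{w_k,y}$ and $(KI)_{z_k,y}$ carries the common discrete exponential $\expo_{\xb,\yb}$, and in $(KI)_{v_k,y}$ the extra factor $\expo_{\xb',\xb}$ attached to the $v_\l(\xb')$-term is exactly what \eqref{eq:Kv0} absorbs. Collecting the coefficients of $f_{z_k}(\lambda)$ and $f_{w_k}(\lambda)$ then produces the formulas asserted in Parts~$1$ and~$2$, and the equalities $(KI)_{w_k,y}=-(KI)_{z_k,y}$ and the stated expression for $(KI)_{v_k,y}$ — \emph{provided} the contours attached to the neighbours $z_{k+1},\,w_{k-1},\,v_\l(\xb')$ may be rewritten as $\C_{w_k,y},\,\C_{z_k,y},\,\C_{v_k,y}$ respectively.

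The key step is therefore this contour identification. I would argue that the whole construction of Section~\ref{subsec73} — the path $\gamma_{x,y}$, the train-track-convex subgraph $Q_{x,y}$, the convex polygon $P_{x,y}$ furnished by Lemma~\ref{lem:bea}, and hence the angular sector $s_{x,y}$ — depends only on the set of poles of the integrand $f_x(\lambda)f_y(-\lambda)\expo_{\xb,\yb}(\lambda)\log\lambda$ (equivalently, on the surviving train-tracks, and through them on the endpoints $\hat{\xb},\hat{\yb}$), by the independence statements in Section~\ref{subsubsec823}. Now $f_{w_k}$ and $f_{z_{k+1}}$ share the single pole $e^{i\alpha_{w_k}}=e^{i\alpha_{z_{k+1}}}$, and $f_{w_{k-1}}$ and $f_{z_k}$ share $e^{i\alpha_{w_{k-1}}}=e^{i\alpha_{z_k}}$, so the integrands $f_{z_{k+1}}(\lambda)f_y(-\lambda)\expo_{\xb,\yb}(\lambda)\log\lambda$ and $f_{w_k}(\lambda)f_y(-\lambda)\expo_{\xb,\yb}(\lambda)\log\lambda$ (and likewise the $w_{k-1}$ and $z_k$ pair) have identical pole sets, whence $\C_{z_{k+1},y}=\C_{w_k,y}$ and $\C_{w_{k-1},y}=\C_{z_k,y}$. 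The delicate case is $v_\l(\xb')$: after the cancellation induced by $\expo_{\xb',\xb}$ in \eqref{eq:Kv0}, the $v_\l(\xb')$-integrand $f_{v_\l}(\lambda)f_y(-\lambda)\expo_{\xb',\yb}(\lambda)\log\lambda=K_{v_k,v_\l}^{-1}(f_{w_k}-f_{z_k})(\lambda)f_y(-\lambda)\expo_{\xb,\yb}(\lambda)\log\lambda$ has poles exactly at $e^{i\alpha_{w_k}},e^{i\alpha_{z_k}}$ together with those of $\expo_{\xb,\yb}$ and of $f_y(-\lambda)$, which is precisely the pole set of the $(v_k,y)$-integrand $f_{v_k}(\lambda)f_y(-\lambda)\expo_{\xb,\yb}(\lambda)\log\lambda$; hence $\C_{v_\l(\xb'),y}=\C_{v_k,y}$. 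In the exceptional configurations singled out in Cases~$2$ and~$3$ of Section~\ref{subsec73} one simply invokes the conventions fixed there. Granting these identifications, Parts~$1$ and~$2$ are pure bookkeeping.

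For Parts~$3$, $4$ and~$5$ I would use the elementary deformation fact that if $F$ is a rational function with $F=O(\lambda^{-2})$ at infinity and $F$ bounded near $0$, and if $\C,\C'$ are counterclockwise simple closed curves both enclosing every pole of $F$ and both avoiding one common half-line $d$ issued from $0$ (carrying the branch cut of $\log\lambda$), then $\oint_{\C}F(\lambda)\log\lambda\,\ud\lambda=\oint_{\C'}F(\lambda)\log\lambda\,\ud\lambda$, since in $\CC\setminus d$ the integrand is single-valued and meromorphic and both curves wind once around each of its poles. Applying this with $F=f_{w_k}(\lambda)f_y(-\lambda)\expo_{\xb,\yb}(\lambda)$: when $s_{w_k,y}\cap s_{v_k,y}\neq\emptyset$ one picks $d$ inside that intersection, and since $\C_{v_k,y}$ encloses all poles of the $(v_k,y)$-integrand it a fortiori encloses all poles of $F\log\lambda$; thus $\oint_{\C_{w_k,y}}f_{w_k}(\lambda)f_y(-\lambda)\expo_{\xb,\yb}(\lambda)\log\lambda\,\ud\lambda=\oint_{\C_{v_k,y}}f_{w_k}(\lambda)f_y(-\lambda)\expo_{\xb,\yb}(\lambda)\log\lambda\,\ud\lambda$, which kills the $f_{w_k}$-bracket in the formulas of Parts~$1$ and~$2$ and yields Part~$3$; Part~$4$ is the mirror statement for $f_{z_k}$ under $s_{z_k,y}\cap s_{v_k,y}\neq\emptyset$; and Part~$5$ follows by combining both, so that $(KI)_{w_k,y},(KI)_{z_k,y},(KI)_{v_k,y}$ all vanish. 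I expect the contour-identification step of the middle paragraph to be the main obstacle: it is where the combinatorial care of Section~\ref{subsec73} (which poles survive, where $\hat{\xb}$ and $\hat{\yb}$ land, and the handling of the exceptional pairs) really enters, the rest being residue calculus and linear algebra.
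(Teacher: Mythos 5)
Your proof follows essentially the same route as the paper: expand $(KI)_{x,y}$ over the three neighbours of $x$, substitute the algebraic relations from the proof of Proposition~\ref{prop:kernelK} to obtain the bracketed expressions, reduce Parts~$1$ and~$2$ to the contour identifications $\C_{z_{k+1},y}=\C_{w_k,y}$, $\C_{w_{k-1},y}=\C_{z_k,y}$, $\C_{v_\l(\xb'),y}=\C_{v_k,y}$, and deduce Parts~$3$--$5$ by contour deformation. This is exactly the paper's strategy.

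One point where you take a shortcut that the paper does not: you assert that $\C_{v_\l(\xb'),y}=\C_{v_k,y}$ follows because the two integrands have the same pole set, ``the construction depending only on the set of poles.'' That independence principle is true, but it is not a stated lemma in the paper, and it is not immediate from Section~\ref{subsubsec823}: the paths $\gamma_{v_k(\xb),y}$ and $\gamma_{v_\l(\xb'),y}$ are built from different base vertices ($\xb$ versus $\xb'$), and the construction first produces endpoints $\hat{\xb},\hat{\yb}$ and only then a path of edge-directions between them. The paper therefore does not argue at the level of pole sets; it verifies directly, by casework on how many of $T_x^1,T_x^2$ separate $\yb$ from $\xb$, that the endpoints of $\gamma_{v_k(\xb),y}$ and $\gamma_{v_\l(\xb'),y}$ coincide and then that the resulting sectors agree, treating the four exceptional configurations of Case~$2$ and the Case~$3$ configurations separately against the fixed conventions. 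You flag this as ``the main obstacle'' but do not close it; your argument would be complete once you either (i) prove the independence principle you invoke (the sector is a function of the surviving pole multiset, given the conventions), or (ii) replicate the endpoint verification. Everything else --- the use of \eqref{eq:kw01}, \eqref{eq:Kv0}, $f_{v_k}=f_{w_k}+f_{z_k}$, and the deformation argument killing the $f_{w_k}$- or $f_{z_k}$-bracket when the relevant sectors intersect --- is correct and matches the paper.
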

\begin{proof}
Points $3$, $4$, $5$ are a consequence of $1$ and $2$, and the general argument of Section \ref{subsec72}. Let us prove $1$ and $2$. In equations \eqref{eq:Kw}, \eqref{eq:Kz}, \eqref{eq:Kv0}, \eqref{eq:Kv} of the proof of Proposition \ref{prop:kernelK}, we explicitly computed
\begin{equation*}
\sum_{i=1}^3 K_{x,x_i}f_{x_i}(\lambda)\expo_{\xb_i,\yb}(\lambda),
\end{equation*}
for $x=w_k(\xb)$, $z_k(\xb)$ and $v_k(\xb)$, respectively. Using this, Equation \eqref{eq:KI}, and the fact that $f_{v_k}(\lambda)=f_{w_k}(\lambda)+f_{z_k}(\lambda)$, we obtain the following.

\underline{\em If $x=w_k(\xb)$}, then the three neighbors of $x$ are $x_1=z_k(\xb)$, $x_2=z_{k+1}(\xb)$, $x_3=v_k(\xb)$, and by \eqref{eq:Kw} we have:
\begin{align}\label{eq:KIw}
&(KI)_{w_k,y}=\\
&\Bigl(-\oint_{\C_{z_k,y}} f_{z_k}(\lambda)-\oint_{\C_{z_{k+1},y}} f_{w_k}(\lambda)+\oint_{\C_{v_k,y}} [f_{w_k}(\lambda)+f_{z_k}(\lambda)] \Bigr)
f_y(-\lambda)\expo_{\xb,\yb}(\lambda)\log\lambda \frac{\ud\lambda}{(2\pi)^2}.\nonumber
\end{align}
\underline{\em If $x=z_k(\xb)$}, then the three neighbors of $x$ are $x_1=w_{k-1}(\xb),x_2=w_k(\xb),x_3=v_k(\xb)$, and by \eqref{eq:Kz} we have:
\begin{align}\label{eq:KIz}
&(KI)_{z_k,y}=\\
&\Bigl(\oint_{\C_{w_{k-1},y}} f_{z_k}(\lambda)+\oint_{\C_{w_k,y}} f_{w_k}(\lambda)-\oint_{\C_{v_k,y}} [f_{w_k}(\lambda)+f_{z_k}(\lambda)] \Bigr)
f_y(-\lambda)\expo_{\xb,\yb}(\lambda)\log\lambda \frac{\ud\lambda}{(2\pi)^2}.\nonumber
\end{align}
\underline{\em If $x=v_k(\xb)$}, then the three neighbors of $x$ are $x_1=z_k(\xb),x_2=w_k(\xb),x_3=v_\l(\xb')$, where $\l$ and $\xb'$ are such that $v_k(\xb)\sim v_\l(\xb')$ in $\GD$. Recalling that the index $\l$ refers to the decoration $\xb'$, we also omit the arguments $\xb$ and $\xb'$. Using \eqref{eq:Kv0} and \eqref{eq:Kv}, we have:
\begin{align}\label{eq:KIv}
&(KI)_{v_k,y}=\\
&\Bigl(\oint_{\C_{z_k,y}} f_{z_k}(\lambda)-\oint_{\C_{w_k,y}} f_{w_k}(\lambda)+\oint_{\C_{v_\l,y}} [f_{w_k}(\lambda)-f_{z_k}(\lambda)] \Bigr)
f_y(-\lambda)\expo_{\xb,\yb}(\lambda)\log\lambda \frac{\ud\lambda}{(2\pi)^2}.\nonumber
\end{align}
As a consequence of \eqref{eq:KIw}, \eqref{eq:KIz}, \eqref{eq:KIv},  $1$ and $2$ of Proposition \ref{prop:casered} are proved, if we show that, for every vertex $y$ of $\GD$,
\begin{equation*}
\C_{z_{k+1},y}=\C_{w_k,y},\quad\text{ and }\quad \C_{v_k,y}=\C_{v_\l,y},
\end{equation*}
which is equivalent to proving that $s_{z_{k+1},y}=s_{w_k,y}$, and $s_{v_k,y}=s_{v_\l,y}$. Recall that the construction of the sector $s_{x,y}$, given in Section \ref{subsec73},
relies on the path $\gamma_{x,y}$ encoding the poles of the integrand of $K^{-1}_{x,y}$. Recall also that, in all cases but the four exceptions to Case $2$, and Case $3$ of Section \ref{subsubsec823}, when two pairs of vertices of $\GD$ have the same path $\gamma_{x,y}$, then they have the same sector $s_{x,y}$.

\underline{\em Proof of $s_{z_{k+1},y}=s_{w_k,y}$}.
The vertices $z_{k+1}$ and $w_k$ belong to the the same decoration $\xb$, so that the exponential function has the same poles in both cases. Moreover, the functions $f_{z_{k+1}}(\lambda)$ and $f_{w_k}(\lambda)$ have the same pole $e^{i\alpha_{z_{k+1}}}=e^{i\alpha_{w_k}}$. As a consequence, by construction, the sectors $s_{z_{k+1},y}$ and $s_{w_k,y}$ are the same in all cases but the four exceptions to Case $2$, which have to be checked `manually'. Note that Case $3$ cannot occur when $x$ is of type `$w$' or `$z$'. Referring to Figure \ref{fig6}, we see that by definition:
\begin{equation*}
s_{z_{k+1},z_{k+1}}= s_{w_k,z_{k+1}},\quad \text{ and }\quad s_{z_{k+1},w_k}= s_{w_k,w_k}.
\end{equation*}

\underline{\em Proof of $s_{v_k,y}=s_{v_\l,y}$}.
The vertex $v_k$ (resp. $v_\l$) belongs to the decoration $\xb$ (resp. $\xb'$), and $k,\l,\xb,\xb'$, are such that $v_k(\xb)\sim v_\l(\xb')$ in $\GD$. Let us prove that the paths $\gamma_{v_k(\xb),y}$ and $\gamma_{v_\l(\xb'),y}$, encoding the poles of the integrand, are the same. The function $f_{v_k(\xb)}(\lambda)$ has poles $e^{i\alpha_{w_k(\xb)}}$, $e^{i\alpha_{z_k(\xb)}}$, and the function $f_{v_\l(\xb')}(\lambda)$ has opposite poles $e^{i\alpha_{w_\l(\xb')}}=-e^{i\alpha_{w_k(\xb)}}$, $e^{i\alpha_{z_\l(\xb')}}=-e^{i\alpha_{z_k(\xb)}}$. As a consequence the vertices $v_k(\xb)$ and $v_{\l}(\xb')$ define the same two train-tracks denoted by $T_x=\{T_x^1,T_x^2\}$. Let $T_y$ be the one/two train-track(s) associated to the vertex $y$. Since the construction of the path $\gamma_{x,y}$ is split according to the intersection properties of $T_x$ and $T_y$, we use the same decomposition here. 

\underline{\em If $T_x$ and $T_y$ have $0$ train-track in common}. Then the paths $\gamma_{v_k(\xb),y}$ and $\gamma_{v_\l(\xb'),y}$ are minimal, and are constructed according to Case $1$ of Section \ref{subsubsec823}. Recall that in Case~$1$, the construction of the angular sector $s_{x,y}$ from the minimal path $\gamma_{x,y}$ is independent of the choice of minimal path of $\GR$ from $\hat{\yb}$ (the initial vertex) to $\hat{\xb}$ (the final vertex). Moreover, by construction, the initial vertex of $\gamma_{v_k(\xb),y}$ and of $\gamma_{v_\l(\xb'),y}$ is the same, let us denote it by $\hat{\yb}$. So we are left with proving that the ending vertex of both paths is the same.
\begin{itemize}
\item {\em If $T_x^1, \,T_x^2$ do not separate $\yb$ from $\xb$}. Refer to Figure \ref{proof3}. Then, $\expo_{\xb,\yb}(\lambda)$ does not cancel the poles of $f_{v_k(\xb)}(\lambda)$, and the path $\gamma_{v_k(\xb),y}$ is obtained by concatenating a minimal path from $\hat{\yb}$ to $\xb$, and the vectors $e^{i\alpha_{w_k(\xb)}}$, $e^{i\alpha_{z_k(\xb)}}$. The ending vertex is $\widehat{v_k(\xb)}=\xb+e^{i\alpha_{w_k(\xb)}}+e^{i\alpha_{z_k(\xb)}}=\xb'$. When the train-tracks $T_x^1$ and $T_x^2$ do not separate $\yb$ from $\xb$, then they separate $\yb$ from $\xb'$. As a consequence, $\expo_{\xb',\yb}(\lambda)$ cancels both poles of $f_{v_\l(\xb')}(\lambda)$, and the path $\gamma_{v_\l(\xb'),y}$ is a minimal path from $\hat{\yb}$ to $\xb'$.

\begin{figure}[ht]
\begin{center}
\includegraphics[width=10.5cm]{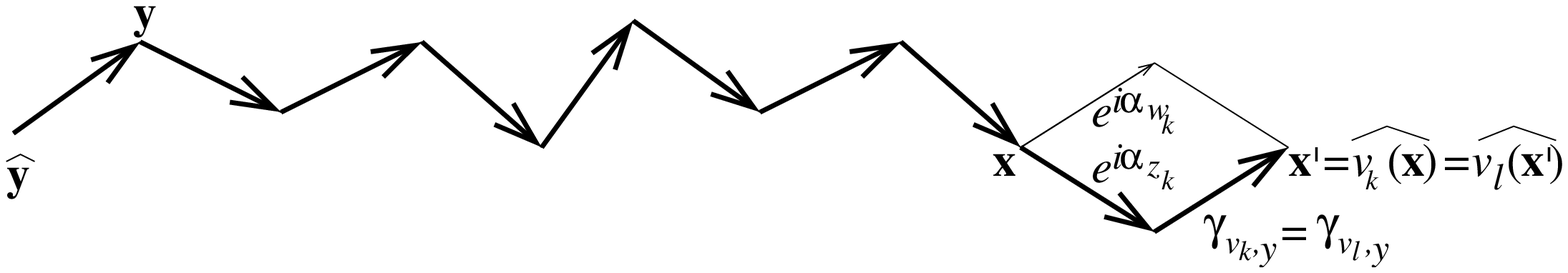}
\end{center}
\caption{\label{proof3}$T_x$ and $T_y$ have $0$ train-track in common. Construction of the paths $\gamma_{v_k(\xb),y}$ and $\gamma_{v_\l(\xb'),y}$, when $T_x^1, \,T_x^2$ do not separate $\yb$ from $\xb$.}
\end{figure}

\item{\em If only $T_x^1$ separates $\yb$ from $\xb$}. Refer to Figure \ref{proof4}. Then, $\expo_{\xb,\yb}(\lambda)$ cancels the pole $e^{i\alpha_{w_k(\xb)}}$ of
$f_{v_k(\xb)}(\lambda)$, and the pole $e^{i\alpha_{z_k(\xb)}}$ remains. The path $\gamma_{v_k(\xb),y}$ is obtained by concatenating a minimal path from $\hat{\yb}$ to $\xb$ and the vector $e^{i\alpha_{z_k(\xb)}}$. The ending vertex is $\widehat{v_k(\xb)}=\xb+e^{i\alpha_{z_k(\xb)}}$. When only $T_x^1$ separates $\yb$ from $\xb$, then only $T_x^2$ separates $\yb$ from $\xb'$. As a consequence, $\expo_{\xb',\yb}(\lambda)$ cancels the pole $-e^{i\alpha_{z_k(\xb)}}$ of $f_{v_\l(\xb')}(\lambda)$, and the pole $-e^{i\alpha_{w_k(\xb)}}$ remains. The path $\gamma_{v_\l(\xb'),y}$ is obtained by concatenating a minimal path from $\hat{\yb}$ to $\xb'$, and the vector $-e^{i\alpha_{w_k(\xb)}}$. The ending vertex is, $\widehat{v_\l(\xb')}=\xb'-e^{i\alpha_{w_k(\xb)}}=\xb+e^{i\alpha_{z_k(\xb)}}=\widehat{v_k(\xb)}$.

\begin{figure}[ht]
\begin{center}
\includegraphics[width=10.5cm]{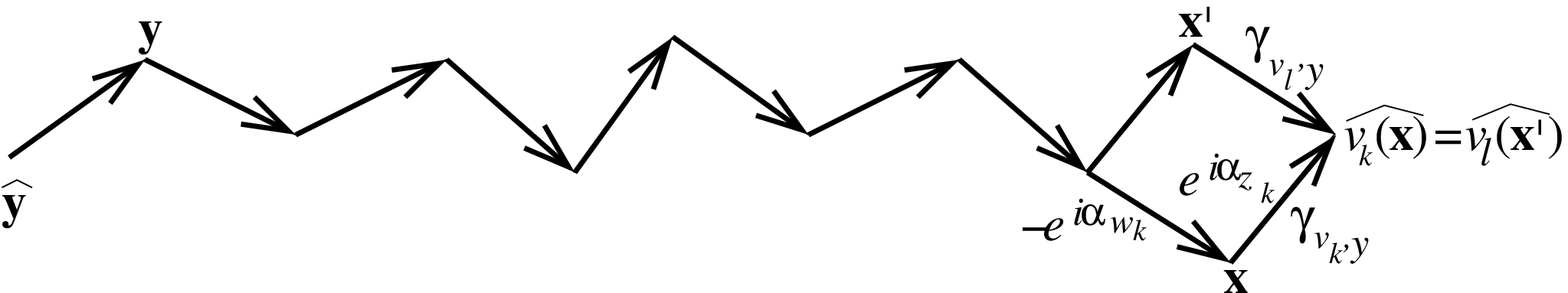}
\end{center}
\caption{\label{proof4} $T_x$ and $T_y$ have $0$ train-track in common. Construction of the paths $\gamma_{v_k(\xb),y}$ and $\gamma_{v_\l(\xb'),y}$, when only $T_x^1$ separates $\yb$ from $\xb$.}
\end{figure}

All other cases are symmetric.
\end{itemize}

\underline{\em If $T_x$ and $T_y$ have $1$ train-track $T$ in common}. Then $\gamma_{v_k(\xb),y}$ and $\gamma_{v_\l(\xb'),y}$ are paths constructed according to Case $2$ of Section \ref{subsubsec823}. Recall that in Case $2$, the path $\gamma_{x,y}$ starts from the initial vertex $\hat{\yb}$ with a parallel direction of the common train-track $T$, then follows its boundary for a positive number of steps, and ends with the opposite direction up to the ending vertex $\hat{\xb}$. By construction, the initial vertex of $\gamma_{v_k(\xb),y}$ and of $\gamma_{v_\l(\xb'),y}$ is the same, let us denote it by $\hat{\yb}$. We are left with showing that both paths have the same ending vertex. The argument is similar to the previous case. We do not repeat it here, but only provide Figures \ref{proof5} and \ref{proof6} below, which illustrate it.

\begin{figure}[ht]
\begin{center}
\includegraphics[width=9.3cm]{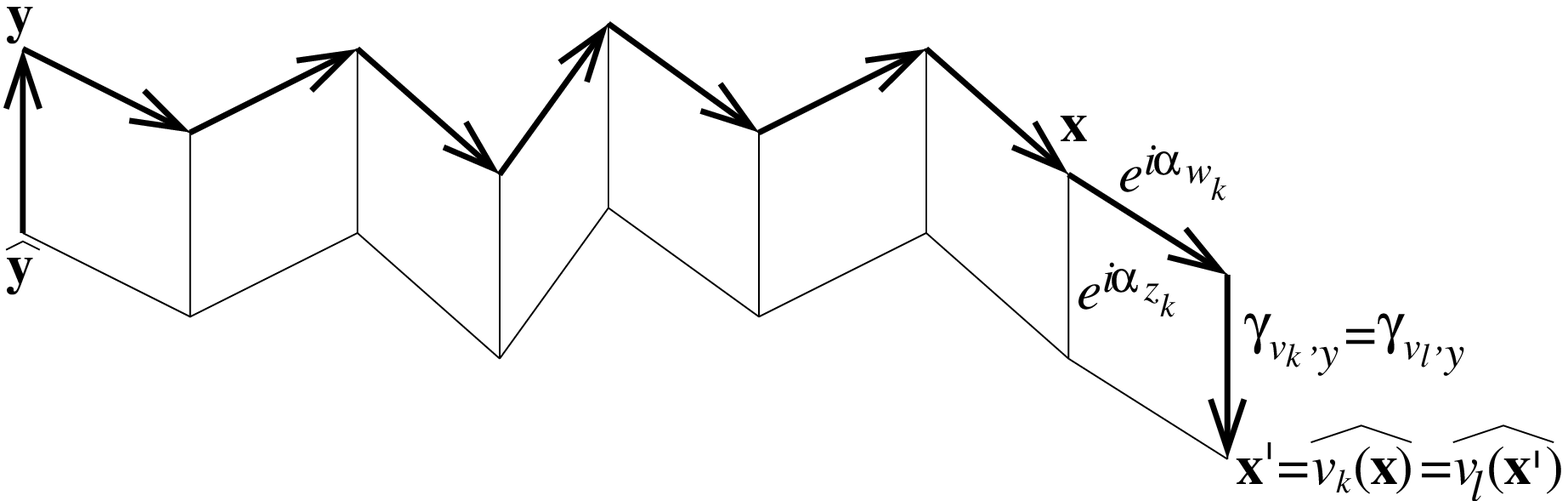}
\end{center}
\caption{\label{proof5} $T_x$ and $T_y$ have $1$ train-track in common. Construction of the paths $\gamma_{v_k(\xb),y}$ and $\gamma_{v_\l(\xb'),y}$, when $T_x^1, \,T_x^2$ do not separate $\yb$ from $\xb$.}
\end{figure}

\begin{figure}[ht]
\begin{center}
\includegraphics[width=9.3cm]{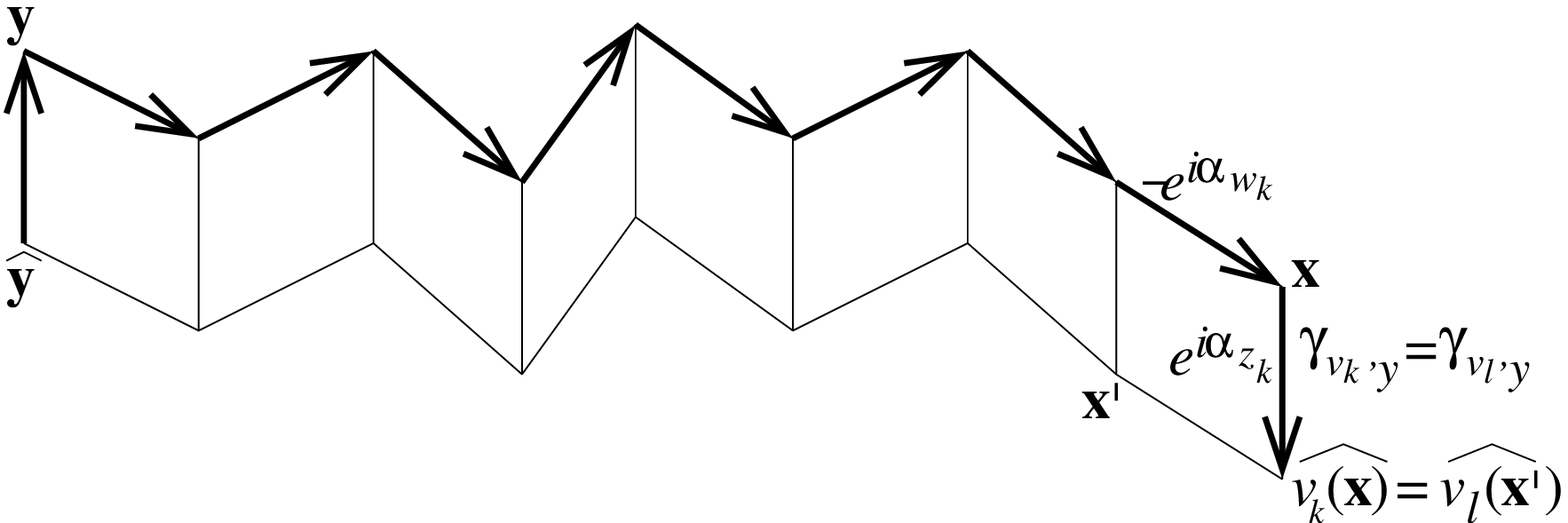}
\end{center}
\caption{\label{proof6} $T_x$ and $T_y$ have $1$ train-track in common. Construction of the paths $\gamma_{v_k(\xb),y}$ and $\gamma_{v_\l(\xb'),y}$, when only $T_x^1$ separates $\yb$ from $\xb$.}
\end{figure}
Note that the specific instances of Case $2$ do not occur, since $x$ is of type `$v$'.

\underline{\em If $T_x$ and $T_y$ have $2$ train-tracks in common}. This can only occur when $y=v_k(\xb)$ or $y=v_\l(\xb')$. So we need to check that:
\begin{equation*}
s_{v_k(\xb),v_k(\xb)}=s_{v_\l(\xb'),v_k(\xb)},\quad\text{ and }\quad s_{v_k(\xb),v_\l(\xb')}=s_{v_\l(\xb'),v_\l(\xb')}.
\end{equation*}
Referring to Figures \ref{fig8} and \ref{fig9}, we see that this is true by definition.
\end{proof}

We now state Proposition \ref{prop:KI} computing the matrix product $KI$ of Equation \eqref{eq:KI}. Whenever no confusion occurs, we drop the argument $\xb$ in $w_k(\xb)$, $z_k(\xb)$, $v_k(\xb)$.

\begin{prop}\label{prop:KI}
For all vertices $x$ and $y$ of $\GD$, we have:
\begin{equation*}
(KI)_{x,y}=
\begin{cases}
\frac{1}{2}&\text{ if
$(x,y)=(w_k,w_k),\,(z_k,z_k),\,(v_k,w_k),\,(v_k,z_k)$}\\
-\frac{1}{2}&\text{ if $(x,y)=(z_k,w_k),\,(w_k,z_k)$}\\
1&\text{ if $(x,y)=(v_k,v_k)$}\\
0&\text{ else}.
\end{cases}
\end{equation*}
\end{prop}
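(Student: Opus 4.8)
The plan is to split the computation according to whether or not $x$ and $y$ lie in the same triangle of a decoration, and in either regime to reduce everything to the three quantities $(KI)_{w_k,y}$, $(KI)_{z_k,y}$, $(KI)_{v_k,y}$ controlled by Proposition~\ref{prop:casered}. Let $k$ be the index of the triangle of the decoration $\xb$ containing $x$. The neighbours of $w_k$ in $\GD$ are $z_k,z_{k+1},v_k$, those of $z_k$ are $w_{k-1},w_k,v_k$, and those of $v_k$ are $z_k,w_k,v_\l$ (with $v_k(\xb)\sim v_\l(\xb')$). By the identities $s_{z_{k+1},y}=s_{w_k,y}$, $s_{w_{k-1},y}=s_{z_k,y}$ and $s_{v_\l,y}=s_{v_k,y}$ established inside the proof of Proposition~\ref{prop:casered}, the only angular sectors occurring in $(KI)_{w_k,y}$, $(KI)_{z_k,y}$ and $(KI)_{v_k,y}$ are $s_{z_k,y}$, $s_{w_k,y}$ and $s_{v_k,y}$; hence the whole statement reduces to understanding these three sectors.

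\emph{The case $y\notin\{v_k,w_k,z_k\}$ (which covers all pairs except the nine inside the triangle).} By point~$5$ of Proposition~\ref{prop:casered} it suffices to prove the geometric statement $s_{z_k,y}\cap s_{w_k,y}\cap s_{v_k,y}\neq\emptyset$, for then $(KI)_{x,y}=0$. I would deduce this from the construction of Section~\ref{subsec73}, analysing the intersection pattern of the train-track families $T_x$ and $T_y$ as in the proof of Proposition~\ref{prop:casered}. When $T_{v_k}=\{T_{w_k},T_{z_k}\}$ is disjoint from $T_y$, all three paths $\gamma_{z_k,y}$, $\gamma_{w_k,y}$, $\gamma_{v_k,y}$ are minimal, and the set of poles recorded by $\gamma_{v_k,y}$ is the union of those recorded by $\gamma_{z_k,y}$ and by $\gamma_{w_k,y}$ (a pole $e^{i\alpha_{z_k}}$, resp.\ $e^{i\alpha_{w_k}}$, survives cancellation by the exponential in one family iff it does in the other). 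By the convex-completion algorithm of Lemma~\ref{lem:bea}, the directions of the boundary arc of $P_{v_k,y}$ carrying the poles therefore contain those of the corresponding arcs of $P_{z_k,y}$ and $P_{w_k,y}$; and since $P_{v_k,y}$ has opposite sides parallel, those directions span an arc of width at most $\pi$, whose complement is a nonempty open arc. Choosing $s_{z_k,y},s_{w_k,y},s_{v_k,y}$ inside this common complement gives the claim. The finitely many remaining configurations — where $T_y$ meets $T_{v_k}$, so that one of the three sectors is fixed by the ad hoc conventions of Figures~\ref{fig6} and~\ref{fig8} (for instance $y=z_{k+1}$, or $y$ a $v$-vertex adjacent to $v_k$, or $\xb=\yb$ with $y$ in another triangle) — are checked by hand on the relevant small polygon.

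\emph{The case $x,y\in\{w_k,z_k,v_k\}$ with $\xb=\yb$, so $\expo_{\xb,\xb}\equiv1$.} Here I would compute directly. By Proposition~\ref{prop:casered}, each of the nine quantities $(KI)_{x,y}$ is a signed sum of terms
\[
\frac1{(2\pi)^2}\Bigl(-\oint_{\C_1}+\oint_{\C_2}\Bigr)g(\lambda)\log\lambda\,\ud\lambda ,
\]
where $g$ is a product of one of $f_{w_k}(\lambda),f_{z_k}(\lambda)$ with one of $f_{w_k}(-\lambda),f_{z_k}(-\lambda)$, and $\C_1,\C_2$ enclose the poles of $g$ but avoid two different angular sectors. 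Since these integrals use the same $g$ but branches of $\log$ whose cuts lie in the two sectors, their difference is governed by residues: up to a sign coming from orientation,
\[
\frac1{(2\pi)^2}\Bigl(\oint_{\C_1}-\oint_{\C_2}\Bigr)g(\lambda)\log\lambda\,\ud\lambda=\mp\sum_{p}\mathrm{Res}_{p}\,g ,
\]
the sum running over the poles $p$ of $g$ in the angular region swept between the two cuts. The residues are elementary: writing $a=e^{i\alpha_{w_k}}$ and $b=e^{i\alpha_{z_k}}$, one finds $\mathrm{Res}_{\pm a}\,f_{w_k}(\lambda)f_{w_k}(-\lambda)=\mp\tfrac12$, $\mathrm{Res}_{\pm b}\,f_{z_k}(\lambda)f_{z_k}(-\lambda)=\mp\tfrac12$, and $\mathrm{Res}_{b}\,f_{z_k}(\lambda)f_{w_k}(-\lambda)=-\mathrm{Res}_{-a}\,f_{z_k}(\lambda)f_{w_k}(-\lambda)=\frac{e^{i(\alpha_{w_k}+\alpha_{z_k})/2}}{e^{i\alpha_{w_k}}+e^{i\alpha_{z_k}}}=\frac1{2\cos\theta_k}$, with the symmetric statement for $f_{w_k}(\lambda)f_{z_k}(-\lambda)$. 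Feeding in which pole of $g$ lies on which side of which sector — exactly the information encoded by the conventions of Figures~\ref{fig6}, \ref{fig8} and~\ref{fig9} — the ``mixed'' residues $\pm\frac1{2\cos\theta_k}$ cancel in pairs, leaving $(KI)_{w_k,w_k}=(KI)_{z_k,z_k}=\tfrac12$, $(KI)_{w_k,z_k}=(KI)_{z_k,w_k}=-\tfrac12$, $(KI)_{v_k,w_k}=(KI)_{v_k,z_k}=\tfrac12$, $(KI)_{v_k,v_k}=1$ and $(KI)_{w_k,v_k}=(KI)_{z_k,v_k}=0$, which is the asserted table.

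\emph{Expected main obstacle.} The residue bookkeeping of the second step is routine once the conventions of Figures~\ref{fig6}, \ref{fig8}, \ref{fig9} are fixed. The genuine difficulty is the geometric claim $s_{z_k,y}\cap s_{w_k,y}\cap s_{v_k,y}\neq\emptyset$ for $y$ outside the $k$-th triangle: the sectors are defined through the convex-completion algorithm, not by an explicit formula, so one must argue carefully — case by case according to the intersection pattern of $T_x$ and $T_y$ — that the three sectors built from the three closely related paths $\gamma_{z_k,y},\gamma_{w_k,y},\gamma_{v_k,y}$ still admit a common direction, with particular care for the handful of configurations governed by the ad hoc conventions of Case~$2$ and Case~$3$ of Section~\ref{subsec73}.
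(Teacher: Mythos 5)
Your overall architecture matches the paper's: first show that when $y$ does not lie in the $k$-th triangle of the decoration of $\xb$, the three sectors $s_{w_k,y},s_{z_k,y},s_{v_k,y}$ have non-empty intersection so $(KI)_{x,y}=0$ by point $5$ of Proposition~\ref{prop:casered}; then compute the remaining nine entries by hand. The residue reformulation of the second step is a legitimate reorganization of the paper's explicit log-branch evaluations, and it does produce the right table (including the cancellation of the $\pm\tfrac1{2\cos\theta_k}$ terms, which the paper sees as the $\cos$ terms cancelling in its $(KI)_{v_k,v_k}$ computation). Your Case-$1$ argument (disjoint $T_x$, $T_y$) is the paper's argument in compressed form: you are implicitly using the fact, stated explicitly in the paper, that if $\gamma_1\subset\gamma_2$ as minimal paths then $\partial P_1\subset\partial P_2$, which is what justifies the inclusion of directions.

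The genuine gap is your handling of the intermediate regime. You write that the configurations in which $T_y$ meets $T_{v_k}$ are ``finitely many'' and can be ``checked by hand on the relevant small polygon,'' with examples $y=z_{k+1}$, $y=v_\ell$, or $\xb=\yb$ with $y$ in another triangle. That is wrong in an essential way: there are \emph{infinitely} many vertices $y$ of $\GD$ for which $T_y$ shares exactly one train-track with $T_{v_k}=\{T_x^1,T_x^2\}$, namely every $y$ lying along either train-track $T_x^1$ or $T_x^2$ at arbitrary distance from $\xb$. For all those $y$ the sectors are \emph{not} given by the ad hoc conventions of Figures~\ref{fig6}--\ref{fig9}; they are produced by the convex-completion construction of Case~$2$ of Section~\ref{subsubsec823}, applied to paths $\gamma_{z_k,y}$, $\gamma_{v_k,y}$ that are not minimal (they wrap once around the shared train-track). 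Because these paths are non-minimal, your ``union of recorded poles'' argument for Case~$1$ does not carry over directly. The paper supplies a separate geometric argument here (illustrated by Figure~\ref{proof2}): with $\gamma_{w_k,y}$ minimal and $\gamma_{z_k,y}$, $\gamma_{v_k,y}$ both wrapping the common train-track, one shows $s_{v_k,y}=s_{z_k,y}$ and $s_{v_k,y}\subset s_{w_k,y}$, so the intersection is $s_{v_k,y}\neq\emptyset$. Your proposal needs this argument (or an equivalent one) for the infinite family of $y$ in Case~$2$; as written, it is missing.
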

\begin{proof}

Let $x$ be a vertex of $\GD$. Then $x=w_k(\xb)$, $z_k(\xb)$ or $v_k(\xb)$ for some $k~\in~\{1,\cdots,d(\xb)\}$. Let us show that whenever $y\notin\{w_k(\xb),z_k(\xb),v_k(\xb)\}$, then the intersection of the three angular sectors $s_{w_k(\xb),y}\cap s_{z_k(\xb),y}\cap s_{v_k(\xb),y}$ is non empty. Using $5$ Proposition \ref{prop:casered}, this implies that $(KI)_{x,y}=0$.

Recall that the construction of the angular sector $s_{x,y}$, given in Section \ref{subsubsec823}, relies on the path $\gamma_{x,y}$ encoding the poles of the integrand of $K^{-1}_{x,y}$. The three vertices $w_k(\xb)$, $z_k(\xb)$, $v_k(\xb)$ all belong to the same decoration $\xb$, so that the exponential function is the same in all three cases. The function $f_{w_k(\xb)}(\lambda)$ has pole $e^{i\alpha_{w_k(\xb)}}$, the function $f_{z_k(\xb)}(\lambda)$ has pole $e^{i\alpha_{z_k(\xb)}}$, and the function $f_{v_k(\xb)}(\lambda)$ has poles $e^{i\alpha_{w_k(\xb)}}$, $e^{i\alpha_{z_k(\xb)}}$. As a consequence the vertices $w_k(\xb)$, $z_k(\xb)$, $v_k(\xb)$ define the same two train-tracks denoted by $T_x=\{T_x^1,T_x^2\}$. Let $T_y$ be the one/two train-track(s) associated to the vertex $y$. Since the construction of the path $\gamma_{x,y}$ is split according to the intersection properties of $T_x$ and $T_y$, we use the same decomposition here.

\underline{\em If $T_x$ and $T_y$ have $0$ train-track in common}. 
Then the paths $\gamma_{w_k,y}$, $\gamma_{z_k,y}$, $\gamma_{v_k,y}$ are minimal, and are constructed according to Case $1$ of Section \ref{subsubsec823}. Recall that in Case $1$, the construction of the angular sector $s_{x,y}$ from the minimal path is independent of the choice of minimal path from $\hat{\yb}$ (the initial vertex) to $\hat{\xb}$ (the ending vertex). Moreover, returning to the construction of the convex polygon $P$ from a minimal path $\gamma$, we have that if $\gamma_1$ and $\gamma_2$ are two minimal paths such that $\gamma_1\subset\gamma_2$ (meaning that edges of $\gamma_1$ are also edges of $\gamma_2$), then the boundary of the convex polygon $P_1$ is included in the boundary of the convex polygon $P_2$. As a consequence, the angular sector $s_2$ corresponding to $P_2$ is included in the angular sector $s_1$ corresponding to $P_1$.
\begin{itemize}
\item {\em If $T_x^1,\,T_x^2$ do not separate $\yb$ from $\xb$}. Then the minimal paths $\gamma_{w_k,y}$, $\gamma_{z_k,y}$, $\gamma_{v_k,y}$ can be chosen as in Figure \ref{proof1} below.

\begin{figure}[ht]
\begin{center}
\includegraphics[width=9.5cm]{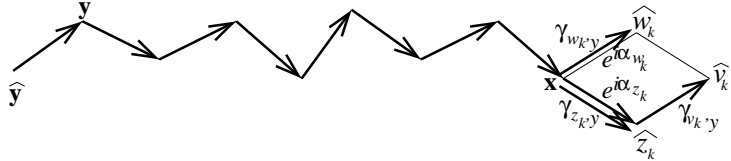}
\end{center}
\caption{\label{proof1} $T_x$ and $T_y$ have $0$ train-track in common. Construction of the paths $\gamma_{w_k,y}$, $\gamma_{z_k,y}$, $\gamma_{v_k,y}$, when $T_x^1$ and $T_x^2$  do not separate $\yb$ from $\xb$.}
\end{figure}

Observe that $\gamma_{z_k,y}\subset \gamma_{v_k,y}$, so that by the above remark $s_{v_k,y}\subset s_{z_k,y}$. Since the construction of the angular sector $s_{v_k,y}$ is independent of the choice of minimal from $\hat{\yb}$ to $\widehat{v_k}$, we can also choose $\gamma_{v_k,y}$ to be the concatenation of the minimal path from $\hat{\yb}$ to $\xb$, and  the vectors $e^{i\alpha_{w_k}}$, $e^{i\alpha_{z_k}}$. Then, $\gamma_{w_k,y}\subset\gamma_{v_k,y}$, and $s_{v_k,y}\subset s_{w_k,y}$. We deduce that: 
\begin{equation*}
s_{z_k,y}\cap s_{w_k,y}\cap s_{v_k,y}=s_{v_k,y}\neq\emptyset, 
\end{equation*}
since $s_{v_k,y}$ is an angular sector of size at least $\pi$.
\item {\em If only $T_x^1$ separates $\yb$ from $\xb$}. Then the minimal paths $\gamma_{w_k,y}$, $\gamma_{z_k,y}$, $\gamma_{v_k,y}$ can be chosen as in Figure \ref{proof1a} below.

\begin{figure}[ht]
\begin{center}
\includegraphics[width=9.8cm]{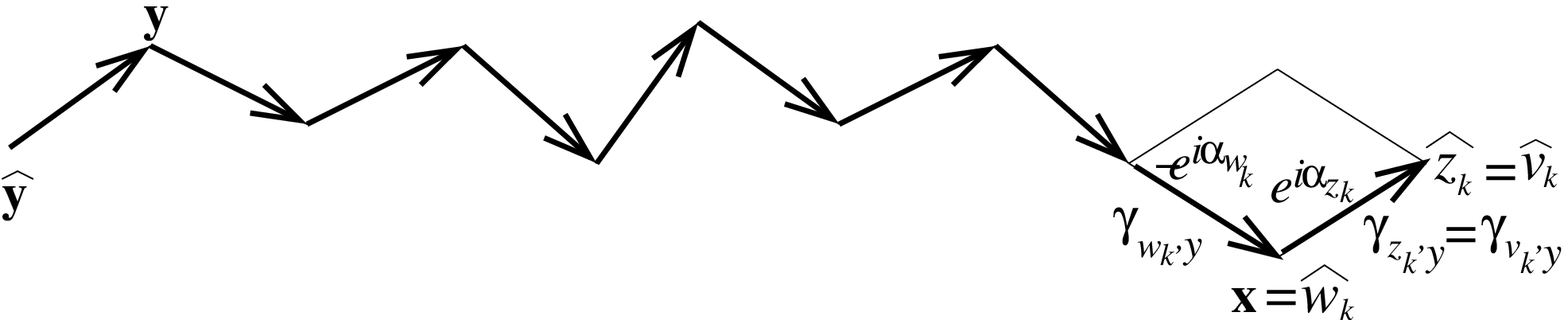}
\end{center}
\caption{\label{proof1a}  $T_x$ and $T_y$ have $0$ train-track in common. Construction of the paths $\gamma_{w_k,y}$, $\gamma_{z_k,y}$, $\gamma_{v_k,y}$, when only $T_x^1$ separates $\yb$ from $\xb$.}
\end{figure}
We have $\gamma_{w_k,y}\subset\gamma_{z_k,y}=\gamma_{v_k,y}$, so that $s_{w_k,y}\cap s_{z_k,y}\cap s_{v_k,y}=s_{v_k,y}\neq\emptyset$, since $s_{v_k,y}$ is an angular sector of size at least $\pi$.

\item{\em If $T_x^1$ and $T_x^2$ separate $\yb$ from $\xb$}. Then the minimal paths $\gamma_{w_k,y}$, $\gamma_{z_k,y}$, $\gamma_{v_k,y}$ can be chosen as in Figure \ref{proof1b} below.

\begin{figure}[ht]
\begin{center}
\includegraphics[width=10cm]{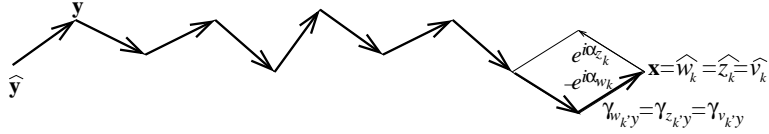}
\end{center}
\caption{\label{proof1b}  $T_x$ and $T_y$ have $0$ train-track in common. Construction of the paths $\gamma_{w_k,y}$, $\gamma_{z_k,y}$, $\gamma_{v_k,y}$, when $T_x^1$ and $T_x^2$ separate $\yb$ from $\xb$.}
\end{figure}
Then, the three paths are equal, so that $s_{w_k,y}\cap s_{z_k,y}\cap s_{v_k,y}=s_{w_k,y}\neq\emptyset$.
\end{itemize}

\underline{\em If $T_x$ and $T_y$ have $1$ train-track $T$ in common}. Let us assume that $T=T_x^2=T_y^2$. Then, the paths $\gamma_{z_k,y}$ and $\gamma_{v_k,y}$ are constructed according to Case $2$ of Section \ref{subsubsec823}, and $\gamma_{w_k,y}$ is constructed according to Case $1$. 

Suppose that $y\neq w_k,z_{k+1},z_k,w_{k-1}$, so that we are not in the exceptions to Case $2$. We only handle the case where $T_x^1$ and $T_x^2$ do not separate $\yb$ from $\xb$, see Figure \ref{proof2}, the other cases are handled in a similar way. Then, the paths $\gamma_{w_k,y}$, 
$\gamma_{z_k,y}$, $\gamma_{v_k,y}$ are constructed as in Figure \ref{proof2} below.

\begin{figure}[ht]
\begin{center}
\includegraphics[width=9cm]{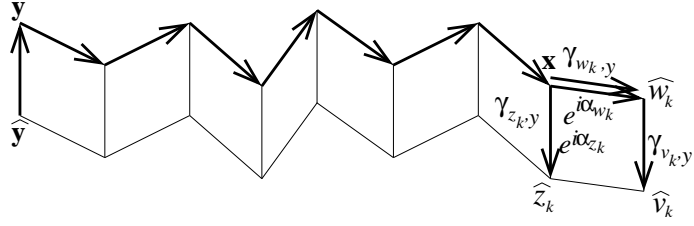}
\end{center}
\caption{\label{proof2} $T_x$ and $T_y$ have $1$ train-track in common. Construction of the paths $\gamma_{w_k,y}$, $\gamma_{z_k,y}$, $\gamma_{v_k,y}$, when $T_x^1$ and $T_x^2$ do not separate $\yb$ from $\xb$.}
\end{figure}

By construction of the convex polygon associated to these paths, we have: $s_{v_k,y}=s_{z_k,y}$, and $s_{v_k,y}\subset s_{w_k,y}$. We deduce that:
\begin{equation*}
s_{v_k,y}\cap s_{z_k,y}\cap s_{w_k,y}=s_{v_k,y}\neq\emptyset,
\end{equation*}
since $s_{v_k,y}$ is an angular sector of size $\pi-\eps$, for some $\eps>0$.

Suppose now that $y=w_k,z_{k+1},z_{k}$, or $w_{k-1}$. In these four cases, the angular sectors $s_{z_k,y}$, $s_{w_k,y}$, $s_{v_k,y}$ are drawn on Figure \ref{sector1}.

\begin{figure}[ht]
\begin{center}
\includegraphics[width=6.8cm]{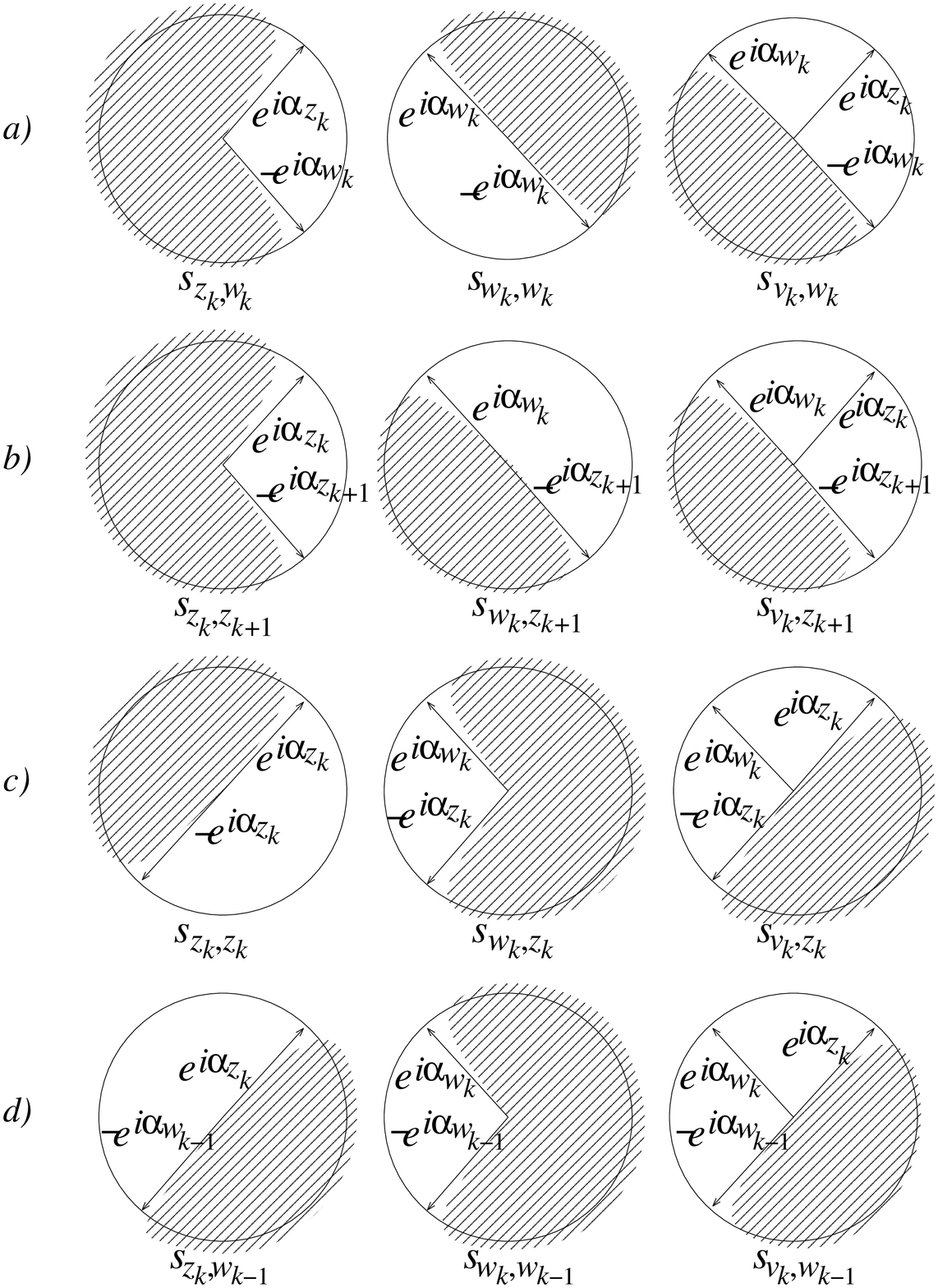}
\end{center}
\caption{\label{sector1} Angular sectors $s_{z_k,y}$, $s_{w_k,y}$, $s_{v_k,y}$ in the four exceptions to Case $2$.}
\end{figure}

From $b)$ and $d)$ we see that when $y=z_{k+1}$ or $y=w_{k-1}$,
the intersection $s_{z_k,y}\cap s_{w_k,y}\cap s_{v_k,y}\neq\emptyset$. In the remaining two cases we do explicit computations.
\begin{itemize}
\item {\em Computations for $y=w_k$}. In this case, see Figure \ref{sector1} $a)$, we have $s_{z_k,w_k}\cap~s_{v_k,w_k}~\neq~\emptyset$. Hence, by $4$ of Proposition \ref{prop:casered}, we know that:
\begin{align*}
(KI)_{w_k,w_k}=-(KI)_{z_k,w_k}&=(KI)_{v_k,w_k}=\\
&=\Bigl[\Bigl(-\oint_{\C_{w_k,w_k}}+\oint_{\C_{v_k,w_k}}\Bigr)f_{w_k}(\lambda)\Bigr]
f_{w_k}(-\lambda)\log\lambda \frac{\ud\lambda}{(2\pi)^2}.
\end{align*}
Using the definition of $f_{w_k}(\lambda)$, and denoting by $\C$ a generic simple closed curve oriented counterclockwise, containing all poles of the integrand, and avoiding a half-line starting from $0$, yields:
\begin{align}\label{eq:intww}
\frac{1}{(2\pi)^2}
\oint_{\C}f_{w_k}(\lambda)f_{w_k}(-\lambda)\log\lambda\ud\lambda&=-\frac{1}{(2\pi)^2}
 \oint_{\C}\frac{e^{i\alpha_{w_k}}}{(\lambda-e^{i\alpha_{w_k}})(\lambda+e^{i\alpha_{w_k}})}\log\lambda\ud\lambda\nonumber\\
&=-\frac{i}{4\pi}\left[\log_\C(e^{i\alpha_{w_k}})-\log_\C(-e^{i\alpha_{w_k}})\right].
\end{align}
As a consequence, see Figure \ref{sector1} $a)$
\begin{align*}
&(KI)_{w_k,w_k}=-(KI)_{z_k,w_k}=(KI)_{v_k,w_k}=\\
&=-\frac{i}{4\pi}\left[
-\log_{\C_{w_k,w_k}}(e^{i\alpha_{w_k}})+\log_{\C_{w_k,w_k}}(-e^{i\alpha_{w_k}})+\log_{\C_{v_k,w_k}}(e^{i\alpha_{w_k}})-\log_{\C_{v_k,w_k}}(-e^{i\alpha_{w_k}})
\right]\\
&=-\frac{i}{4\pi}[-i\alpha_{w_k}+i(\alpha_{w_k}+\pi)+i\alpha_{w_k}-i(\alpha_{w_k}-\pi)]=\frac{1}{2}.
\end{align*}

\item {\em Computations for $y=z_k$}. In this case, see Figure \ref{sector1} $c)$, we have $s_{w_k,z_k}\cap s_{v_k,z_k}\neq~\emptyset$. Hence, by $3$ of Proposition \ref{prop:casered}, we know that:
\begin{align*}
(KI)_{w_k,z_k}=-(KI)_{z_k,z_k}&=-(KI)_{v_k,z_k}=\\
&=\Bigl[\Bigl(-\oint_{\C_{z_k,z_k}}+\oint_{\C_{v_k,z_k}}\Bigr)f_{z_k}(\lambda)\Bigr]
f_{z_k}(-\lambda)\log\lambda \frac{\ud\lambda}{(2\pi)^2}.\\
\end{align*}
Using the definition of $f_{z_k}(\lambda)$, yields
$f_{z_k}(\lambda)f_{z_k}(-\lambda)=-\frac{e^{i\alpha_{z_k}}}{(\lambda-e^{i\alpha_{z_k}})(\lambda+e^{i\alpha_{z_k}})}$,
so that using Figure \ref{sector1} $c)$, we obtain:
\begin{align*}
&(KI)_{w_k,z_k}=-(KI)_{z_k,z_k}=-(KI)_{v_k,z_k}=\\
&=-\frac{i}{4\pi}\left[
-\log_{\C_{z_k,z_k}}(e^{i\alpha_{z_k}})+\log_{\C_{z_k,z_k}}(-e^{i\alpha_{z_k}})+\log_{\C_{v_k,z_k}}(e^{i\alpha_{z_k}})-\log_{\C_{v_k,z_k}}(-e^{i\alpha_{z_k}})
\right]\\
&=-\frac{i}{4\pi}[-i\alpha_{z_k}+i(\alpha_{z_k}-\pi)+i\alpha_{z_k}-i(\alpha_{z_k}+\pi)]=-\frac{1}{2}.
\end{align*}
\end{itemize}

\underline{\em If $T_x$ and $T_y$ have $2$ train-tracks in common}. This can only occur when 
$y=v_k(\xb)$ or $v_\l(\xb')$, such that $v_k(\xb)\sim v_\l(\xb')$. In these two cases, the angular sectors $s_{z_k,y}$, $s_{w_k,y}$, $s_{v_k,y}$ are drawn on Figure \ref{fig2}.\vspace{0.1cm}\\

\begin{figure}[h]
\begin{center}
\includegraphics[width=7.5cm]{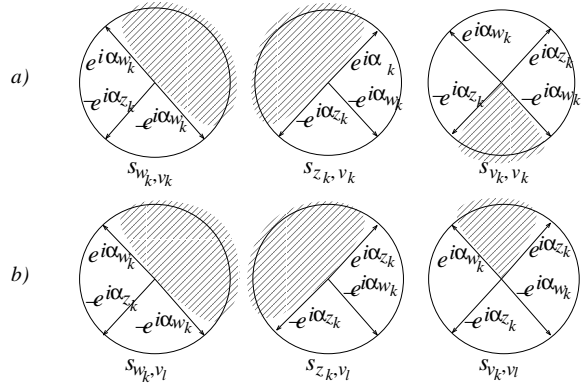}
\end{center}
\caption{Angular sectors $s_{z_k,y}$, $s_{w_k,y}$, $s_{v_k,y}$ in Case $3$.\label{fig2}} 
\end{figure}

From $b)$ we see that $(KI)_{w_k,v_\l}=(KI)_{z_k,v_\l}=(KI)_{v_k,v_\l}=0$. For $y=v_k(\xb)$, we do explicit computations.
\begin{itemize}
\item {\em Computations for $y=v_k$}. We use $1$ and $2$ of Proposition \ref{prop:casered}. Let us first compute:
\begin{equation*}
\Bigl(-\oint_{\C_{z_k,v_k}}+\oint_{\C_{v_k,v_k}}\Bigr)f_{z_k}(\lambda)f_{v_k}(-\lambda)\log\lambda \frac{\ud\lambda}{(2\pi)^2}.
\end{equation*}
Using the definition of $f_{z_k}(\lambda)$, $f_{v_k}(\lambda)$, and denoting by $\C$ a generic contour, we have:
\begin{align}\label{eq:calcul}
\frac{1}{(2\pi)^2}&\oint_{\C} f_{z_k}(\lambda) f_{v_k}(-\lambda)
 \log\lambda\ud\lambda\nonumber\\
&=-\frac{1}{4\pi^2}\oint_{\C}\left(\frac{e^{i\alpha_{z_k}}}{(\lambda-e^{i\alpha_{z_k}})(\lambda+e^{i\alpha_{z_k}})}
-\frac{e^{i\frac{\alpha_{z_k}+\alpha_{w_k}}{2}}}{(\lambda-e^{i\alpha_{z_k}})(\lambda+e^{i\alpha_{w_k}})}
\right)\log\lambda\,\ud\lambda\nonumber\\
&=-\frac{i}{4\pi}\left[
\log_\C(e^{i\alpha_{z_k}})-\log_\C(-e^{i\alpha_{z_k}})-\frac{\log_\C(e^{i\alpha_{z_k}})-\log_\C(-e^{i\alpha_{w_k}})}{\cos\left(\frac{\alpha_{w_k}-\alpha_{z_k}}{2}\right)}
\right].
\end{align}
As a consequence:
\begin{align*}
\left(-\oint_{\C_{z_k,v_k}}+\oint_{\C_{v_k,v_k}}\right)&f_{z_k}(\lambda)f_{v_k}(-\lambda)\log\lambda \frac{\ud\lambda}{(2\pi)^2}=\\
&=\frac{1}{4\pi}\left[-\pi+\frac{\pi-(\alpha_{w_k}-\alpha_{z_k})}{\cos\left(\frac{\alpha_{w_k}-\alpha_{z_k}}{2}\right)}-\pi-\frac{\pi-(\alpha_{w_k}-\alpha_{z_k})}{\cos\left(\frac{\alpha_{w_k}-\alpha_{z_k}}{2}\right)}\right]\\
&=-\frac{1}{2}.
\end{align*}

Let us now compute,
\begin{equation*}
\left(-\oint_{\\C_{w_k,v_k}}+\oint_{\\C_{v_k,v_k}}\right)f_{w_k}(\lambda)
f_{v_k}(-\lambda)\log\lambda \frac{\ud\lambda}{(2\pi)^2}.\\
\end{equation*}
Exchanging $z_k$ with $w_k$ in \eqref{eq:calcul} yields:
\begin{align}\label{eq:fwv}
\frac{1}{(2\pi)^2}&\oint_{\C} f_{w_k}(\lambda) f_{v_k}(-\lambda)
 \log\lambda\ud\lambda\nonumber\\
&=-\frac{i}{4\pi}\left[
\log_\C(e^{i\alpha_{w_k}})-\log_\C(-e^{i\alpha_{w_k}})-\frac{\log_\C(e^{i\alpha_{w_k}})-\log_\C(-e^{i\alpha_{z_k}})}{\cos\left(\frac{\alpha_{w_k}-\alpha_{z_k}}{2}\right)}
\right].\nonumber\\
\end{align}
As a consequence:
\begin{align*}
\left(-\oint_{\C_{w_k,v_k}}+\oint_{\C_{v_k,v_k}}\right)&f_{w_k}(\lambda)
f_{v_k}(-\lambda)\log\lambda \frac{\ud\lambda}{(2\pi)^2}\\
&=\frac{1}{4\pi}
\left[\pi-\frac{\pi-(\alpha_{w_k}-\alpha_{z_k})}{\cos\left(\frac{\alpha_{w_k}-\alpha_{z_k}}{2}\right)}+\pi+\frac{\pi-(\alpha_{w_k}-\alpha_{z_k})}{\cos\left(\frac{\alpha_{w_k}-\alpha_{z_k}}{2}\right)}
\right]\\
&=\frac{1}{2}.
\end{align*}
Thus, by $1$ and $2$ of Proposition \ref{prop:casered}, we deduce:
\begin{equation*}
(KI)_{w_k,v_k}=0,\quad (KI)_{z_k,v_k}=0,\quad (KI)_{v_k,v_k}=1.
\end{equation*}
\end{itemize}
\end{proof}

By Proposition \ref{prop:KI}, proving that $KK^{-1}=\Id$, amounts to proving the following lemma for $KC$.

\begin{lem}\label{lem:C}
For all vertices $x$ and $y$ of $\GD$, we have:
\begin{equation*}
(KC)_{x,y}=
\begin{cases}
\frac{1}{2}&\text{ if
$(x,y)=(w_k,w_k),\,(z_k,z_k),\,(w_k,z_k),\,(z_k,w_k)$}\\
-\frac{1}{2}&\text{ if $(x,y)=(v_k,w_k),\,(v_k,z_k)$}\\
0&\text{ else}.
\end{cases}
\end{equation*}
\end{lem}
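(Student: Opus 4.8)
The plan is to reduce the identity $(KC)_{x,y}=\sum_{i=1}^{3}K_{x,x_i}C_{x_i,y}$ to a finite computation inside a single decoration. The first observation is that, by the very definition of the constant term, $C_{a,b}=0$ unless $a$ and $b$ are both of type `$w$' or `$z$' and lie in the same decoration. Since every neighbour in $\GD$ of a vertex of type `$w$' or `$z$' again lies in its decoration, a summand $K_{x,x_i}C_{x_i,y}$ can be non-zero only if $x_i$ is of type `$w$' or `$z$' and $x$ itself lies in the decoration of $\yb$. Hence one may assume $\xb=\yb$; writing $d=d(\xb)$ and dropping the argument $\xb$, we work on the inner cycle $C_1=(z_1,w_1,\dots,z_d,w_d,z_1)$ of that decoration.

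Using the Kasteleyn signs already computed in the proof of Proposition~\ref{prop:kernelK}, namely $K_{w_k,z_k}=-1$, $K_{w_k,z_{k+1}}=\eps_{w_k,z_{k+1}}$, $K_{w_k,v_k}=1$, together with $K_{z_k,w_{k-1}}=\eps_{z_k,w_{k-1}}$, $K_{z_k,w_k}=1$, $K_{z_k,v_k}=-1$ and $K_{v_k,z_k}=1$, $K_{v_k,w_k}=-1$, and the fact that the third neighbour $v_\l(\xb')$ of $v_k$ is of type `$v$' hence contributes $0$, I would rewrite
\begin{align*}
(KC)_{w_k,y}&=-C_{z_k,y}+\eps_{w_k,z_{k+1}}\,C_{z_{k+1},y},\\
(KC)_{z_k,y}&=\eps_{z_k,w_{k-1}}\,C_{w_{k-1},y}+C_{w_k,y},\\
(KC)_{v_k,y}&=C_{z_k,y}-C_{w_k,y}.
\end{align*}
In particular $(KC)_{x,y}=0$ whenever $y$ is of type `$v$', and more generally whenever $y$ does not lie on $C_1$; so it remains to treat $y$ of type `$w$' or `$z$' in the decoration of $\xb$.

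The technical heart is to encode the combinatorial exponent $n(a,b)$ by an edge-sign on $C_1$. Orient $C_1$ clockwise, and set $\sigma(e)=-1$ if the Kasteleyn orientation of $e$ agrees with this clockwise orientation, and $\sigma(e)=+1$ otherwise. Unwinding the conventions of Section~\ref{subsec41} together with the proof of Lemma~\ref{lem:angles4pi}, one gets $\sigma(z_kw_k)=+1$ for the intra-triangle edges and $\sigma(w_kz_{k+1})=\eps_{w_k,z_{k+1}}$ for the inter-triangle ones. Then, for $a\neq b$ of type `$w$' or `$z$' in the decoration, $(-1)^{n(a,b)}$ equals the product of $\sigma(e)$ over the edges $e$ of the clockwise arc of $C_1$ from $a$ to $b$; in particular these arc-products are multiplicative under concatenation. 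Finally, as recalled in the proof of Lemma~\ref{lem:angles4pi}, the number of edges of $C_1$ with $\sigma=-1$ is odd, that is $\prod_{e\in C_1}\sigma(e)=\prod_{k=1}^{d}\eps_{w_k,z_{k+1}}=-1$.

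With these facts, together with $C_{a,a}=\tfrac14$ for $a$ of type `$w$' and $C_{a,a}=-\tfrac14$ for $a$ of type `$z$', each entry of $KC$ becomes a short computation. For a \emph{generic} $y$, i.e. of type `$w$'/`$z$' in the decoration with $y\notin\{w_{k-1},z_k,w_k,z_{k+1}\}$, concatenation at $z_k$ gives $C_{z_{k+1},y}=\eps_{w_k,z_{k+1}}C_{z_k,y}$, whence $(KC)_{w_k,y}=(-1+\eps_{w_k,z_{k+1}}^{2})C_{z_k,y}=0$; concatenation at $w_{k-1}$ gives $C_{w_k,y}=-\eps_{z_k,w_{k-1}}C_{w_{k-1},y}$, whence $(KC)_{z_k,y}=0$; and since the clockwise arc from $w_k$ begins with the intra-triangle edge $w_k\to z_k$ (of $\sigma$-value $+1$) one has $C_{w_k,y}=C_{z_k,y}$, whence $(KC)_{v_k,y}=0$. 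The finitely many coincident cases $y\in\{w_{k-1},z_k,w_k,z_{k+1}\}$ are checked one by one, the relevant arc now being the ``long'' one, evaluated through $\prod_{e\in C_1}\sigma(e)=-1$: for instance $C_{z_k,w_k}=-\tfrac14$ and $C_{z_{k+1},w_k}=\tfrac14\,\eps_{w_k,z_{k+1}}$, so $(KC)_{w_k,w_k}=\tfrac14+\tfrac14=\tfrac12$, and likewise $(KC)_{z_k,z_k}=(KC)_{w_k,z_k}=(KC)_{z_k,w_k}=\tfrac12$, $(KC)_{v_k,w_k}=(KC)_{v_k,z_k}=-\tfrac12$, all remaining cases being $0$. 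This is exactly the table of the lemma. The one real difficulty is the orientation bookkeeping: fixing what ``clockwise along $C_1$'' means, checking the identities $\sigma(w_kz_{k+1})=\eps_{w_k,z_{k+1}}$ and $\prod_k\eps_{w_k,z_{k+1}}=-1$, and keeping the generic values of $y$ (where the spurious factor $\eps^{2}=1$ cancels under concatenation) cleanly separated from the handful of coincidences $y\in\{w_{k-1},z_k,w_k,z_{k+1}\}$, which genuinely require that global cycle identity.
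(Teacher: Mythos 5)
Your proof is correct and takes essentially the same route as the paper's: you derive the same three expressions for $(KC)_{w_k,y}$, $(KC)_{z_k,y}$, $(KC)_{v_k,y}$, and you rely on the same multiplicativity of $(-1)^{n(\cdot,\cdot)}$ along arcs of the inner cycle together with the Kasteleyn parity around $C_1$ (your edge-sign $\sigma$ is a notational repackaging of exactly this bookkeeping). The only organizational difference is that the paper streamlines the case analysis by first proving the single identity $\eps_{w_k,z_{k+1}}C_{z_{k+1},y}=C_{w_k,y}$ for \emph{all} $y$ (including $y=z_{k+1}$), which immediately reduces every entry to $\pm(C_{w_k,y}-C_{z_k,y})$ and leaves only $y=w_k$ and $y=z_k$ to evaluate, whereas you separate a ``generic $y$'' case from a slightly larger list of coincidences; the ingredients are the same.
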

\begin{proof}
By definition of $C$, we have $C_{x,y}=0$, as soon as $x$ or $y$ is
of type $`v'$. As a consequence, using Equation \eqref{eq:KI} and our choice of Kasteleyn orientation, we deduce:
\begin{align}\label{eq:C1}
(KC)_{w_k,y}&=-C_{z_k,y}+\eps_{w_k,z_{k+1}}C_{z_{k+1},y}\nonumber\\
(KC)_{z_k,y}&=\eps_{z_k,w_{k-1}}C_{w_{k-1},y}+C_{w_k,y}\nonumber\\
(KC)_{v_k,y}&=C_{z_k,y}-C_{w_k,y}.
\end{align}
Let us first prove that for every vertex $y$ of $\GD$, we have $\eps_{w_k,z_{k+1}}C_{z_{k+1},y}=C_{w_k,y}$, or equivalently:
\begin{equation}\label{eq:C}
C_{z_{k+1},y}=\eps_{w_k,z_{k+1}}C_{w_k,y}.
\end{equation}
When $y$ belongs to a different decoration than $x$, then both sides of \eqref{eq:C} are equal to $0$. Let us thus suppose that $y$ is in the same decoration as $x$. If $y\neq z_{k+1}$, then by definition:
\begin{equation*}
C_{z_{k+1},y}=\frac{(-1)^{n(z_{k+1},y)}}{4}=\frac{1}{4}(-1)^{n(z_{k+1},w_k)}(-1)^{n(w_k,y)}
=\eps_{w_k,z_{k+1}}C_{w_k,y},
\end{equation*}
so that \eqref{eq:C} holds. If $y=z_{k+1}$, then by definition, $C_{z_{k+1},z_{k+1}}=-\frac{1}{4}$.
Moreover, 
\begin{equation*}
\eps_{w_k,z_{k+1}}C_{w_k,z_{k+1}}=\eps_{w_k,z_{k+1}}\frac{(-1)^{n(w_k,z_{k+1})}}{4}
=\eps_{w_k,z_{k+1}}\frac{\eps_{z_{k+1},w_k}}{4}=-\frac{1}{4},
\end{equation*}
so that \eqref{eq:C} also holds in this case.
As a consequence, Equation \eqref{eq:C1} becomes:
\begin{align*}
&(KC)_{w_k,y}=(KC)_{z_k,y}=-(KC)_{v_k,y}=-C_{z_k,y}+C_{w_k,y}.
\end{align*}
Let us now end the proof of Lemma \ref{lem:C}. If $y$ does not belong to the same decoration as $x$, or if $y$ belongs to the same decoration as $x$ and is of type `$v$', then $C_{w_k,y}=C_{z_k,y}=0$, so that:
\begin{equation*}
(KC)_{w_k,y}=(KC)_{z_k,y}=(KC)_{v_k,y}=0.
\end{equation*}
If $y$ belongs to the same decoration as $x$, but not to the same triangle of the decoration, then
\begin{equation*} 
C_{w_k,y}=
\frac{(-1)^{n(w_k,y)}}{4}=\frac{1}{4}(-1)^{n(w_k,z_k)}(-1)^{n(z_k,y)}=\eps_{z_k,w_k}C_{z_k,y}
=C_{z_k,y},
\end{equation*}
since by our choice of Kasteleyn orientation, $\eps_{z_k,w_k}=1$. Thus,
\begin{equation*}
(KC)_{w_k,y}=(KC)_{z_k,y}=(KC)_{v_k,y}=0.
\end{equation*}
If $y=w_k$, then by definition $C_{w_k,w_k}=\frac{1}{4}$, and $C_{z_k,w_k}=-C_{w_k,z_k}=-\frac{1}{4}$, thus:
\begin{equation*}
(KC)_{w_k,w_k}=(KC)_{z_k,w_k}=-(KC)_{v_k,w_k}=\frac{1}{2}.
\end{equation*}
Finally, if $y=z_k$, then by definition $C_{w_k,z_k}=\frac{1}{4}$, and $C_{z_k,z_k}=-\frac{1}{4}$, so that:
\begin{equation*}
(KC)_{w_k,z_k}=(KC)_{z_k,z_k}=-(KC)_{v_k,w_k}=\frac{1}{2}.
\end{equation*}

\end{proof}

\appendix
\section{Probability of occurrence of single edges}\label{app:calculs}

Let us compute the probability of occurrence of single edges in dimer configurations of the Fisher graph $\GD$ chosen with respect to the Gibbs measure $\P$ of Theorem \ref{thm:measure}. Every edge of $\GD$ is of the form $ w_k z_k$, $ w_k z_{k+1}$, $w_k v_k $ or $ v_k v_\l $ as represented on Figure~\ref{fig:proba_edges} below. The vertices $z_k$, $w_k$, $z_{k+1}$  and $v_k$ belong to the decoration of $\xb$, and $v_\l$ belongs to that of $\yb$. The edge of $G$ joining $\xb$ and $\yb$ is the diagonal of a rhombus with half-angle $\theta$. In order to simplify notations, let us write 
\begin{equation*}
 \alpha_{z_k}=\alpha,\quad\quad\alpha_{w_k}=\beta=\alpha+2\theta.
\end{equation*}

\begin{figure}[ht]
  \begin{center}
    \includegraphics[width=4cm]{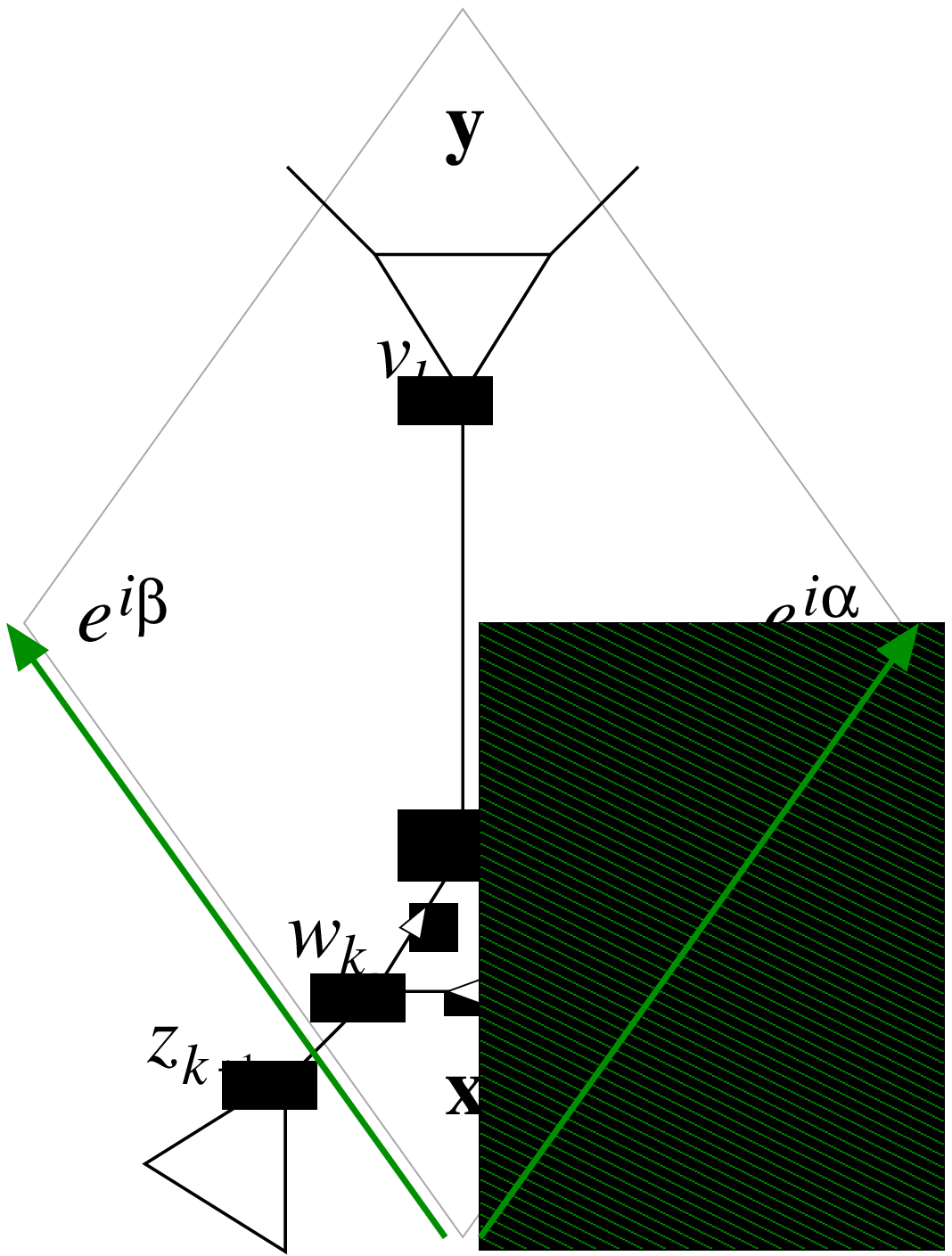}
    \caption{A piece of the Fisher graph $\GD$ near a rhombus with half-angle $\theta$, adjacent to the decorations of $\xb$ and $\yb$. The sides of the rhombus are represented by the unit vectors $e^{i\alpha}$ and $e^{i\beta}$, with $\beta-\alpha=2\theta$.}
    \label{fig:proba_edges}
  \end{center}
\end{figure}

By Theorem~\ref{thm:measure}, we know that the probability of an edge $e=uv$ of $\GD$ is given by
\begin{equation*}
\P(e)= K_{u,v}\Pf \begin{bmatrix} K^{-1}_{u,u} & K^{-1}_{v,u} \\ K^{-1}_{u,v} & K^{-1}_{v,v} \end{bmatrix} = K_{u,v} K^{-1}_{v,u},
\end{equation*}
where the coefficient $K^{-1}_{u,v}$ of the inverse Kasteleyn matrix is given by Theorem \ref{inverse}.

\subsubsection*{Probability of the edge $w_k z_k$}
The vertices $w_k$ and $z_k$ belong to the same decoration $\xb$, so that there is no contribution from the exponential function, since $\expo_{\xb,\xb}=1$. By our choice of Kasteleyn orientation, we have $K_{z_k,w_k}=1$. Using Formula \eqref{invKast} and the definition of the function $f$, yields:
\begin{align*}
\P(w_k z_k)=K_{z_k,w_k}K^{-1}_{w_k,z_k}&= \frac{1}{4\pi^2}\oint_{\mathcal{C}_1} f_{w_k}(\lambda) f_{z_k}(-\lambda) \log(\lambda) \ud\lambda + C_{w_k,z_k}\\
  &= \frac{1}{4\pi^2} \oint_{\mathcal{C}_1} -\frac{e^{\frac{i\beta}{2}}}{e^{i\beta}-\lambda}\frac{e^{\frac{i\alpha}{2}}}{e^{i\alpha}+\lambda}\log(\lambda)\ud \lambda + \frac{1}{4},
\end{align*}
where $\C_1=\C_{w_k,z_k}$ is the contour defined according to Case $1$ of Section \ref{subsubsec823}, represented on Figure~\ref{fig:proba_contour12}. The integral is evaluated by Cauchy's theorem:
\begin{align*}
\frac{1}{4\pi^2}\oint_{\mathcal{C}_1} \frac{e^{i\frac{\alpha+\beta}{2}}\log(\lambda)}{(\lambda-e^{i\beta})(\lambda+e^{i\alpha})} \ud \lambda &= \frac{i}{2\pi} e^{i\frac{\alpha+\beta}{2}}\left( \frac{\log_{\C_1}(e^{i\beta})-\log_{\C_1}(-e^{i\alpha})}{e^{i\alpha}+e^{i\beta}} \right)\\
&=\frac{i}{4\pi\cos(\frac{\beta-\alpha}{2})}\left( i\beta -i(\alpha+\pi)\right) \\ &=\frac{\pi-2\theta}{4\pi\cos\theta}.
\end{align*}
Therefore, $\displaystyle\P(w_k z_k)=\frac{1}{4}+\frac{\pi-2\theta}{4\pi\cos\theta}$.

\begin{figure}[ht]
\begin{center}
\includegraphics[height=4cm]{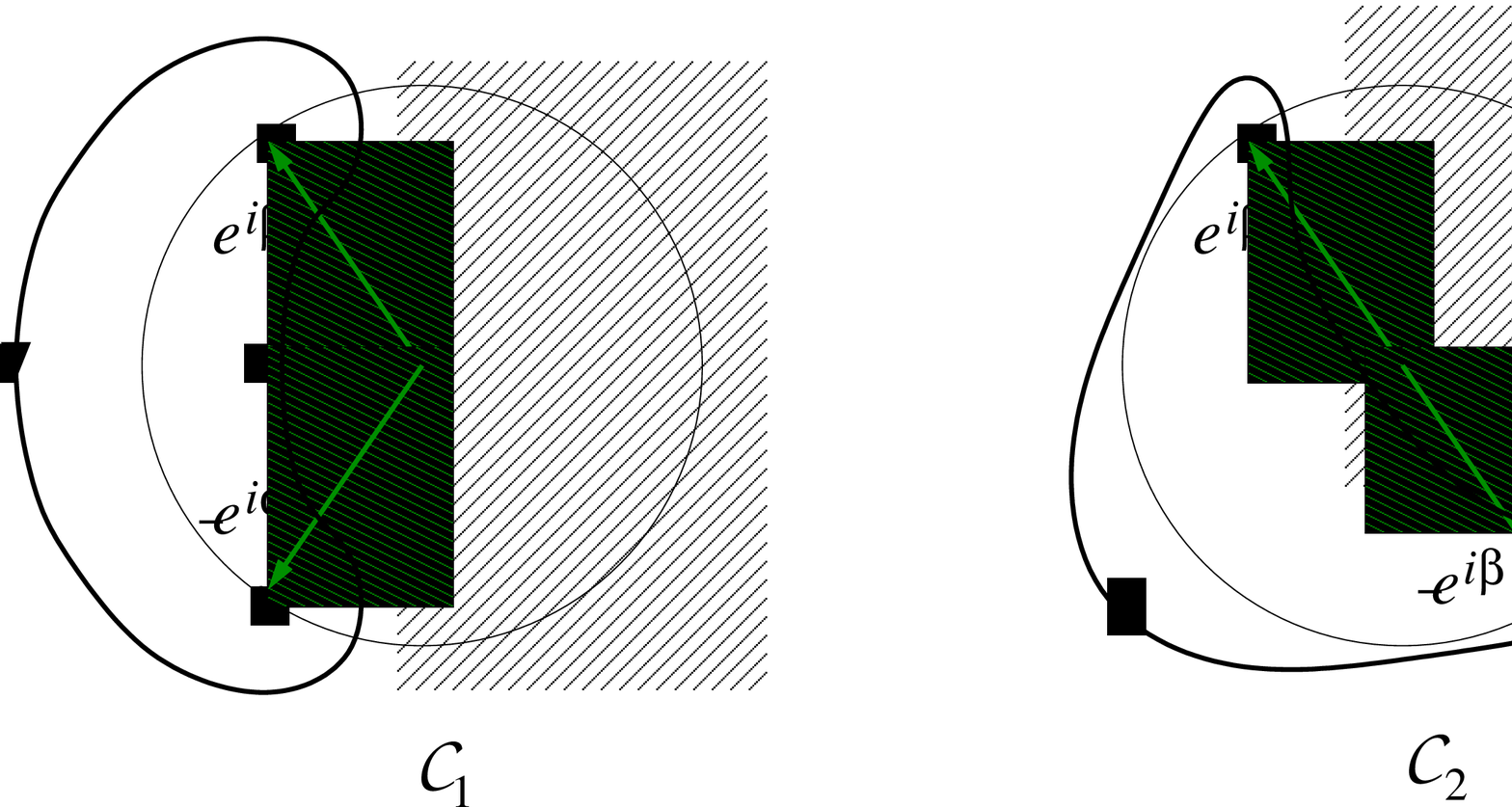}
\caption{\label{fig:proba_contour12} Left: the contour $\C_1=\C_{w_k,z_k}$ involved in the integral term of $K^{-1}_{w_k,z_{k}}$. Right: the contour $\C_2=\C_{z_{k+1},w_k}$ involved in the integral term of $K^{-1}_{z_{k+1},w_k}$. In both cases, the arrows represent the poles of the integrand, and the shaded zone is the angular sector avoided by the contour.}
\end{center}
\end{figure}

\subsubsection*{Probability of the edge $w_k z_{k+1}$}
Since the vertices $w_k$ and $z_{k+1}$ belong to the same decoration, there is no contribution from the exponential function. Moreover, 
by Equation \eqref{eq:kw01}, we know that $K_{w_k,z_{k+1}}f_{z_{k+1}}(\lambda)=-f_{w_k}(\lambda)$, and by Equation \eqref{eq:C}, we have $K_{w_k,z_{k+1}}C_{z_{k+1},w_k}=C_{w_k,w_k}=~\frac{1}{4}$. Therefore,
\begin{align*}
\P(w_k z_{k+1})&=K_{w_k,z_{k+1}}K^{-1}_{z_{k+1},w_k} \\
&=\frac{1}{4\pi^2}\oint_{\C_2} K_{w_k,z_{k+1}}f_{z_{k+1}}(\lambda) f_{w_{k}}(-\lambda) \log(\lambda)\ud \lambda + K_{w_k,z_{k+1}}C_{z_{k+1},w_k}\\
&=-\frac{1}{4\pi^2}\oint_{\C_2}f_{w_{k}}(\lambda)f_{w_{k}}(-\lambda)\log(\lambda)\ud \lambda+\frac{1}{4}.
\end{align*}
where $\C_2=\C_{z_{k+1},w_k}$ is the contour defined according to Case $2$ of Section \ref{subsubsec823}, represented on Figure \ref{fig:proba_contour12}. Using Equation \eqref{eq:intww} where we computed this integral for a generic contour $\C$, we obtain:
\begin{align*}
 -\frac{1}{4\pi^2}\oint_{\C_2}f_{w_{k}}(\lambda)f_{w_{k}}(-\lambda)\log\lambda\ud \lambda&=
\frac{i}{4\pi}[\log_{\C_2}(e^{i\beta})-\log_{\C_2}(-e^{i\beta})]=
\frac{i}{4\pi}(i\beta-i(\beta+\pi))=\frac{1}{4}.
\end{align*}
We conclude that  $\displaystyle\P(w_k z_{k+1})=\frac{1}{2}$.

\subsubsection*{Probability of the edges $w_k v_k$ an $z_k v_k$}
Again, there is no contribution from the exponential function. By our choice of Kasteleyn orientation, we have $K_{v_k,w_k}=-1$, and by definition we have $C_{v_k,w_k}=0$. Using Formula \eqref{invKast}, this yields:
\begin{align*}
 \P(w_k v_k)=K_{v_k,w_k}K^{-1}_{w_k,v_k}=
-\frac{1}{4\pi^2}\oint_{\C_3}f_{w_k}(\lambda)f_{v_k}(-\lambda)\log(\lambda) \ud\lambda,
\end{align*}
where $\C_3=\C_{w_k,v_k}$ is the contour defined according to Case $2$ of Section \ref{subsubsec823}, represented on Figure \ref{fig:proba_contour34}.
Using Equation \eqref{eq:fwv} where we computed this integral for a generic contour $\C$, we obtain:
\begin{align*}
\P(w_k v_k)&=
\frac{i}{4\pi}\left(
\log_{\C_3}(e^{i\beta})-\log_{\C_3}(-e^{i\beta})-
\frac{\log_{\C_3}(e^{i\beta})-\log_{\C_3}(-e^{i\alpha})}{\cos\left(\frac{\beta-\alpha}{2}\right)}\right)\\
&=\frac{i}{4\pi}\left(i\beta-i(\beta+\pi)-\frac{i\beta-i(\alpha+\pi)}{\cos\theta}\right)\\
&=\frac{1}{4}-\frac{\pi-2\theta}{4\pi\cos\theta}.
\end{align*}

By symmetry, this is also the probability of occurrence of the edge $z_k v_k$.

\begin{figure}[ht]
  \begin{center}
    \includegraphics[height=4cm]{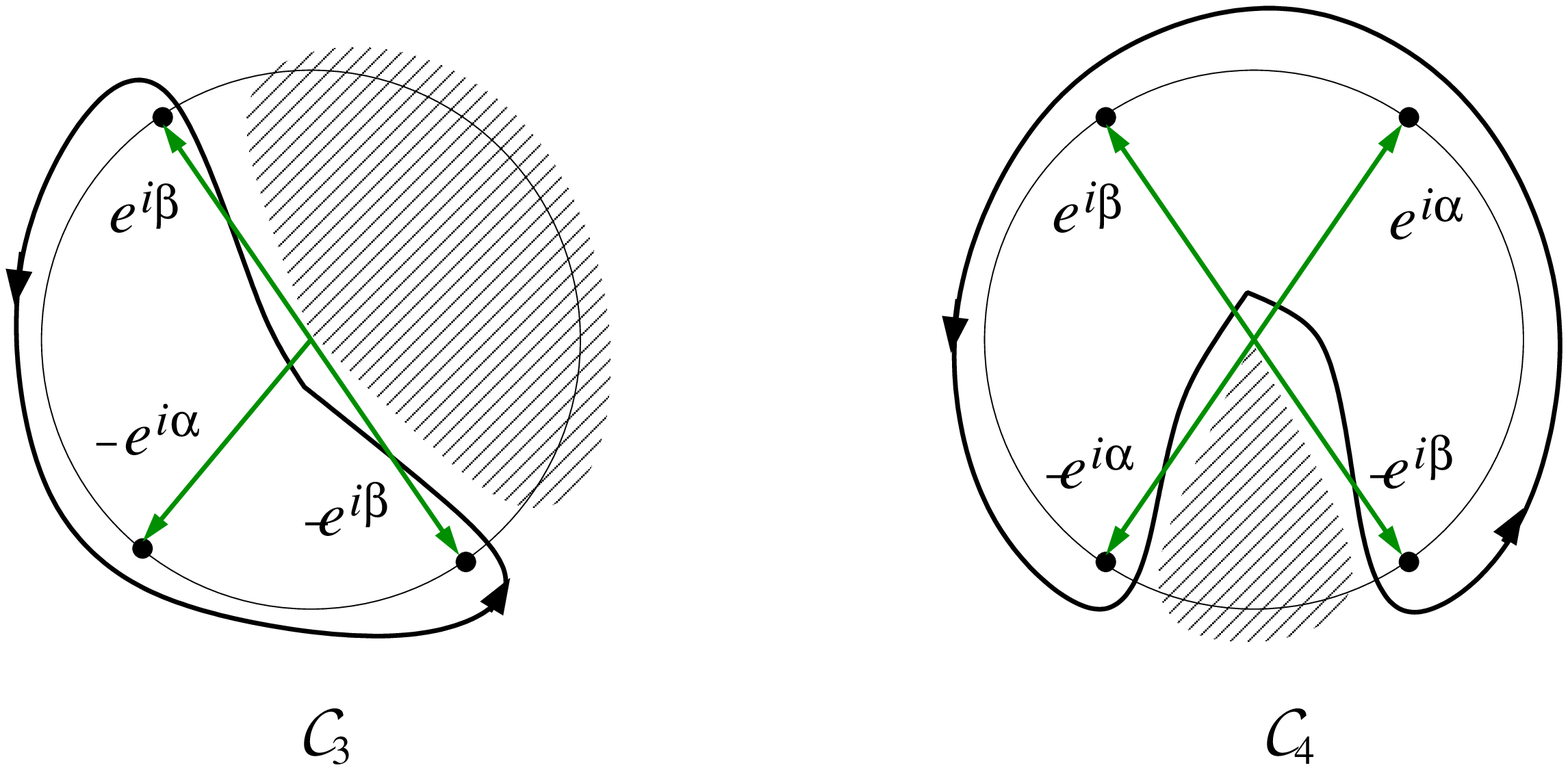}
    \caption{\label{fig:proba_contour34} 
Left: the contour $\C_3=\C_{w_k,v_k}$ involved in the integral term of $K^{-1}_{w_k,v_{k}}$. Right: the contour $\C_4=\C_{v_\l,v_k}$ involved in the integral term of $K^{-1}_{v_\l,v_k}$. 
}
\end{center}
\end{figure}

\subsubsection*{Probability of the edge $v_k v_\l$}
By definition, we have $C_{v_\l,v_k}=0$, and by Equation \eqref{eq:Kv0}, we know that:
\begin{equation*}
K_{v_k,v_\l}f_{v_\l}(\lambda)\expo_{\yb,\xb}(\lambda)=f_{w_k}(\lambda)-f_{z_k}(\lambda). 
\end{equation*}
Using Formula \eqref{invKast}, this yields:
\begin{align*}
\P(v_k v_\l)&=K_{v_k,v_\l}K^{-1}_{v_\l,v_k}\\
&= \frac{1}{4\pi^2}\oint_{\C_4} K_{v_k,v_\l} f_{v_\l}(\lambda)f_{v_k}(-\lambda)\expo_{\yb,\xb}(\lambda)\log\lambda\,\ud\lambda\\ &=\frac{1}{4\pi^2}\oint_{\C_4}\left[f_{w_k}(\lambda)-f_{z_k}(\lambda)\right]f_{v_k}(-\lambda)\log(\lambda) \ud\lambda\\
&=\frac{1}{4\pi^2}\oint_{\C_4}f_{w_k}(\lambda)f_{v_k}(-\lambda)\log(\lambda) \ud\lambda-
\frac{1}{4\pi^2}\oint_{\C_4}f_{z_k}(\lambda)f_{v_k}(-\lambda)\log(\lambda) \ud\lambda.
\end{align*}
where $\C_4=\C_{v_\l,v_k}$ is the contour defined according to Case $3$ of Section \ref{subsubsec823}, represented on Figure \ref{fig:proba_contour34}.
Using Equation \eqref{eq:fwv}, the first term is equal to:
\begin{align*}
 \frac{1}{4\pi^2}\oint_{\C_4}f_{w_k}(\lambda)&f_{v_k}(-\lambda)\log(\lambda) \ud\lambda=\\
&=-\frac{i}{4\pi}\left(
\log_{\C_4}(e^{i\beta})-\log_{\C_4}(-e^{i\beta})-
\frac{\log_{\C_4}(e^{i\beta})-\log_{\C_4}(-e^{i\alpha})}{\cos\left(\frac{\beta-\alpha}{2}\right)}\right)\\
&=-\frac{i}{4\pi}\left(i\beta-i(\beta-\pi)-\frac{i\beta-i(\alpha+\pi)}{\cos\theta}\right)\\
&=\frac{1}{4}+\frac{\pi-2\theta}{4\pi\cos\theta}.
\end{align*}
By Equation \eqref{eq:calcul} and Figure \ref{fig:proba_contour34}, the second term is equal to:
\begin{align*}
 -\frac{1}{4\pi^2}\oint_{\C_4}f_{z_k}(\lambda)&f_{v_k}(-\lambda)\log(\lambda) \ud\lambda=\\
&=\frac{i}{4\pi}\left(
\log_{\C_4}(e^{i\alpha})-\log_{\C_4}(-e^{i\alpha})-
\frac{\log_{\C_4}(e^{i\alpha})-\log_{\C_4}(-e^{i\beta})}{\cos\left(\frac{\beta-\alpha}{2}\right)}\right)\\
&=\frac{i}{4\pi}\left(i\alpha-i(\alpha+\pi)-\frac{i\alpha-i(\beta-\pi)}{\cos\theta}\right)\\
&=\frac{1}{4}+\frac{\pi-2\theta}{4\pi\cos\theta}.
\end{align*}

As a consequence $\displaystyle\P(v_k v_\l)=\frac{1}{2}+\frac{\pi-2\theta}{2\pi\cos\theta}.$

Let us make a few simple comments about the values of these probabilities.

\begin{enumerate}
  \item The value $\frac{1}{2}$ for the probability of the edge $w_k z_{k+1}$ can be explained as follows. By Fisher's correspondence, once the configuration of the edges coming from edges of $G$ attached to the decoration of $\xb$ is fixed, there are two possibilities for the dimer covering inside the decoration, which both have the same weight. There is always one of the two possibilities containing the edge $w_k z_{k+1}$. Therefore, this edge appears in a random dimer configuration half of the time.
  \item Notice that $\P(w_k z_k)= \frac{1}{2}\P(v_k v_\l)$. This is explained by the fact that the edge $w_k z_k$ appears only if $v_k v_\l$ is not present in the dimer configuration, and it appears only in one of the two allowed configurations, once the state of the edges coming from edges of $G$ is fixed.
  \item Using the two previous points, and the fact that the probability of the edges incident to a given vertex must sum to $1$, one can deduce the probability of all the edges from the probability of the edge $v_k v_\l$.
\end{enumerate}

\bibliographystyle{alpha}
\bibliography{ising}

\end{document}